\documentclass{amsart}
\usepackage{amsmath,amssymb,amsthm,mathrsfs}

\theoremstyle{plain}
\newtheorem{theorem}{Theorem}[section]
\newtheorem{lemma}[theorem]{Lemma}
\newtheorem{corollary}[theorem]{Corollary}
\newtheorem{proposition}[theorem]{Proposition}

\theoremstyle{definition}
\newtheorem{remark}[theorem]{Remark}

\numberwithin{equation}{section}


\newcommand{\BC}{{\mathbb C}}\newcommand{\BD}{{\mathbb D}}

\newcommand{\BR}{{\mathbb R}}
\newcommand{\BT}{{\mathbb T}}

\newcommand{\BZ}{{\mathbb Z}}

\newcommand{\cD}{{\mathcal D}}

\newcommand{\cH}{{\mathcal H}}

\newcommand{\cK}{{\mathcal K}}\newcommand{\cL}{{\mathcal L}}

\newcommand{\cS}{{\mathcal S}}\newcommand{\cT}{{\mathcal T}}
\newcommand{\cU}{{\mathcal U}}
\newcommand{\cX}{{\mathcal X}}
\newcommand{\cY}{{\mathcal Y}}\newcommand{\cZ}{{\mathcal Z}}
\newcommand{\bA}{{\mathbf A}}\newcommand{\bB}{{\mathbf B}}
\newcommand{\bC}{{\mathbf C}}\newcommand{\bD}{{\mathbf D}}

\newcommand{\bW}{{\mathbf W}}



\newcommand{\fH}{{\mathfrak H}}

\newcommand{\fL}{{\mathfrak L}}

\newcommand{\fT}{{\mathfrak T}}




\newcommand{\ep}{\epsilon}

\newcommand{\la}{\lambda}

\newcommand{\Si}{\Sigma}



\newcommand{\im}{\textup{Im\,}}

\newcommand{\kr}{\textup{Ker\,}}

\newcommand{\spec}{r_\textup{spec}}

\newcommand{\mat}[2]{\ensuremath{\left[\begin{array}{#1}#2\end{array}\right]}}

\newcommand{\sbm}[1]{\left[\begin{smallmatrix}#1\end{smallmatrix}\right]}

\newcommand{\tu}[1]{\textup{#1}}

\newcommand{\half}{\frac{1}{2}}
\newcommand{\ands}{\quad\mbox{and}\quad}


\newcommand{\wtil}{\widetilde}

\newcommand{\Obs}{\operatorname{Obs}\,}
\newcommand{\Rea}{\operatorname{Rea}\,}

\newcommand{\bu}{{\mathbf u}}
\newcommand{\bv}{{\mathbf v}}
\newcommand{\bx}{{\mathbf x}}
\newcommand{\by}{{\mathbf y}}



\newcommand{\bcU}{{\boldsymbol{\mathcal U}}}
\newcommand{\frakH}{{\mathfrak H}}
\newcommand{\frakL}{{\mathfrak L}}
\newcommand{\frakT}{{\mathfrak T}}
\newcommand{\oX}{\overline{X}}

\newcommand{\uS}{\underline{S}}

\begin{document}

\title[Infinite Dimensional Bounded Real Lemma II]{Standard
versus Bounded Real Lemma
with infinite-dimensional state space II:\\ The storage function approach}

\author{J.A. Ball}
\address{J.A. Ball, Department of Mathematics, Virginia Tech,
Blacksburg, VA 24061-0123, USA}
\email{joball@math.vt.edu}

\author{G.J. Groenewald}
\address{G.J. Groenewald, Department of Mathematics, Unit for BMI,
North-West University, Potchefstroom 2531, South Africa}
\email{Gilbert.Groenewald@nwu.ac.za}

\author{S. ter Horst}
\address{S. ter Horst, Department of Mathematics, Unit for BMI,
North-West University, Potchefstroom 2531, South Africa}
\email{Sanne.TerHorst@nwu.ac.za}

\thanks{This work is based on the research supported in part by the National
Research Foundation of South Africa (Grant Numbers 93039, 90670, and 93406).}

\begin{abstract}
For discrete-time causal linear input/state/output systems, the Bounded Real Lemma explains
(under suitable hypotheses) the contractivity of the values of the transfer function over the unit disk
for such a system in terms of the existence of a positive-definite solution of a certain Linear Matrix Inequality (the
Kalman-Yakubovich-Popov (KYP) inequality).  Recent work has extended this result to the setting of
infinite-dimensional state space and associated non-rationality of the transfer function, where at least in some cases unbounded solutions of the generalized KYP-inequality are required.  This paper is the second installment in a series of papers on the Bounded Real Lemma and the KYP inequality. We adapt Willems' storage-function approach to the infinite-dimensional linear setting, and in this way reprove various results presented in the first installment, where they were obtained as applications of infinite-dimensional State-Space-Similarity theorems, rather than via explicit computation of storage functions.
\end{abstract}

\subjclass[2010]{Primary 47A63; Secondary 47A48, 93B20, 93C55, 47A56}

\keywords{KYP inequality, storage function, bounded real lemma, infinite
dimensional linear system, minimal system.}

\maketitle


\section{Introduction}\label{S:intro}

This paper is the second installment, following \cite{KYP1}, on the infinite dimensional bounded real lemma for discrete-time systems and the discrete-time Kalman-Yakubovich-Popov (KYP) inequality. In this context, we consider the discrete-time linear system
\begin{equation}\label{dtsystem}
\Si:=\left\{
\begin{array}{ccc}
\bx(n+1)&=&A \bx(n)+B \bu(n),\\
\by(n)&=&C \bx(n)+D \bu(n),
\end{array}
\right. \qquad (n\in\BZ)
\end{equation}
where $A:\cX\to\cX$, $B:\cU\to\cX$, $C:\cX\to\cY$ and $D:\cU\to\cY$
are bounded linear Hilbert space operators, i.e., $\cX$, $\cU$ and
$\cY$ are Hilbert spaces and the {\em system matrix} associated with
$\Si$ takes the form
\begin{equation}\label{sysmat}
M=\mat{cc}{A&B\\ C& D}:\mat{cc}{\cX\\ \cU}\to\mat{c}{\cX\\ \cY}.
\end{equation}
We refer to the pair $(C,A)$ as the {\em output pair} and to the pair
$(A,B)$ as the {\em input pair}. In this case input sequences
$\bu=(\bu(n))_{n\in\BZ}$, with $\bu(n)\in\cU$, are mapped to output
sequences $\by=(\by(n))_{n\in\BZ}$, with $\by(n)\in\cY$, through the
state sequence $\bx=(\bx(n))_{n\in\BZ}$, with $\bx(n)\in \cX$. A system trajectory of the system $\Si$ is then any triple $(\bu(n),\bx(n),\by(n))_{n\in\BZ}$ of input, state and output sequences that satisfy the system equations \eqref{dtsystem}.

With
the system $\Si$ we associate the {\em transfer function} given by
\begin{equation}\label{trans}
F_\Si(\lambda)=D+\lambda C(I-\lambda A)^{-1}B.
\end{equation}
Since $A$ is bounded, $F_\Si$ is defined and analytic on a
neighborhood of $0$ in $\BC$. We are interested in the case
where $F_\Si$ admits an analytic continuation to the open unit disk
$\BD$ such that the supremum norm $\|F_\Si\|_\infty$ of $F_\Si$ over
$\BD$ is at most one, i.e., $F_\Si$ has analytic continuation to a
function in the Schur class
\[
  \cS(\cU, \cY) = \left\{ F \colon {\mathbb D}
  \underset{\text{holo}}\mapsto \cL(\cU, \cY) \colon \| F(\lambda) \| \le 1
  \text{ for all } z \in {\mathbb D}\right\}.
\]

Sometimes we also consider system trajectories $(\bu(n),\bx(n),\by(n))_{n\ge n_0}$
of the system $\Si$ that are initiated at a certain time $n_0\in\BZ$, in which case the input, state and output at time $n<n_0$ are set equal to zero, and we only require that the system equations \eqref{dtsystem} are satisfied for $n\geq n_0$. Although technically such trajectories are not system trajectories for $\Si$, but rather correspond to trajectories of the corresponding singly-infinite forward-time system rather than the bi-infinite system $\Si$, the transfer function of this singly-infinite forward-time system coincides with the transfer function $F_\Si$ of $\Si$. Hence for the sake of the objective, determining whether $F_\Si\in \cS(\cU,\cY)$, there is no problem with considering such singly infinite system trajectories.

Before turning to the infinite-dimensional setting, we first discuss the case where $\cU$, $\cX$, $\cY$ are all finite-dimensional. If in this case one considers the parallel situation in continuous time rather than in
discrete time, these ideas have origins in circuit theory, specifically conservative or passive circuits.  An important question in this context is to identify which rational matrix functions, analytic on the left half-plane (rather than the unit disk $\BD$), arise from a lossless or dissipative circuit in this way (see e.g. Belevitch \cite{Bel}).

According to Willems \cite{Wil72a, Wil72b}, a linear system $\Sigma$ as in  \eqref{dtsystem}
is {\em dissipative}  (with respect to {\em supply rate} $s(u,y)  = \| u \|^2 - \| y \|^2$) if it has a
{\em storage function} $S \colon  \cX \to {\mathbb R}_+$, where $S(x)$ is to be interpreted as a measure of
the {\em energy} stored by the system when it is in state $x$.
Such a storage function $S$ is assumed to satisfy the dissipation inequality
\begin{equation}   \label{diss}
  S(\bx(n+1)) - S(\bx(n)) \le \|\bu(n) \|^2 - \| \by(n) \|^2
\end{equation}
over all trajectories $(\bu(n), \bx(n), \by(n))_{n\in\BZ}$ of the system $\Sigma$ as well as the additional normalization condition that $S(0) = 0$. The dissipation inequality can be interpreted as saying that for the given system trajectory, the energy stored in the system ($S(\bx(n+1)) - S(\bx(n))$) when going from state $x(n)$ to $x(n+1)$ can be no more than the difference between the energy that enters the system ($\|\bu(n) \|^2$) and the energy that leaves the system ($\| \by(n) \|^2$) at time $n$.

For our discussion here we shall only be concerned with the so-called {\em scattering supply rate} $s(u,y)
= \| u \|^2 - \| y \|^2$.
 It is not hard to see that a consequence
of the dissipation inequality \eqref{diss} on system trajectories is that the transfer function $F_\Sigma$ is in the
Schur class $\cS(\cU, \cY)$.  The results extend to nonlinear systems as well
(see \cite{Wil72a}), where one talks about the system having $L^2$-gain at most $1$ rather the system
having transfer function in the Schur class.

In case the system $\Sigma$ is finite-dimensional and minimal (as defined in the statement of
Theorem \ref{T:BRLfinstan} below), one can show that the smallest storage function, the {\em available storage} $S_a$, and the largest storage function, the {\em required supply} $S_r$, are {\em quadratic}, provided storage functions for $\Si$ exist. That $S_a$ and $S_r$ are quadratic means that there are positive-definite matrices $H_a$ and $H_r$ so that $S_a$ and $S_r$ have the quadratic form
$$
  S_a(x) = \langle H_a x, x \rangle, \quad S_r(x) = \langle H_r x, x \rangle
$$
with $H_a$ and $H_r$ actually being positive-definite.
For a general quadratic storage function $S_H(x) = \langle H x, x \rangle$ for a positive-definite matrix $H$,
it is not hard to see that the dissipation inequality \eqref{diss} assumes the form of a linear matrix inequality (LMI):
\begin{equation}  \label{KYP1}
\begin{bmatrix} A & B \\ C & D \end{bmatrix}^{*}
\begin{bmatrix} H &  0 \\ 0 & I_{\cY} \end{bmatrix}
\begin{bmatrix} A & B \\ C & D\end{bmatrix}
\preceq \begin{bmatrix} H & 0 \\ 0 & I_{\cU}\end{bmatrix}.
\end{equation}
This is what we shall call the {\em Kalman-Yakubovich-Popov} or KYP  inequality (with solution $H$
for given system matrix $M = \sbm{ A & B \\ C & D}$).

Conversely, if one starts with a finite-dimensional, minimal, linear system $\Si$ as in \eqref{dtsystem} for which the transfer function
$F_\Sigma$ is in the Schur-class, it is possible to show that there exist
quadratic storage functions $S_H$  for the system satisfying the coercivity condition
$S_H(x) \ge \delta \| x \|^2$ for some $\delta > 0$
(i.e., with $H$ strictly positive-definite). This is the storage-function interpretation behind the following
result, known as the {\em Kalman-Yakubovich-Popov lemma}.

\begin{theorem}[Standard Bounded  Real Lemma (see \cite{AV})]
\label{T:BRLfinstan}
Let $\Si$ be a discrete-time linear system as in
\eqref{dtsystem} with $\cX$, $\cU$ and $\cY$ finite dimensional, say
$\cU = {\mathbb C}^{r}$, $\cY = {\mathbb C}^{s}$, $\cX = {\mathbb
C}^{n}$, so that the system matrix $M$ has the form
\begin{equation}\label{findimsys}
M = \begin{bmatrix} A & B \\ C & D \end{bmatrix} \colon
 \begin{bmatrix} {\mathbb C}^{n} \\ {\mathbb C}^{r} \end{bmatrix} \to
     \begin{bmatrix} {\mathbb C}^{n} \\ {\mathbb C}^{s} \end{bmatrix}
\end{equation}
and the transfer function  $F_{\Sigma}$ is equal to a
rational matrix function of size $s \times r$. Assume that the
realization $(A,B,C,D)$ is {\em minimal}, i.e., the output pair
$(C,A)$ is {\em observable} and the input pair $(A,B)$ is {\em
controllable}:
\begin{equation}\label{obscontr}
 \bigcap_{k=0}^{n} \kr C A^{k} = \{0\}\ands
\textup{span}_{k=0,1,\dots, n-1} \im A^{k} B  = \cX = {\mathbb C}^{n}.
\end{equation}
Then $F_{\Sigma}$ is in the Schur class $\cS({\mathbb C}^{r},
{\mathbb C}^{s})$
if and only if
there is an $n \times n$ positive-definite matrix $H$ satisfying the
KYP-inequality \eqref{KYP1}.
\end{theorem}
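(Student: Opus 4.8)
The plan is to prove the two implications separately, the reverse direction being essentially a direct summation and the forward direction requiring Willems' available storage together with the minimality hypothesis.

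For the implication that a positive-definite solution $H$ of \eqref{KYP1} forces $F_\Si\in\cS(\BC^r,\BC^s)$, I would first use the observation already recorded in the text: \eqref{KYP1} says exactly that the quadratic form $S_H(x)=\langle Hx,x\rangle$ satisfies the dissipation inequality \eqref{diss} along every system trajectory. Running a trajectory forward from the rest condition $\bx(0)=0$ with an arbitrary square-summable input supported on $n\ge0$ and summing \eqref{diss} from $0$ to $N$ telescopes to $0\le S_H(\bx(N+1))\le\sum_{n=0}^N(\|\bu(n)\|^2-\|\by(n)\|^2)$, using $S_H\ge0$ and $S_H(0)=0$. Letting $N\to\infty$ shows the input/output map is a contraction on $\ell^2(\BZ_+)$, which is equivalent to $\|F_\Si\|_\infty\le1$. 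This direction uses neither minimality nor finite dimensionality.

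For the converse I would introduce the available storage
\[
S_a(x_0)=\sup\Big\{\textstyle\sum_{n=0}^{N-1}\big(\|\by(n)\|^2-\|\bu(n)\|^2\big):N\ge0,\ (\bu,\bx,\by)\text{ a trajectory with }\bx(0)=x_0\Big\},
\]
which clearly satisfies $S_a\ge0$. First I would check $S_a(x_0)<\infty$: by controllability there is a finite input steering the state from $0$ at some time $-k$ to $x_0$ at time $0$, at some fixed finite supply cost $c(x_0)$; concatenating it with any future trajectory out of $x_0$ produces a trajectory initiated at rest, whose total supply is $\ge0$ by the contractivity of the (Schur-class) input/output map, and this bounds the extractable energy of the future part by $c(x_0)$.

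The technical heart is to show $S_a$ is a quadratic form. For fixed horizon $N$ the functional $\sum_{n=0}^{N-1}(\|\by(n)\|^2-\|\bu(n)\|^2)$ is a \emph{homogeneous} quadratic form in $(x_0,\bu(0),\dots,\bu(N-1))$, with some Hermitian block Hessian $\sbm{P_{11}&P_{12}\\P_{12}^*&P_{22}}$; finiteness of the supremum forces $P_{22}\preceq0$, and maximizing out the input yields the (generalized) Schur complement $H_N=P_{11}-P_{12}P_{22}^\dagger P_{12}^*\succeq0$, so $S_a^N(x_0)=\langle H_Nx_0,x_0\rangle$. Appending zero inputs shows $S_a^N$ is non-decreasing in $N$, hence the $H_N$ form a monotone non-decreasing sequence of positive-semidefinite matrices, bounded above pointwise by the finite $S_a$; in finite dimensions such a sequence converges to some $H_a\succeq0$ with $S_a(x)=\langle H_ax,x\rangle$. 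I expect this step to be the main obstacle: exact quadraticity (rather than mere convexity) of the supremum, the handling of a possibly singular $P_{22}$ via a pseudo-inverse, and the matrix monotone convergence are precisely where finite dimensionality is indispensable. Finally I would use observability to upgrade $H_a\succeq0$ to $H_a\succ0$: if $S_a(x_0)=0$, then testing against the zero-input trajectory gives $\sum_n\|CA^nx_0\|^2=0$, whence $CA^nx_0=0$ for all $n$ and so $x_0=0$. A one-step dynamic-programming argument—prepend a single transition $(x,u)\mapsto(Ax+Bu,\,Cx+Du)$ to a near-optimal future trajectory out of $Ax+Bu$—gives $S_a(Ax+Bu)-S_a(x)\le\|u\|^2-\|Cx+Du\|^2$ for all $(x,u)$, i.e.\ $S_a$ is a storage function; written out for the quadratic form $H_a$ this is exactly the KYP inequality \eqref{KYP1}, completing the proof.
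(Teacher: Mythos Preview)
Your proof is correct. The paper does not give a self-contained proof of Theorem~\ref{T:BRLfinstan} (it is cited from \cite{AV}), but its storage-function machinery, developed for the infinite-dimensional Theorem~\ref{T:BRLinfstan}, specializes to furnish one; the comparison is with that machinery.

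For sufficiency you do exactly what the paper does (Proposition~\ref{P:storage-Schur} via Lemma~\ref{L:finH2}). For necessity you and the paper both use Willems' available storage $S_a$, show it is a storage function by a one-step dynamic-programming argument (Proposition~\ref{P:SaSr}), and deduce positive-definiteness of the resulting $H_a$ from observability by testing against the zero-input trajectory (this is the content of $\ker\bW_o=\{0\}$). The genuine divergence is in how you establish that $S_a$ is a \emph{quadratic} form. You work horizon by horizon: each $S_a^N$ is the partial maximum of an indefinite quadratic form, hence a Schur complement $H_N$; the $H_N$ increase and are bounded by $S_a<\infty$, so matrix monotone convergence gives $H_a$. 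The paper instead passes immediately to the infinite-horizon operator formula $S_a(x_0)=\sup_{\bu\in\ell^2}\|\bW_o x_0+\frakT_{F_\Si}\bu\|^2-\|\bu\|^2$ (Lemma~\ref{L:SaSrOpForm}), uses the Douglas lemma to factor $\bW_o=D_{\frakT_{F_\Si}^*}X_a$ (Lemma~\ref{L:fact}), and then computes the supremum in closed form to obtain $S_a(x_0)=\|X_ax_0\|^2$ (Theorem~\ref{T:Sar}). Your route is more elementary and self-contained in finite dimensions (no Douglas lemma, no Toeplitz/Hankel apparatus), and the Schur-complement step makes the quadratic optimization completely explicit; its cost is that the matrix monotone-convergence argument does not survive the passage to infinite-dimensional state space, which is exactly why the paper builds the heavier operator-theoretic framework.
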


There is also a {\em strict} version of the Bounded Real Lemma.  The associated storage function required
is a {\em strict storage function}, i.e., a function $S \colon \cX \to {\mathbb R}_+$ for which there is a number $\delta > 0$
so that
\begin{equation}   \label{diss-strict}
  S(\bx(n+1)) -  S(\bx(n))  +  \delta \| x(n) \|^2 \le (1- \delta) \| \bu(n)\|^2 - \| \by(n) \|^2
\end{equation}
holds over all system trajectories $(\bu(n), \bx(n), \by(n))_{n\in\BZ}$, in addition to the normalization condition $S(0)=0$.
If $S_H(x) = \langle H x, x \rangle$ is a quadratic strict storage function, then the associated linear matrix inequality
is the {\em strict KYP-inequality}
\begin{equation}   \label{KYP2}
\begin{bmatrix} A & B \\ C & D \end{bmatrix}^{*}
\begin{bmatrix} H &  0 \\ 0 & I_{\cY} \end{bmatrix}
\begin{bmatrix} A & B \\ C & D\end{bmatrix}
\prec \begin{bmatrix} H & 0 \\ 0 & I_{\cU}\end{bmatrix}.
\end{equation}
In this case, one also arrives at a stronger condition on the transfer function $F_\Si$, namely that it has an analytic continuation to a function in the
{\em strict Schur class}:
\[
  \cS^{o}(\cU, \cY) =\left \{ F \colon {\mathbb D}
\underset{\text{holo}}
  \mapsto \cL(\cU, \cY) \colon \sup_{z \in {\mathbb D}} \| F(z) \|
\le \rho \text{ for some }
  \rho < 1\right\}.
\]
Note, however, that the strict KYP-inequality implies that $A$ is stable, so that in case \eqref{KYP2} holds, $F_\Si$ is in fact analytic on $\BD$. This is the storage-function interpretation of the following strict Bounded Real Lemma, in which one replaces the minimality condition with a stability condition.

\begin{theorem}[Strict Bounded  Real Lemma (see
\cite{PAJ})]\label{T:BRLfinstrict}
Suppose that the dis\-crete-time linear system $\Si$  is as in
\eqref{dtsystem} with  $\cX$, $\cU$ and $\cY$ finite dimensional, say
$\cU = {\mathbb C}^{r}$, $\cY = {\mathbb C}^{s}$, $\cX = {\mathbb
C}^{n}$, i.e., the system matrix $M$ is as in \eqref{findimsys}.
Assume that $A$ is {\em stable}, i.e., all eigenvalues of $A$ are
inside the open unit disk $\BD$, so that $\spec(A) < 1$ and the
transfer function $F_{\Si}(z)$ is analytic on a neighborhood of
$\overline{\BD}$.  Then
$F_{\Si}(z)$ is in the strict Schur class $\cS^{o}({\mathbb C}^{r},
{\mathbb C}^{s})$ if and only if there is a positive-definite matrix
$H \in {\mathbb C}^{n \times n}$ so that the strict KYP-inequality
\eqref{KYP2} holds.
\end{theorem}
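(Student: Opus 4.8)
The plan is to prove both implications by the storage-function method: sufficiency of the strict KYP inequality by a direct frequency-domain substitution, and necessity by constructing an explicit strict storage function of available-storage type. For the direction ``strict KYP $\Rightarrow$ strict Schur,'' suppose $H\succ0$ satisfies \eqref{KYP2}. In finite dimensions strictness yields a $\delta>0$ with $M^{*}\sbm{H&0\\0&I}M\preceq\sbm{H&0\\0&I}-\delta I$. Fix $z$ with $0<|z|\le1$; since $A$ is stable, $I-zA$ is invertible, so I may set $x=z(I-zA)^{-1}Bu$, for which $Ax+Bu=z^{-1}x$ and $Cx+Du=F_\Si(z)u$. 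Substituting $\sbm{x\\u}$ gives
\[
|z|^{-2}\langle Hx,x\rangle+\|F_\Si(z)u\|^2\le\langle Hx,x\rangle+\|u\|^2-\delta\big(\|x\|^2+\|u\|^2\big).
\]
For $|z|\le1$ the term $(|z|^{-2}-1)\langle Hx,x\rangle\ge0$, whence $\|F_\Si(z)u\|^2\le(1-\delta)\|u\|^2$; thus $\|F_\Si\|_\infty\le\sqrt{1-\delta}<1$ and $F_\Si\in\cS^{o}(\cU,\cY)$, the value at $z=0$ being covered by $x=0$.

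For the converse, assume $A$ stable and $\|F_\Si\|_\infty\le\rho<1$. Since minimality is \emph{not} available here, neither the available storage nor the required supply for the plain scattering rate need be coercive; instead I would build the available storage for the \emph{strict} supply rate. Choosing $\delta>0$ small, define over forward trajectories of $\Si$ with $\bx(0)=x_0$
\[
S(x_0)=\sup\Big\{\textstyle\sum_{n\ge0}\big(\|\by(n)\|^2-(1-\delta)\|\bu(n)\|^2+\delta\|\bx(n)\|^2\big)\Big\}.
\]
The first step is to show $S(x_0)\le C\|x_0\|^2$ and $S(0)=0$. Decompose each trajectory into a free part (input $0$, state $A^nx_0$) and a forced part (zero initial state). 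Stability of $A$ makes $\bu\mapsto(\bx,\by)$ bounded on $\ell^2$ and makes the free contribution $\sum_{n\ge0}(\|CA^nx_0\|^2+\delta\|A^nx_0\|^2)$ finite; the identity $\|T_F\|_{\ell^2}=\|F_\Si\|_\infty\le\rho$ (Parseval) bounds the forced contribution by $(\rho^2-1+\delta(1+c))\|\bu\|_{\ell^2}^2\le0$ once $\delta$ is small. The strict gap $\rho<1$ then leaves room to absorb the free/forced cross terms via Cauchy--Schwarz, giving the quadratic upper bound and $S(0)=0$.

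Next I would verify that $S$ is a quadratic form $S(x_0)=\langle Hx_0,x_0\rangle$ with $H\succ0$. Each finite-horizon value is the maximum over the input of a quadratic functional whose $\bu$-Hessian is negative-definite (its leading coefficient is $-(1-\delta)$), hence is a quadratic form in $x_0$; these value functions increase with the horizon and are bounded above, so their matrices converge and the limit $S$ is quadratic. Coercivity is where the strict rate pays off without observability: taking $\bu\equiv0$ yields $S(x_0)\ge\delta\sum_{n\ge0}\|A^nx_0\|^2\ge\delta\|x_0\|^2$, so $H\succeq\delta I\succ0$. Finally the one-step dynamic-programming inequality for $S$ reads $S(\bx(n+1))-S(\bx(n))\le(1-\delta)\|\bu(n)\|^2-\|\by(n)\|^2-\delta\|\bx(n)\|^2$, i.e.\ \eqref{diss-strict}; written out for $S_H$ this is exactly $M^*\sbm{H&0\\0&I}M\preceq\sbm{H-\delta I&0\\0&(1-\delta)I}\prec\sbm{H&0\\0&I}$, the strict KYP inequality \eqref{KYP2}.

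The main obstacle is the necessity direction, and within it the two technical pillars that both hinge on the \emph{strict} bound $\rho<1$ together with stability of $A$: proving that $S$ is finite (the uniform quadratic upper bound, including control of the cross terms) and that $S$ is genuinely a quadratic form rather than merely convex. Coercivity and the strictness of the resulting LMI are then essentially free, arising from the $\delta\|\bx\|^2$ penalty built into the supply rate --- the very device that lets the argument dispense with the minimality hypothesis used in Theorem \ref{T:BRLfinstan}.
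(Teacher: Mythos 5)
Your proof is essentially correct, but it follows a genuinely different route from the one this paper takes. Here Theorem \ref{T:BRLfinstrict} is attributed to \cite{PAJ} and is handled (in its infinite-dimensional form, Theorem \ref{T:BRLinfstrict}) by the \emph{$\epsilon$-regularization reduction}: one augments the system matrix with $\epsilon I_{\cX}$ blocks as in \eqref{breal} so that the enlarged system is exactly (hence $\ell^2$-exactly) minimal with transfer function still in the strict Schur class, invokes the \emph{standard} Bounded Real Lemma to get a bounded, strictly positive-definite solution of the non-strict KYP inequality \eqref{KYP1} for the augmented system, and then deletes the extra rows and columns to recover the strict inequality \eqref{KYP2} for the original system; the sufficiency direction is run through the storage-function propositions (a quadratic strict storage function forces $\|F_\Sigma\|_\infty<1$). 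You instead prove sufficiency by the direct frequency-domain substitution $x=z(I-zA)^{-1}Bu$, which is clean and correct, and prove necessity by constructing the available storage for the \emph{strict} supply rate $(1-\delta)\|u\|^2-\|y\|^2-\delta\|x\|^2$, using stability plus the gap $\rho<1$ to get the uniform quadratic upper bound and the negative-definite $\bu$-Hessian, and using the $\delta\|x\|^2$ penalty to get coercivity without any minimality hypothesis. What each approach buys: yours is self-contained and makes transparent exactly where strictness and stability enter (finiteness, quadraticity, coercivity), whereas the paper's reduction recycles the already-proved standard BRL and, more importantly for this paper's purposes, transfers verbatim to the infinite-dimensional setting where your monotone-matrix-limit and Schur-complement steps would need extra care. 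Two small bookkeeping points in your write-up: the supremum defining $S$ should be taken over finite horizons $n_1$ (as in \eqref{Sa2}) so that the objective is always well defined, with the uniform bound then passing to the limit; and the negative-definiteness of the $\bu$-Hessian is not just the ``leading coefficient $-(1-\delta)$'' but the full estimate $\|T_{F}\|^2-(1-\delta)+\delta\|G\|^2\le \rho^2-1+\delta(1+c)<0$ that your upper-bound computation already supplies. Neither affects correctness.
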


We now turn to the general case, where  the state space $\cX$ and the input space $\cU$ and output space $\cY$ are allowed to be infinite-dimensional. In this case, the results are
more recent, depending on the precise hypotheses.

For generalizations of Theorem \ref{T:BRLfinstan}, much depends on what is meant by minimality of $\Si$, and hence by the corresponding notions of controllable and
observable.  Here are the three possibilities for controllability of an input pair $(A,B)$ which we shall consider.
The third notion involves the controllability operator $\bW_c$ associated with the pair $(A,B)$ tailored to the Hilbert space setup which in general is a closed, possibly unbounded operator with domain $\cD(\bW_c)$ dense in $\cX$ mapping into the Hilbert space $\ell^2_\cU({\mathbb Z}_-)$ of $\cY$-valued sequences supported on the negative integers ${\mathbb Z}_- =\{ -1, -2, -3, \dots \}$, as well as the observability operator $\bW_o$ associated with the pair $(C,A)$, which has similar properties. We postpone precise definitions and properties of these operators to Section \ref{S:review}.

For an input pair $(A,B)$ we define the following notions of controllability:
\begin{itemize}
\item  $(A,B)$ is {\em (approximately) controllable} if the reachability space
\begin{equation}  \label{ReachSpace}
\Rea(A|B) = \operatorname{span}\{\im A^k B \colon k=0,1,2,\dots\}
\end{equation}
 is dense in $\cX$.
\item $(A,B)$ is {\em exactly controllable} if the reachability space $\Rea(A|B)$ is equal
to $\cX$,  i.e., each state vector $x \in \cX$ has a representation as a finite linear combination
$x = \sum_{k=0}^K  A^k B u_k$ for a choice of finitely many input vectors $u_0, u_1, \dots, u_K$
(also known as every $x$ is a {\em finite-time reachable state} (see \cite[Definition 3.3]{OpmeerStaffans2008}).

\item $(A,B)$ is {\em $\ell^2$-exactly controllable} if the $\ell^2$-adapted controllability operator $\bW_c$
has  range equal to all of $\cX$:  $ \bW_c\, \cD(\bW_c) = \cX$.
\end{itemize}
If $(C,A)$ is an output pair, we have the dual notions of observability:
\begin{itemize}
\item $(C,A)$ is {\em (approximately) observable} if the input pair $(A^*, C^*)$ is (approximately) controllable, i.e.,
if the observability space
\begin{equation}   \label{ObsSpace}
  \Obs(C|A) = \operatorname{span} \{ \im A^{*k} C^*  \colon k=0,1,2,\dots\}
\end{equation}
is dense in $\cX$, or equivalently, if $\cap_{k=0}^\infty \ker C A^k = \{0\}$.

\item  $(C,A)$ is {\em exactly observable} if the observability subspace $\Obs(C|A)$ is the whole space $\cX$.

\item $(C,A)$ is {\em $\ell^2$-exactly observable} if the adjoint input pair $(A^*, C^*)$ is
$\ell^2$-exactly controllable,
i.e.,  if the adjoint $\bW_o^*$ of the $\ell^2$-adapted observability operator $\bW_o$ has full range:
$\bW_o^*\, \cD(\bW_o^*) = \cX$.
\end{itemize}
Then we say that the system $\Sigma \sim (A,B,C,D)$ is
\begin{itemize}
\item {\em minimal} if $(A,B)$ is controllable and $(C,A)$ is observable,
\item {\em exactly minimal} if both $(A,B)$ is exactly controllable and $(C,A)$ is exactly observable, and
\item {\em $\ell^2$-exactly minimal} if both $(A,B)$ is $\ell^2$-exactly controllable and $(C,A)$ is $\ell^2$-exactly
observable.
\end{itemize}

Despite the fact that the operators $A$, $B$, $C$ and $D$ associated with the system $\Si$ are all bounded, in the infinite dimensional analogue of the KYP-inequality \eqref{KYP1} unbounded solutions $H$ may appear. We therefore have to be more precise concerning the notion of positive-definiteness we employ. Suppose that $H$ is a (possibly unbounded) selfadjoint operator $H$ on a Hilbert space $\cX$ with domain $\cD(H)$ dense in $\cX$; we refer to \cite{RS} for background and details on this class and other classes of unbounded operators. Then we shall say:
\begin{itemize}
  \item $H$ is {\em strictly positive-definite} (written $H \succ 0$) if there is a $\delta > 0$ so that
$\langle Hx, x \rangle \ge \delta \| x \|^2$ for all $x \in \cD(H)$;

  \item $H$ is {\em positive-definite} if $\langle H x, x \rangle > 0$ for all nonzero $x \in \cD(H)$;

  \item $H$ is {\em positive-semidefinite} (written $H \succeq 0$) if $\langle H x, x \rangle \ge0$ for
all $x \in \cD(H)$.

\end{itemize}
We also note that any (possibly unbounded) positive-semidefinite  operator $H$ has a positive-semidefinite square root $H^\half$; as
$H = H^\half \cdot H^\half$, we have
$$
  \cD(H) = \{ x \in \cD(H^\half) \colon H^\half x \in \cD(H^\half) \} \subset \cD(H^\half).
 $$
 See e.g.\  \cite{RS} for details.

Since solutions $H$ to the corresponding KYP-inequality may be unbounded, the KYP-inequality cannot necessarily be written in the LMI form \eqref{KYP1}, but rather, we require a spatial form of \eqref{KYP1} on the appropriate domain: For a (possibly unbounded) positive-definite operator $H$ on $\cX$ satisfying
\begin{equation} \label{KYP1b'}
A \cD(H^{\half}) \subset \cD(H^{\half}), \quad B \cU \subset
\cD(H^{\half}),
\end{equation}
the spatial form of the KYP inequality takes the form:
\begin{equation}\label{KYP1b}
\left\| \begin{bmatrix} H^{\half} \! & \! 0 \\ 0 \! & \! I_{\cU}
\end{bmatrix} \begin{bmatrix} x \\ u \end{bmatrix} \right\|^{2}
- \left\| \begin{bmatrix} H^{\half} \! & \! 0 \\ 0 \! & \! I_{\cY}
\end{bmatrix} \begin{bmatrix} A\! & \! B \\ C \! & \! D \end{bmatrix}
\begin{bmatrix} x \\ u \end{bmatrix} \right\|^{2} \ge 0 \ \
(x \in \cD(H^{\half}),\, u \in \cU).
\end{equation}
The corresponding notion of a storage function will then be allowed to assume $+\infty$ as a value; this will be made precise in Section \ref{S:Storage}.

With all these definitions out of the way, we can state the following three distinct generalizations of Theorem
\ref{T:BRLfinstan} to the infinite-dimensional situation.

\begin{theorem}[Infinite-dimensional standard Bounded Real Lemma]
\label{T:BRLinfstan}
Let $\Si$ be a discrete-time linear system as in
\eqref{dtsystem} with system matrix $M$ as in \eqref{sysmat} and
transfer function $F_\Si$ defined by \eqref{trans}.
\begin{enumerate}
\item[(1)] Suppose that the system $\Si$ is minimal, i.e., the input pair
$(A,B)$ is controllable and the output pair $(C,A)$ is observable.
Then the transfer function $F_{\Sigma}$ has an analytic continuation
to a function in the Schur class $\cS(\cU, \cY)$ if and only if there
exists a positive-definite solution $H$ of the KYP-inequality in the following generalized sense:  $H$
is a closed, possibly unbounded, densely defined,  positive-definite (and hence injective) operator on $\cX$ such that $\cD(H^\half)$ satisfies \eqref{KYP1b'} and $H$ solves the spatial KYP-inequality \eqref{KYP1b}.

\item[(2)] Suppose that $\Sigma$ is exactly minimal.  Then the transfer function
$F_{\Sigma}$ has an
analytic continuation to a function in the Schur class $\cS(\cU,
\cY)$ if and only if there exists a bounded, strictly positive-definite
solution $H$ of the KYP-inequality \eqref{KYP1}. In this case $A$ has a spectral radius of at most one, and hence $F_{\Sigma}$ is in fact analytic on $\BD$.

\item[(3)]  Statement {\rm(}2{\rm)} above continues to hold if the ``exactly minimal'' hypothesis is replaced
by the hypothesis that $\Sigma$ be ``$\ell^2$-exactly minimal.''

\end{enumerate}
\end{theorem}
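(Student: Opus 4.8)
The plan is to follow the route already used for statement~(2), replacing its finite-time reachability and observability estimates by their $\ell^2$ counterparts. Throughout I would work with the available storage $S_a$ and the required supply $S_r$ of the storage-function framework: these are the smallest and largest (normalized) storage functions, they are quadratic, $S_a(x)=\langle H_a x,x\rangle$ and $S_r(x)=\langle H_r x,x\rangle$, any solution $H$ of the KYP inequality satisfies $H_a\preceq H\preceq H_r$, and $S_a(x)\le S_r(x)$ for every $x\in\cX$. The key point will be that $\ell^2$-exact observability forces $H_a$ to be bounded below while $\ell^2$-exact controllability forces $H_r$ to be bounded above; pinching $H_a$ between these two bounds then yields a bounded, strictly positive-definite solution.

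I would first dispatch the easy (``if'') direction, which needs no minimality. Suppose $H\succ 0$ is bounded and solves \eqref{KYP1}. Evaluating that inequality along any system trajectory gives the dissipation inequality \eqref{diss} for $S_H(x)=\langle Hx,x\rangle$; summing telescopically over $n\ge n_0$ for a trajectory initiated from rest yields $\sum\|\by(n)\|^2\le\sum\|\bu(n)\|^2$, so the input-output map is an $\ell^2$-contraction and $F_\Sigma\in\cS(\cU,\cY)$. Moreover \eqref{KYP1} forces $A^*HA\preceq H$, so $A$ is a contraction in the $H$-inner product; since $\delta I\preceq H\preceq\|H\|I$ this inner product is equivalent to the original one, $\{A^n\}$ is uniformly bounded, and hence $\spec(A)\le 1$ with $F_\Sigma$ analytic on $\BD$.

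For the (``only if'') direction I would assume $F_\Sigma\in\cS(\cU,\cY)$ and that $\Sigma$ is $\ell^2$-exactly minimal. Since $\ell^2$-exact minimality implies (approximate) minimality, the storage-function machinery and part~(1) apply, so $S_a,S_r$ are genuine quadratic storage functions solving the spatial KYP inequality. For the lower bound, zero input from state $x$ produces output energy $\|\bW_o x\|^2$, so $S_a(x)\ge\|\bW_o x\|^2$; and $\ell^2$-exact observability says $\bW_o^*$ is surjective, which for the closed densely defined operator $\bW_o$ is equivalent to $\bW_o$ being bounded below, $\|\bW_o x\|^2\ge\delta\|x\|^2$. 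Hence $H_a\succeq\delta I$. For the upper bound, any input $\bu$ driving rest to $x$ has supply $\|\bu\|^2-\|\by\|^2\le\|\bu\|^2$, so $S_r(x)\le\inf\{\|\bu\|^2:\bW_c\bu=x\}$; $\ell^2$-exact controllability says $\bW_c$ is surjective, equivalently $\bW_c^*$ is bounded below, so $\bW_c\bW_c^*\succeq c^2 I$ is boundedly invertible and the minimal control energy is $\langle(\bW_c\bW_c^*)^{-1}x,x\rangle\le c^{-2}\|x\|^2$. Thus $H_r\preceq c^{-2}I$, and with $H_a\preceq H_r$ we obtain $\delta I\preceq H_a\preceq H_r\preceq c^{-2}I$. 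In particular $H_a$ extends to a bounded, strictly positive-definite operator, and being bounded it solves the LMI form \eqref{KYP1}; taking $H=H_a$ finishes the proof, with the spectral-radius bound coming from the easy direction.

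I expect the main obstacle to be the two closed-range equivalences ``$T^*$ surjective $\Leftrightarrow$ $T$ bounded below'' for the generally unbounded operators $\bW_o$ and $\bW_c$, together with the attendant domain bookkeeping: one must justify that $S_a$ and $S_r$ really are quadratic with self-adjoint generators $H_a,H_r$ (the substantive input inherited from the storage-function analysis and part~(1)), and that the chain of quadratic-form inequalities $\delta I\preceq H_a\preceq H_r\preceq c^{-2}I$ on a common core genuinely upgrades $H_a$ to a bounded operator with bounded inverse. Unlike statement~(2), where exact finite-time reachability permits estimates over finitely many steps, here the energies are genuinely infinite-horizon, and it is the closed-range theorem for unbounded operators that converts the $\ell^2$-exactness hypotheses into the required coercivity bounds on the Gramians.
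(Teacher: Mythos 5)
Your sufficiency argument and your mechanism for part (3) are essentially sound, and within part (3) your route is a legitimate variant of the paper's: where you extract explicit coercivity constants via the closed-range theorem (a sandwich of the form $\bW_o^*\bW_o \preceq H_a \preceq H_r \preceq c^{-2}I$), the paper records the same inequalities $\|\bW_o x_0\|^2 \le S_a(x_0) \le \uS_r(x_0) \le \|\bu_-\|^2$ (Corollary \ref{C:boundedSauSr}) but then obtains boundedness and bounded invertibility of $H_a$ and $H_r$ from the domain inclusions $\im \bW_c \subset \cD(H_a^{\half})\cap\cD(H_r^{\half})$ and $\im \bW_o^* \subset \cD(H_a^{-\half})\cap\cD(H_r^{-\half})$ together with the Closed Graph Theorem, which sidesteps the constant-chasing. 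Under the hypotheses of (3) your coercivity estimates do go through (since $\im\bW_c=\cX$ forces $S_a=S_{H_a}$ everywhere), so this half of your plan is workable.

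The genuine gap is that the theorem has three parts and your proposal proves only the third. All of the hard content of part (1) --- that a merely minimal system with Schur-class transfer function admits a (possibly unbounded) positive-definite solution of the generalized KYP-inequality --- is invoked as an input (``the storage-function machinery and part (1) apply, so $S_a,S_r$ are genuine quadratic storage functions''), not proven. Establishing quadraticity is precisely where the work lies: one needs the Douglas-factorization step producing closable operators $X_a, X_r$ with $\bW_o|_{\im \bW_c} = D_{\fT_{F_\Sigma}^*}X_a$ and $\bW_c^*|_{\im\bW_o^*} = D_{\widetilde\fT_{F_\Sigma}}X_r$, injectivity of their closures, the definitions $H_a = \oX_a^*\oX_a$ and $H_r = (\oX_r^*\oX_r)^{-1}$, and a core argument to pass from the identity $S_a = S_{H_a}$ on $\im\bW_c$ (which is all one gets --- the paper explicitly does not claim $S_a(x)=\|H_a^{\half}x\|^2$ for every $x\in\cX$) to the generalized KYP-inequality on all of $\cD(H_a^{\half})$. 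Two of your blanket setup claims are also false as stated: the required supply $S_r$ of \eqref{Sr2} is $+\infty$ off $\Rea(A|B)$ and is not quadratic in general (only its $\ell^2$-regularization $\uS_r$ is), and the ordering $H_a \le H \le H_r$ for arbitrary solutions $H$ is only established under core hypotheses on $\cD(H^{\half})$ and $\cD(H^{-\half})$ (Theorem \ref{T:order}); neither claim is needed for the conclusion, but they should not be asserted. Finally, part (2) is untouched: the paper handles it by showing that exact minimality plus the Schur-class hypothesis forces $\bW_c$ and $\bW_o^*$ to be bounded and surjective (Closed Graph Theorem again), so that exact minimality already implies $\ell^2$-exact minimality and (2) reduces to (3); some such reduction, or an independent argument, is missing from your plan.
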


We shall refer to a closed, densely defined, positive-definite solution $H$ of \eqref{KYP1b'}--\eqref{KYP1b} as a positive-definite solution of the {\em generalized KYP-inequality}.

The paper of Arov-Kaashoek-Pik \cite{AKP06} gives a penetrating treatment of item (1) in Theorem \ref{T:BRLinfstan}, including examples to illustrate various subtleties surrounding this result---e.g., the fact that the result can fail if one insists on classical bounded  and boundedly invertible selfadjoint solutions of the KYP-inequality.  We believe that items (2) and (3) appeared for the first time  in \cite{KYP1}, where also a sketch of the proof of item (1) is given. The idea behind the proofs of items (1)--(3) in \cite{KYP1} is to combine the result that a Schur-class function  $S$ always has
a contractive realization (i.e., such an $S$ can be realized as $S = F_\Sigma$ for a system $\Sigma$ as in \eqref{dtsystem} with system matrix $M$ in \eqref{sysmat} a contraction operator) with variations of the State-Space-Similarity Theorem (see \cite[Theorem 1.5]{KYP1}) for the infinite-dimensional situation under the conditions that hold in items (1)--(3); roughly speaking, under appropriate hypothesis, a State-Space-Similarity Theorem says that two systems $\Si$ and $\Si'$ whose transfer functions coincide on a neighborhood of zero,  necessarily can be transformed (in an appropriate sense) from one to other via a change
of state-space coordinates.

In the present paper we revisit these three results from a different point of view: we adapt Willems' variational formulas to the infinite dimensional setting, and in this context present the available storage $S_a$ and required supply $S_r$, as well as an $\ell_2$-regularized version $\uS_r$ of the required supply. It is shown, under appropriate hypothesis, that these are storage functions, with $S_a$ and $\uS_r$ being quadratic storage functions, i.e., $S_a$ agrees with $S_{H_a}(x)=\|H_a^\half x\|^2$ and
$\uS_r(x)=S_{H_r}(x)=\|H_r^\half x \|^2$ for $x$ in a suitably large subspace of $\cX$, where $H_a$ and $H_r$ are
possibly unbounded, positive-definite density operators, which turn out to be positive-definite solutions to the generalized KYP-inequality. In this way we will arrive at a proof of item (1). Further analysis of the behavior of $H_a$ and $H_r$, under additional restrictions on $\Si$, lead to proofs of items (2) and (3), as well as the following version of the strict Bounded Real Lemma for infinite dimensional systems, which is a much more straightforward generalization of the result in the finite-dimensional case (Theorem \ref{T:BRLfinstrict}).

\begin{theorem}[Infinite-dimensional strict Bounded Real Lemma]
\label{T:BRLinfstrict}
Let $\Si$ be a dis\-crete-time linear system as in
\eqref{dtsystem} with system matrix $M$ as in \eqref{sysmat} and
transfer function $F_\Si$ defined by \eqref{trans}. Assume that $A$
is exponentially stable, i.e., $\spec(A) < 1$.  Then the transfer
function $F_{\Sigma}$ is in the strict Schur class $\cS^{o}(\cU,
\cY)$ if and only if there exists a bounded strictly positive-definite
solution $H$ of the  strict KYP-inequality \eqref{KYP2}.
\end{theorem}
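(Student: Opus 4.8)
The plan is to prove the two implications separately, the forward direction (strict KYP $\Rightarrow$ strict Schur) being routine and the converse requiring the explicit construction of a bounded, boundedly invertible solution through a \emph{state-penalized available storage}.

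For the easy direction, suppose $H\succ 0$ is bounded and solves \eqref{KYP2}. Since $H$ and $M$ are bounded, strictness means there is $\delta>0$ with $M^*\sbm{H&0\\0&I_\cY}M \preceq \sbm{H&0\\0&I_\cU}-\delta I$. Pairing this with $\sbm{x\\u}$ and setting $\bx(n+1)=Ax+Bu$, $\by(n)=Cx+Du$, $\bx(n)=x$, $\bu(n)=u$ yields the strict dissipation inequality $\langle H\bx(n+1),\bx(n+1)\rangle-\langle H\bx(n),\bx(n)\rangle+\delta\|\bx(n)\|^2\le(1-\delta)\|\bu(n)\|^2-\|\by(n)\|^2$. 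Summing over a forward trajectory with $\bx(0)=0$ and discarding the nonnegative terms $\langle H\bx(N),\bx(N)\rangle$ and $\delta\sum\|\bx(n)\|^2$ gives $\sum_{n\ge0}\|\by(n)\|^2\le(1-\delta)\sum_{n\ge0}\|\bu(n)\|^2$. As $\spec(A)<1$, the input-output map is the bounded lower-triangular Toeplitz operator with symbol $F_\Sigma$, whose operator norm equals $\sup_{\BD}\|F_\Sigma\|$; the bound therefore gives $\sup_{\BD}\|F_\Sigma\|\le\sqrt{1-\delta}<1$, i.e.\ $F_\Sigma\in\cS^o(\cU,\cY)$.

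For the converse, assume $\sup_{\BD}\|F_\Sigma\|\le\rho<1$. Using $\spec(A)<1$ I would introduce three bounded operators attached to forward trajectories started at time $0$: the observability map $\cO\colon x_0\mapsto(CA^n x_0)_{n\ge0}$, the free-state map $\cR\colon x_0\mapsto(A^n x_0)_{n\ge0}$, and the input-to-state convolution $\cK$, all bounded because $\|A^n\|$ decays geometrically. Writing $\by=\cO x_0+T\bu$ and $\bx=\cR x_0+\cK\bu$ with $T$ the input-output operator ($\|T\|=\sup_{\BD}\|F_\Sigma\|\le\rho$), I define the state-penalized available storage $\underline{S}_a(x_0)=\sup_{\bu}\bigl[\|\cO x_0+T\bu\|^2+\delta\|\cR x_0+\cK\bu\|^2-(1-\delta)\|\bu\|^2\bigr]$ over $\ell^2$ inputs. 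The quadratic part in $\bu$ is $Q=T^*T+\delta\cK^*\cK-(1-\delta)I$; choosing $\delta$ with $\delta(1+\|\cK\|^2)<1-\rho^2$ forces $Q\preceq-\eta I$ for some $\eta>0$, so the functional is uniformly concave, its supremum finite and attained. Completing the square then yields $\underline{S}_a(x_0)=\langle H_a x_0,x_0\rangle$ with $H_a=R+L^*(-Q)^{-1}L$, where $R=\cO^*\cO+\delta\cR^*\cR$ and $L=T^*\cO+\delta\cK^*\cR$; boundedness of $H_a$ is immediate since $-Q\succeq\eta I$ is boundedly invertible. Strict positivity comes for free from the state penalty: the $n=0$ term of $\cR$ gives $\cR^*\cR\succeq I$, hence $H_a\succeq R\succeq\delta\cR^*\cR\succeq\delta I$, so $H_a$ is bounded and strictly positive-definite. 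Finally, the one-step dynamic-programming inequality $\underline{S}_a(x_0)\ge\|Cx_0+Du_0\|^2+\delta\|x_0\|^2-(1-\delta)\|u_0\|^2+\underline{S}_a(Ax_0+Bu_0)$, valid for every $u_0$, rearranges into the strict dissipation inequality for $S_{H_a}$, which for bounded $H_a$ is exactly the strict KYP-inequality \eqref{KYP2} with gap $\delta I$.

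\textbf{Main obstacle.} I expect the difficulty to lie entirely in the converse, and within it in two quantitative points: first, identifying the operator norm of the input-output map with $\sup_{\BD}\|F_\Sigma\|$, so that the strict Schur bound $\rho<1$ can be fed into the coercivity estimate; and second, calibrating $\delta$ against both $\rho$ and $\|\cK\|$ so that $-Q$ is uniformly positive and the infinite-dimensional completion of the square is legitimate. Everything else—finiteness of the supremum, the quadratic-form identity, and the passage from finite-horizon sums to the $\ell^2$ formulation—is then routine, and the decisive structural gain over the non-strict lemma is that the state penalty forces $H_a\succeq\delta I$ \emph{without any controllability or observability hypothesis}, consistent with the theorem assuming only exponential stability.
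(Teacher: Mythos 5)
Your proof is correct, but the converse direction follows a genuinely different route from the paper's. The paper handles sufficiency by an $\epsilon$-regularization reduction: it augments the system matrix to $M_\epsilon$ as in \eqref{breal}, observes that the augmented system is exactly (hence $\ell^2$-exactly) minimal because $\bB$ and $\bC^*$ are surjective and that $F_\epsilon$ is still a strict Schur function for small $\epsilon$, invokes items (2)/(3) of Theorem \ref{T:BRLinfstan} (themselves proved via the Hankel factorizations, the Douglas lemma, and the unbounded operators $\oX_a$, $\oX_r$) to get a bounded, boundedly invertible solution of the augmented KYP-inequality \eqref{KYP-epsilon}, and then deletes the third row and column to land on \eqref{KYP2}. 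You instead build the solution directly: your state-penalized available storage is a uniformly concave quadratic functional in $\bu$ once $\delta(1+\|\cK\|^2)<1-\rho^2$, so an elementary completion of squares in $\ell^2$ yields a bounded $H_a=R+L^*(-Q)^{-1}L$, the $n=0$ term of $\cR$ gives $H_a\succeq\delta I$ for free, and the one-step dynamic-programming inequality (sup over inputs with prescribed $\bu(0)=u_0$ dominated by the unrestricted sup) delivers the strict dissipation inequality, hence \eqref{KYP2}. It is worth noting that your $\uS_a$ is essentially the available storage of the paper's augmented system $\Sigma_\epsilon$ with $\delta=\epsilon^2$ and the supply rate read off from \eqref{Fepsilon} -- the extra input channel $\epsilon I_\cX$ in \eqref{breal} exists only to force exact controllability so that the standard Bounded Real Lemma applies, and your argument shows that channel is dispensable. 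What the paper's route buys is uniformity with the rest of its storage-function framework and the exhibition of the solution as an instance of the general $H_a$/$H_r$ construction; what yours buys is a self-contained, entirely bounded-operator argument that never touches minimality, cores, or closures, and makes transparent exactly where exponential stability (boundedness of $\cO$, $\cR$, $\cK$) and the strict Schur bound (uniform concavity) enter. The two quantitative points you flag as the main obstacles -- $\|T\|=\sup_{\BD}\|F_\Sigma\|$ for the analytic block Toeplitz operator, and the calibration of $\delta$ against $\rho$ and $\|\cK\|$ -- are indeed the only places requiring care, and both go through as you describe.
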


Theorem \ref{T:BRLfinstrict} was proved by Petersen-Anderson-Jonkheere \cite{PAJ}  for the con\-tinuous-time
finite-dimensional setting by using what we shall call an $\epsilon$-regulariza\-tion procedure to reduce the result to the
standard case Theorem \ref{T:BRLfinstan}.  In \cite{KYP1} we show how this same idea can be used in the infinite-dimensional
setting to reduce the hard direction of Theorem \ref{T:BRLinfstrict} to the result of either of item (2) or item (3) in
Theorem \ref{T:BRLinfstan}.
For the more general nonlinear setting, Willems \cite{Wil72a} was primarily interested in what storage functions look like assuming
that they exist, while in \cite{Wil72b} for the finite-dimensional linear setting he reduced the existence problem to the existence
theory for Riccati matrix equations.  Here we solve the existence problem for the more general infinite-dimensional linear setting
by converting Willems' variational formulation of the available storage $S_a$ and an $\ell^2$-regularized version $\uS_r$ of his
required supply $S_r$ to an operator-theoretic formulation amenable to explicit analysis.

This paper presents a more unified approach to the different variations of the Bounded Real Lemma, in the sense that we present
a pair of concretely defined, unbounded, positive-definite operators $H_a$ and $H_r$ that, under the appropriate conditions,
form positive-definite solutions to the generalized KYP-inequality, and that have the required additional features under the
additional conditions in items (2) and (3) of Theorem \ref{T:BRLinfstan} as well as Theorem \ref{T:BRLinfstrict}.
We also make substantial use of connections with corresponding objects for the adjoint system $\Sigma^*$ (see
\eqref{dtsystem*}) to complete the analysis and arrive at  some order properties for the set of all solutions of the generalized
KYP-inequality which are complementary to those in \cite{AKP06}.

 The paper is organized as follows. Besides the current introduction, the paper consists of seven sections. In Section \ref{S:review}
 we give the definitions of the observability operator $\bW_o$ and controllability operator $\bW_c$ associated with the system $\Si$
 in \eqref{dtsystem} and recall some of their basic properties. In Section \ref{S:Storage} we define what is meant by a storage function
 in the context of infinite dimensional discrete-time linear systems $\Si$ of the form \eqref{dtsystem} as well as  strict and quadratic
 storage functions and we clarify the relations between quadratic (strict) storage functions and solutions to the (generalized)
 KYP-inequality. Section \ref{S:ASRS} is devoted to the available storage $S_a$ and required supply $S_r$, two examples of
 storage functions, in case the transfer function of $\Si$ has an analytic continuation to a Schur class function. It is shown that $S_a$
 and an $\ell^2$-regularized version $\uS_r$ of $S_r$ in fact agree with quadratic storage functions on suitably
 large domain via explicit constructions of two closed, densely defined, positive-definite operators $H_a$ and $H_r$
 that exhibit $S_a$  and $\uS_r$ as quadratic storage functions $S_{H_a}$ and $S_{H_r}$.  In Section \ref{S:dual} we
 make explicit the theory for the adjoint system $\Sigma^*$ and the duality connections between $\Sigma$ and $\Sigma^*$.
 In Section \ref{S:order} we study the order properties of a class of solutions of the generalized KYP-inequality, and
 obtain the conditions under which $H_a$ and $H_r$ are bounded and/or boundedly invertible and thereby
 solutions of the classical KYP-inequality.  These results are then used in Section \ref{S:BRLproof} to give
 proofs of Theorems \ref{T:BRLinfstan} and \ref{T:BRLinfstrict} via the storage function approach.

\section{Review: minimality, controllability, observability}   \label{S:review}

In this section we recall the definitions of the observability operator $\bW_o$ and controllability operator $\bW_c$ associated with the
discrete-time linear system $\Si$ given by \eqref{dtsystem} and various of their basic properties which will be needed in the
sequel. Detailed proofs of most of these results as well as additional properties can be found in \cite[Section 2]{KYP1}.

For the case of a general system $\Sigma$, following
\cite[Section 2]{KYP1}, we define the {\em observability operator} $\bW_{o}$ associated with $\Si$ to be the possibly
unbounded operator with domain $\cD(\bW_{o})$ in $\cX$ given by
\begin{equation} \label{bWo1}
\cD(\bW_{o}) = \{ x \in \cX \colon \{ C A^{n} x\}_{n \ge 0}
\in\ell^{2}_{\cY}({\mathbb Z}_{+})\}
\end{equation}
with action given by
\begin{equation}   \label{bWo2}
\bW_{o} x =  \{ C A^{n}  x\}_{n \ge 0} \text{ for } x \in
\cD(\bW_{o}).
\end{equation}
Dually, we define the {\em adjoint controllability operator} $\bW_{c}^{*}$ associated with $\Si$ to have domain
\begin{equation}   \label{bWc*1}
\cD(\bW_{c}^{*}) = \{ x \in \cX \colon \{B^* A^{*(-n-1)} x\}_{n\le
-1} \in\ell^{2}_{\cU}({\mathbb Z}_{-})\}
\end{equation}
with action given by
\begin{equation}   \label{bWc*2}
\bW_{c}^{*} x = \{B^* A^{*(-n-1)} x\}_{n\le -1} \text{ for } x \in
\cD(\bW_{c}^{*}).
\end{equation}
It is directly clear from the definitions of $\bW_o$ and $\bW_c^*$ that
\begin{equation}\label{KerWoWc}
\ker \bW_o=\Obs(C|A)^\perp\ands
\ker \bW_c^*=\Rea(A|B)^\perp.
\end{equation}

We next summarize the basic properties of $\bW_c$ and $\bW_o$.

\begin{proposition}[Proposition 2.1 in \cite{KYP1}]  \label{P:WcWo'}
Let $\Sigma$ be a system as in \eqref{dtsystem} with observability operator $\bW_o$ and adjoint controllability
operator $\bW_c^*$ as in \eqref{bWo1}--\eqref{bWc*2}.  Basic properties of the controllability operator $\bW_c$ are:
\begin{enumerate}
\item[(1)] It is always the case that $\bW_o$ is a closed operator on its domain \eqref{bWo1}.

\item[(2)] If $\cD(\bW_o)$ is dense in $\cX$, then
the adjoint $\bW_o^*$ of $\bW_o$ is a closed and
densely defined operator, by a general property of adjoints of closed operators with
dense domain.  Concretely for the case here,
$\cD(\bW_o^*)$ contains the dense linear manifold
$\ell_{\tu{fin},\cY}(\BZ_+)$ consisting of  finitely supported sequences in
$\ell^2_\cY(\BZ_+)$.  In general, one can characterize $\cD(\bW_{o}^{*})$ explicitly
as the set of all $\by \in \ell^{2}_{\cY}({\mathbb Z}_{+})$ such that
there exists a vector $x_{o} \in \cX$ such that the limit
$$
\lim_{K \to \infty}\langle  x, \sum_{k=0}^{K} A^{*k} C^{*} \by(k)
 \rangle_{\cX}
 $$
  exists for each $x \in \cD(\bW_o)$ and is given  by
\begin{equation}   \label{limit-o}
 \lim_{K \to \infty}\langle  x, \sum_{k=0}^{K} A^{*k} C^{*} \by(k)
 \rangle_{\cX} = \langle x, x_{o} \rangle_{\cX},
\end{equation}
with action of $\bW_c$ then given by
\begin{equation}   \label{Wo*act}
    \bW_{o}^{*} \by = x_{o}
 \end{equation}
 where $x_{o}$ is as in \eqref{limit-o}.
  In particular, $\ell_{\tu{fin}, \cY}({\mathbb Z}_+)$ is contained in $\cD(\bW_o^*)$ and the observability
space defined in \eqref{ObsSpace} is given by
$$
\Obs (C|A) = \bW_{o}^{*} \ell_{\tu{fin}, \cY}({\mathbb Z}_{+}).
$$
Thus, if in addition $(C,A)$ is observable, then $\bW_o^*$ has dense range.
\end{enumerate}
Dual properties of the controllability operator $\bW_c^*$ are:
\begin{enumerate}
\item[(3)] It is always the case that the adjoint controllability operator $\bW_c^*$ is closed on its domain \eqref{bWc*1}.

\item[(4)]  If $\cD(\bW_c^*)$ is dense in $\cX$,  then
the controllability operator $\bW_c = (\bW_c^*)^*$ is closed and densely defined by a general property of the adjoint
of a closed and densely defined operator.
Concretely for the case here,  $\cD(\bW_c)$ contains the dense  linear manifold
$\ell_{\tu{fin},\cU}(\BZ_-)$ of finitely supported sequences in
$\ell^2_\cU(\BZ_-)$.  In general,  one can characterize $\cD(\bW_{c})$ explicitly as
the set of all $\bu \in \ell^{2}_{\cU}({\mathbb Z}_{-})$ such that
there exists a vector $x_{c} \in \cX$ so that
$$ \lim_{K \to \infty} \langle x, \sum_{k=-K}^{-1} A^{-k-1} B \bu(k)
    \rangle_{\cX}
 $$
 exists for each $x \in \cD(\bW_{c}^{*})$ and is given by
\begin{equation}  \label{limit-c}
\lim_{K \to \infty} \langle x, \sum_{k=-K}^{-1} A^{-k-1} B \bu(k)
    \rangle_{\cX} = \langle x, x_{c} \rangle_{\cX},
\end{equation}
and  action of $\bW_{c}$ then given by
\begin{equation}  \label{Wc-act}
    \bW_{c} \bu = x_{c}
\end{equation}
where $x_{c}$ is as in \eqref{limit-c}.
 In particular, the reachability space
$\Rea (A|B)$ is equal to $\bW_{c} \ell_{{\rm fin}, \cU}({\mathbb Z}_{-})$.
Thus, if in addition $(A,B)$ is controllable, then $\bW_c$ has dense range.
\end{enumerate}
\end{proposition}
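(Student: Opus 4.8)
The plan is to establish the four items in two groups: prove (1) and (2) directly for the observability operator $\bW_o$, and then obtain the dual statements (3) and (4) for free by recognizing that the adjoint controllability operator $\bW_c^*$ of $\Si$ is, after the reindexing $m=-n-1$, exactly the observability operator of the output pair $(B^*,A^*)$ associated with the adjoint system. Indeed, $\bW_c^* x=\{B^*A^{*(-n-1)}x\}_{n\le-1}$ becomes $\{B^*(A^*)^m x\}_{m\ge 0}$, which is $\bW_o$ for $(B^*,A^*)$; moreover $\Obs(B^*|A^*)=\operatorname{span}\{\im A^kB\}=\Rea(A|B)$, so items (3)--(4) follow verbatim from (1)--(2) applied to the adjoint system, with the roles of \eqref{ObsSpace} and \eqref{ReachSpace} interchanged. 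No separate argument is then needed.

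For item (1) I would verify closedness directly from \eqref{bWo1}--\eqref{bWo2}. Suppose $x_m\to x$ in $\cX$ and $\bW_o x_m\to\by$ in $\ell^2_\cY(\BZ_+)$. Since each coordinate map $x\mapsto CA^n x$ is bounded (as $A$ and $C$ are bounded), $(\bW_o x_m)(n)=CA^n x_m\to CA^n x$ for every $n$; on the other hand $\ell^2$-convergence forces $(\bW_o x_m)(n)\to\by(n)$ coordinatewise. Hence $\by(n)=CA^n x$ for all $n$, so $\{CA^n x\}_{n\ge0}=\by\in\ell^2_\cY(\BZ_+)$, which says precisely that $x\in\cD(\bW_o)$ and $\bW_o x=\by$. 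This is the standard ``column operator'' closedness argument.

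For item (2), once $\cD(\bW_o)$ is dense the general theory of adjoints of closed densely defined operators (see \cite{RS}) immediately gives that $\bW_o^*=(\bW_o)^*$ is closed and densely defined with $(\bW_o^*)^*=\bW_o$. The concrete content is then to identify a dense submanifold of $\cD(\bW_o^*)$ and to compute the action. For finitely supported $\by\in\ell_{\tu{fin},\cY}(\BZ_+)$ and any $x\in\cD(\bW_o)$ one has the finite-sum identity $\langle\bW_o x,\by\rangle=\sum_n\langle CA^n x,\by(n)\rangle=\langle x,\sum_n A^{*n}C^*\by(n)\rangle$, so $x\mapsto\langle\bW_o x,\by\rangle$ is bounded and $\by\in\cD(\bW_o^*)$ with $\bW_o^*\by=\sum_n A^{*n}C^*\by(n)$. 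Unravelling the definition of the adjoint for a general $\by$ yields the limit characterization \eqref{limit-o}--\eqref{Wo*act}. Finally, as $\by$ runs over $\ell_{\tu{fin},\cY}(\BZ_+)$ the vectors $\sum_n A^{*n}C^*\by(n)$ run over exactly $\operatorname{span}\{\im A^{*n}C^*\colon n\ge0\}=\Obs(C|A)$, giving $\Obs(C|A)=\bW_o^*\ell_{\tu{fin},\cY}(\BZ_+)$; if $(C,A)$ is observable this span is dense, whence $\ran\bW_o^*$ is dense.

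The one point requiring genuine care is the domain characterization in item (2): one must resist concluding that $\bW_o^*\by$ is the $\cX$-limit of the partial sums $\sum_{k=0}^K A^{*k}C^*\by(k)$. For $\by$ outside $\ell_{\tu{fin},\cY}(\BZ_+)$ these partial sums need not converge in norm; membership of $\by$ in $\cD(\bW_o^*)$ is precisely the condition that the limit tested against every $x\in\cD(\bW_o)$ exist and be represented by a single vector $x_o\in\cX$, as written in \eqref{limit-o}. Keeping the convergence weak (against $\cD(\bW_o)$) rather than strong is exactly what the definition of the adjoint of an unbounded operator demands, and this is the only delicate step in the argument.
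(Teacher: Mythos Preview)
Your proof is correct. Note, however, that this paper does not itself prove the proposition: it is stated as a recall of Proposition~2.1 from the companion paper \cite{KYP1}, with the remark in Section~\ref{S:review} that detailed proofs are to be found there. So there is no in-paper proof to compare against.

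That said, your strategy is exactly the one the present paper implicitly endorses. The reduction of items (3)--(4) to items (1)--(2) via the reindexing $m=-n-1$ is precisely the identification $\bW_{*o}=\bW_c^*$, $\bW_{*c}=\bW_o^*$ made explicit later in Section~\ref{S:dual} (see \eqref{identifications}); so your duality shortcut is not only valid but is the intended structural point of view. Your closedness argument for (1) is the standard column-operator argument, and your handling of (2) is correct, including the caution you flag at the end: for $\by\in\ell^2_\cY(\BZ_+)$ and $x\in\cD(\bW_o)$ the limit $\lim_K\langle x,\sum_{k=0}^K A^{*k}C^*\by(k)\rangle$ always exists (it equals $\langle\bW_o x,\by\rangle_{\ell^2}$), so the genuine content of \eqref{limit-o} is the existence of a single $x_o\in\cX$ representing this functional on $\cD(\bW_o)$---which, as you say, is exactly the definition of $\by\in\cD(\bW_o^*)$. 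One small addendum worth making explicit: density of $\cD(\bW_o)$ alone gives that $\bW_o^*$ is closed, but to conclude $\bW_o^*$ is \emph{densely defined} you are also using the closedness of $\bW_o$ from item~(1) (a densely defined operator has densely defined adjoint if and only if it is closable). You invoke this correctly, but it is worth stating the dependence on (1) rather than only on the density hypothesis.
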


For systems $\Si$ as in \eqref{dtsystem}, without additional conditions, it can happen that $\bW_o$ and/or $\bW_c^*$ are not densely defined, and therefore
the adjoints $\bW_o^*$ and $\bW_c$ are at best linear relations and difficult to work with.  However, our interest here
is the case where the transfer function $F_\Sigma$ has analytic continuation to a  bounded function on the unit disk (or even in the Schur class, i.e., norm-bounded by $1$ on the unit disk).  In this case the multiplication operator
\begin{equation}  \label{mult-op}
  M_{F_\Sigma} \colon f(\lambda) \mapsto F_\Sigma(\lambda) f(\lambda)
\end{equation}
is a bounded operator from $L^2_\cU({\mathbb T})$ to $L^2_\cY({\mathbb T})$
and hence also its compression to a map ``from past to future''
\begin{equation}  \label{freq-Hankel}
  {\mathbb H}_{F_\Sigma} = P_{H^2_\cY({\mathbb D})} M_{F_\Sigma}|_{H^2_\cU({\mathbb D})^\perp},
\end{equation}
often called the {\em Hankel operator} with symbol $F_\Sigma$, is also bounded (by $\| M_{F_\Sigma} \|$).
If we take inverse $Z$-transform to represent $L^2({\mathbb T})$ as $\ell^2({\mathbb Z})$, $H^2({\mathbb D})$ as
$\ell^2({\mathbb Z}_+)$ and $H^2({\mathbb D})^\perp$ as $\ell^2({\mathbb Z}_-)$, then the frequency-domain Hankel
operator
$$
{\mathbb H}_{F_\Sigma} \colon H^2_\cU({\mathbb D})^\perp \to H^2_\cY({\mathbb D})
$$
given by \eqref{freq-Hankel}  transforms via inverse $Z$-transform to the time-domain Hankel operator $\fH_{F_\Sigma}$
with matrix representation
\begin{equation}  \label{Hankel-matrix}
  \fH_{F_\Sigma} = [ C A^{i-j-1} B ]_{i \ge 0, j<0} \colon \ell^2_\cU({\mathbb Z}_-) \to \ell^2_\cY({\mathbb Z}_+).
\end{equation}
We conclude that the Hankel matrix $\fH_{F_\Sigma}$ is bounded as an operator from $\ell^2_\cU({\mathbb Z}_-)$ to
$\ell^2_\cY({\mathbb Z}_+)$ whenever $F_\Sigma$ has analytic continuation to an $H^\infty$ function.
From the matrix representation \eqref{Hankel-matrix} we see that the Hankel matrix  formally has a factorization
\begin{equation} \label{formal-Hank-fact}
 \fH_{F_\Sigma} = \tu{col} [C A^i]_{i \ge 0} \cdot \tu{row} [A^{-j-1} B]_{j<0} = \bW_o \cdot \bW_c.
\end{equation}
It can happen that $\fH_{F_\Sigma}$ is bounded while $\bW_o$ and $\bW_c$ are unbounded.
Nevertheless, from the fact that $\fH_{F_\Sigma}$ is bounded one can see that $\Rea (A|B)$ is in $\cD(\bW_o)$
and
$$
 \fH_{F_\Sigma} \bu = \bW_o \left( \sum_{k=K}^{-1} A^{-1-k} B \bu(k) \right) \in \ell^2_\cY({\mathbb Z}_+).
$$
for each finitely supported input string $\bu(K), \dots, \bu(-1)$.  If we assume that $(A,B)$ is controllable,
we conclude that $\bW_o$ is densely defined.  Similarly, by working with boundedness of $\fH_{F_\Sigma}^*$
one can show that boundedness of $F_\Sigma$ on ${\mathbb D}$ leads to $\cD(\bW_c^*)$ containing the
observability space $\Obs(C|A)$; hence if we assume that $(C,A)$ is observable, we get that $\bW_c^*$ is
densely defined.    With these observations in hand, the following precise version of the formal factorization
 \eqref{formal-Hank-fact} for the case where $\bW_o$ and $\bW_c$ may be unbounded becomes plausible.

 \begin{proposition}[Corollary 2.4 and Proposition 2.6 in \cite{KYP1}]  \label{P:HankelDecs}
 Suppose that the system $\Sigma$ given by \eqref{dtsystem} has transfer function $F_\Sigma$ with analytic continuation
 to an $H^\infty$-function on the unit disk ${\mathbb D}$.
 \begin{enumerate}
    \item[(1)]
    Assume that $\cD(\bW_c^*)$ is dense in $\cX$ {\rm(}as is the case if $(C,A)$ is observable{\rm)}.  Then
    $\cD(\bW_{o})$ contains $\im \bW_{c} = \bW_{c} \cD(\bW_{c})$ and
\begin{equation}  \label{HankDec1}
\fH_{F_{\Sigma}}|_{\cD(\bW_{c})} = \bW_{o} \bW_{c}.
\end{equation}
In particular, as $\ell_{{\rm fin}, \cU}({\mathbb Z}_-) \subset \cD(\bW_c)$ and $\bW_c
\ell_{{\rm fin}, \cU}({\mathbb Z}_-) = \Rea(A|B)$ {\rm(}from Proposition \ref{P:WcWo'} {\rm(4))}, it follows that $\Rea(A|B) \subset \cD(\bW_o)$.

\item[(2)]
Assume that $\cD(\bW_o)$ is dense in $\cX$ {\rm(}as is the case if $(A,B)$ is controllable{\rm)}.  Then
$\cD(\bW_{c}^{*})$ contains $\im \bW_{o}^{*} = \bW_{o}^{*}\cD(\bW_{o}^{*})$ and
\begin{equation}   \label{HankDec2}
\fH_{F_{\Sigma}}^{*}|_{\cD(\bW_{o}^{*})} = \bW_{c}^{*} \bW_{o}^{*}.
\end{equation}
In particular, as $\ell_{{\rm fin}, \cY}({\mathbb Z}) \subset \cD(\bW_o^*)$ and
$\bW_o^* \ell_{{\rm fin}, \cY}({\mathbb Z}_+)
= \Obs(C|A)$ {\rm(}from Proposition \ref{P:WcWo'} {\rm(2))}, it follows that $\Obs(C|A) \subset \cD(\bW_c^*)$.

\item[(3)]  In case the system matrix $M = \sbm{A & B \\ C & D}$ is contractive,
then $\bW_o$ and $\bW_c$ also are bounded contraction operators and we have the bounded-operator factorizations
\begin{equation}   \label{HankDec3}
\fH_{F_\Sigma} = \bW_o \bW_c, \quad (\fH_{F_\Sigma})^* = \bW_c^* \bW_o^*.
\end{equation}
\end{enumerate}
\end{proposition}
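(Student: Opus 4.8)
The plan is to derive all three items from a single, spatially-executed factorization computation, testing the identity component-by-component in $\ell^2$ so as to avoid the fact that the series $\sum_{j<0}A^{-j-1}B\bu(j)$ defining $\bW_c\bu$ (and dually the series defining $\bW_o^*\by$) need only converge weakly. For item~(1), I assume $\cD(\bW_c^*)$ is dense, so that by Proposition~\ref{P:WcWo'}(4) $\bW_c=(\bW_c^*)^*$ is closed and densely defined and the adjoint relation $\langle\bW_c\bu,x\rangle=\langle\bu,\bW_c^*x\rangle$ holds for $\bu\in\cD(\bW_c)$, $x\in\cD(\bW_c^*)$. Fix $\bu\in\cD(\bW_c)$ and put $x_c=\bW_c\bu$. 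The goal is to show $\{CA^i x_c\}_{i\ge0}=\fH_{F_\Sigma}\bu$, which forces $x_c\in\cD(\bW_o)$ and $\bW_o x_c=\fH_{F_\Sigma}\bu$ simultaneously. The key observation is that for each $i\ge0$ and $y\in\cY$ the vector $A^{*i}C^*y$ lies in $\Obs(C|A)$, and hence in $\cD(\bW_c^*)$ by the inclusion $\Obs(C|A)\subset\cD(\bW_c^*)$ recorded above (a consequence of the boundedness of $\fH_{F_\Sigma}^*$). Testing the $i$-th component of $\fH_{F_\Sigma}\bu$ against $y$ and using the matrix representation \eqref{Hankel-matrix} then gives
\begin{align*}
\langle (\fH_{F_\Sigma}\bu)(i),y\rangle
 &=\sum_{j<0}\langle CA^{i-j-1}B\,\bu(j),y\rangle
  =\sum_{j<0}\langle \bu(j),\,B^*A^{*(-j-1)}A^{*i}C^* y\rangle\\
 &=\langle \bu,\,\bW_c^*(A^{*i}C^* y)\rangle
  =\langle \bW_c\bu,\,A^{*i}C^* y\rangle=\langle CA^{i}x_c,\,y\rangle,
\end{align*}
the crucial third equality being the adjoint relation (legitimate precisely because $A^{*i}C^*y\in\cD(\bW_c^*)$). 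As $i$ and $y$ are arbitrary, this is the asserted identity $\fH_{F_\Sigma}|_{\cD(\bW_c)}=\bW_o\bW_c$. The ``in particular'' clause then follows since $\ell_{\tu{fin},\cU}(\BZ_-)\subset\cD(\bW_c)$ and $\bW_c\,\ell_{\tu{fin},\cU}(\BZ_-)=\Rea(A|B)$ by Proposition~\ref{P:WcWo'}(4), so that $\Rea(A|B)\subset\im\bW_c\subset\cD(\bW_o)$.

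Item~(2) is the mirror image, and I would prove it by running the identical computation on $\fH_{F_\Sigma}^*$, whose matrix is $[\,B^*A^{*(i-j-1)}C^*\,]$. Assuming $\cD(\bW_o)$ dense (so $\bW_o^*$ is closed, densely defined, with adjoint relation to $\bW_o$), I fix $\by\in\cD(\bW_o^*)$, set $x_o=\bW_o^*\by$, and test the $j$-th component of $\fH_{F_\Sigma}^*\by$ against $u\in\cU$. Here the role played above by $\Obs(C|A)\subset\cD(\bW_c^*)$ is instead played by $\Rea(A|B)\subset\cD(\bW_o)$ (established above from boundedness of $\fH_{F_\Sigma}$): since $A^{-j-1}Bu\in\Rea(A|B)\subset\cD(\bW_o)$, the same pairing manipulation yields $(\fH_{F_\Sigma}^*\by)(j)=B^*A^{*(-j-1)}x_o$ for all $j<0$, whence $x_o\in\cD(\bW_c^*)$ and $\bW_c^*\bW_o^*\by=\fH_{F_\Sigma}^*\by$. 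The inclusion $\Obs(C|A)\subset\cD(\bW_c^*)$ then drops out from $\bW_o^*\,\ell_{\tu{fin},\cY}(\BZ_+)=\Obs(C|A)$.

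For item~(3) I would first upgrade to boundedness by a telescoping (Lyapunov) estimate. Contractivity $\|M\|\le1$ gives $M^*M\preceq I$ and $MM^*\preceq I$; taking diagonal blocks yields $A^*A+C^*C\preceq I$ and $AA^*+BB^*\preceq I$. From the first, $\|CA^nx\|^2\le\|A^nx\|^2-\|A^{n+1}x\|^2$, so $\sum_{n\ge0}\|CA^nx\|^2\le\|x\|^2$, showing $\cD(\bW_o)=\cX$ with $\bW_o$ a contraction; from the second, the analogous estimate gives $\cD(\bW_c^*)=\cX$ with $\bW_c^*$ a contraction, hence $\bW_c=(\bW_c^*)^*$ is an everywhere-defined contraction. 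Both factorizations in \eqref{HankDec3} then follow either from items~(1)--(2) (whose hypotheses now hold vacuously and whose domains are now all of $\cX$) or, more directly, from the finitely-supported identity $\fH_{F_\Sigma}\bu=\bW_o\bW_c\bu$ extended by boundedness and density of $\ell_{\tu{fin},\cU}(\BZ_-)$.

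The main obstacle throughout is precisely the non-convergence in norm of the series defining $\bW_c$ and $\bW_o^*$, which blocks the naive manipulation of pulling $CA^i$ inside the sum. The plan resolves this entirely by working spatially and converting every pairing into an application of the adjoint relation, the legitimacy of which rests on the two Hankel-boundedness inclusions $\Obs(C|A)\subset\cD(\bW_c^*)$ and $\Rea(A|B)\subset\cD(\bW_o)$ established before the proposition; once these are in hand, the computations are routine and the only genuine work in item~(3) is the elementary telescoping estimate.
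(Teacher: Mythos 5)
Your proposal is correct, and it is worth noting that the paper itself does not prove Proposition \ref{P:HankelDecs} at all: it is imported wholesale from \cite{KYP1}, with only the informal discussion preceding the statement (boundedness of $\fH_{F_\Sigma}$ giving $\Rea(A|B) \subset \cD(\bW_o)$ and the factorization on finitely supported inputs, and dually for $\fH_{F_\Sigma}^*$) offered as motivation for why the general factorization is ``plausible.'' What you supply is a genuine, self-contained completion of that sketch. The key move --- rather than trying to pass from $\ell_{\tu{fin},\cU}(\BZ_-)$ to all of $\cD(\bW_c)$ by a closure or density argument, you test the $i$-th component of $\fH_{F_\Sigma}\bu$ against $y \in \cY$, recognize the resulting row as $\bW_c^*(A^{*i}C^*y)$ (legitimate because $A^{*i}C^*y \in \Obs(C|A) \subset \cD(\bW_c^*)$, which needs only boundedness of $\fH_{F_\Sigma}^*$), and then invoke the adjoint relation $\langle \bW_c\bu, x\rangle = \langle \bu, \bW_c^*x\rangle$ --- is exactly the right way to sidestep the failure of norm convergence of the series defining $\bW_c\bu$, and it simultaneously establishes membership $\bW_c\bu \in \cD(\bW_o)$ and the identity \eqref{HankDec1}. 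The dual computation for item (2) and the telescoping Lyapunov estimates $A^*A + C^*C \preceq I$, $AA^*+BB^* \preceq I$ for item (3) are both correct; the only mild care needed (which you handle implicitly) is that the componentwise series expression for $(\fH_{F_\Sigma}\bu)(i)$ converges absolutely because each row of the Hankel matrix lies in $\ell^2$, again by boundedness of $\fH_{F_\Sigma}$. In short: the argument is sound, complete modulo the two Hankel-boundedness inclusions the paper records before the proposition, and makes this installment of the paper self-contained where the published text defers to \cite{KYP1}.
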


The following result from \cite{KYP1} describes the implications of $\ell^2$-exact controllability and $\ell^2$-exact observability
on the operators $\bW_o$ and $\bW_c$

\begin{proposition}[Corollary 2.5 in \cite{KYP1}] \label{P:ell2implics}
Let $\Si$ be a  discrete-time linear system as in
\eqref{dtsystem} with system matrix
$M$ as in \eqref{sysmat}. Assume that the transfer function $F_\Si$
defined by \eqref{trans}
has an analytic continuation to an
$H^{\infty}$-function on ${\mathbb D}$.
\begin{itemize}
\item[(1)] If $\Sigma$ is $\ell^2$-exactly controllable, then $\bW_o$
is bounded.

\item[(2)] If $\Sigma$ is $\ell^2$-exactly observable, then $\bW_c$
is bounded.

\item[(3)] $\Sigma$ is $\ell^2$-exactly minimal, i.e., both
$\ell^2$-exactly controllable and $\ell^2$-exactly observable,
then $\bW_o$ and $\bW_c^*$ are both bounded and bounded below.
\end{itemize}
\end{proposition}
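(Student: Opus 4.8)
The plan is to obtain all three items from the Hankel factorizations of Proposition~\ref{P:HankelDecs}, the closed graph theorem, and the standard Hilbert-space duality relating surjectivity of a bounded operator to bounded-below-ness of its adjoint. The key observation throughout is that the very notions of $\ell^2$-exact controllability and observability presuppose that $\bW_c=(\bW_c^*)^*$ and $\bW_o^*=(\bW_o)^*$ are genuine operators, so that $\cD(\bW_c^*)$ and $\cD(\bW_o)$ are automatically dense; this is exactly what is needed to invoke Proposition~\ref{P:HankelDecs}.

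For item (1) I would argue as follows. Since $\Si$ is $\ell^2$-exactly controllable, $\im\bW_c=\bW_c\cD(\bW_c)=\cX$, and since $\cD(\bW_c^*)$ is dense and $F_\Si$ continues to an $H^\infty$-function, Proposition~\ref{P:HankelDecs}(1) gives $\im\bW_c\subseteq\cD(\bW_o)$ along with the factorization \eqref{HankDec1}. Hence $\cD(\bW_o)\supseteq\im\bW_c=\cX$, so $\bW_o$ is everywhere defined; being closed by Proposition~\ref{P:WcWo'}(1), it is therefore bounded by the closed graph theorem. Item (2) is the exact dual: $\ell^2$-exact observability gives $\im\bW_o^*=\cX$, Proposition~\ref{P:HankelDecs}(2) gives $\im\bW_o^*\subseteq\cD(\bW_c^*)$, so $\cD(\bW_c^*)=\cX$; as $\bW_c^*$ is closed by Proposition~\ref{P:WcWo'}(3), the closed graph theorem makes it bounded, whence $\bW_c=(\bW_c^*)^*$ is bounded as well.

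For item (3) I would first combine items (1) and (2): under $\ell^2$-exact minimality both hypotheses hold, so $\bW_o$ is bounded and $\bW_c$, hence $\bW_c^*$, is bounded. The remaining point is the bounded-below conclusion, where I would invoke the criterion that a bounded operator $T$ between Hilbert spaces is bounded below precisely when $T^*$ is surjective. Since all four operators are now genuinely bounded and everywhere defined, double adjoints return the original operators and this criterion applies directly: taking $T=\bW_o$, its adjoint $\bW_o^*$ is surjective by $\ell^2$-exact observability, so $\bW_o$ is bounded below; taking $T=\bW_c^*$, its adjoint $\bW_c$ is surjective by $\ell^2$-exact controllability, so $\bW_c^*$ is bounded below.

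I expect the only real obstacle to be bookkeeping rather than substance: one must confirm that the density hypotheses of Proposition~\ref{P:HankelDecs} are in force (they are, being implicit in $\bW_c$ and $\bW_o^*$ being operators rather than mere relations) and, in item (3), carefully pair each surjectivity hypothesis with the correct adjoint so that the duality delivers bounded-below-ness of $\bW_o$ and $\bW_c^*$ themselves rather than of their adjoints.
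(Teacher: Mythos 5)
Your argument is correct. The paper does not actually prove Proposition \ref{P:ell2implics} here --- it is imported from \cite{KYP1} as a recalled result --- but your derivation is exactly the natural one given the tools the paper does recall: Proposition \ref{P:HankelDecs} yields $\cD(\bW_o)\supseteq\im\bW_c$ and $\cD(\bW_c^*)\supseteq\im\bW_o^*$, the closed graph theorem upgrades the resulting everywhere-defined closed operators to bounded ones, and the standard equivalence that a bounded Hilbert-space operator $T$ is bounded below if and only if $T^*$ is surjective delivers item (3); your observation that the density of $\cD(\bW_c^*)$ and $\cD(\bW_o)$ is presupposed by $\bW_c$ and $\bW_o^*$ being well-defined operators is precisely the bookkeeping needed to invoke Proposition \ref{P:HankelDecs}.
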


The following result will be useful in the sequel.

\begin{proposition}  \label{P:Wc}   Suppose that the discrete-time linear system $\Sigma$ given by \eqref{dtsystem} is
minimal and that its transfer function $F_\Sigma$ has analytic continuation to an $H^\infty$-function on ${\mathbb D}$,
so (by Propositions   \ref{P:WcWo'} and \ref{P:HankelDecs}) $\cD(\bW_c^*) \supset \Obs(C|A)$ is dense in $\cX$
and $\bW_c = (\bW_c^*)^*$ is densely defined with dense range $\im(\bW_c) \supset \Rea(A|B)$.

\begin{enumerate}
\item[(1)]  Suppose that $(\bu(n), \bx(n), \by(n))_{n \ge n_{-1}}$ is a system trajectory of $\Si$ with initialization $\bx(n_{-1}) = 0$.
Define an input string $\bu' \in \ell_{{\rm fin},\cU}({\mathbb Z}_-)$ by
$$
   \bu'(n) = \begin{cases}  0 &\text{if } n < n_{-1}, \\
     \bu(n) &\text{if } n_{-1} \le n < 0.  \end{cases}
$$
Then $\bx(0) = \bW_c \bu'$.

\item[(2)]  Suppose that $\bu \in \ell^2_\cU({\mathbb Z}_-)$ is in $\cD(\bW_c)$ and $\widetilde u \in \cU$.
Define a new input string $\bu' \in \ell^2_\cU({\mathbb Z}_-)$ by
$$
   \bu'(n) = \begin{cases}  \bu(n+1) & \text{if } n < -1, \\
              \widetilde u & \text{if } n= -1.   \end{cases}
$$
Then  $\bu' \in \cD(\bW_c)$ and
$$
    \bW_c \bu' = A \bW_c \bu + B \widetilde u.
$$
\end{enumerate}
\end{proposition}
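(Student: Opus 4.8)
The plan is to treat the two parts separately: part (1) reduces to a finite computation, while part (2) is the substantive one and must be carried out through the weak-limit characterization of $\bW_c$ recorded in Proposition \ref{P:WcWo'}(4).

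For part (1), I would first note that $\bu'$ is finitely supported, so $\bu'\in\ell_{{\rm fin},\cU}(\BZ_-)\subset\cD(\bW_c)$ and, by the explicit action in \eqref{Wc-act} (which on finitely supported strings stabilizes to a finite sum, giving the identification $\bW_c\ell_{{\rm fin},\cU}(\BZ_-)=\Rea(A|B)$), we have $\bW_c\bu'=\sum_{k=n_{-1}}^{-1}A^{-k-1}B\,\bu(k)$. Independently, iterating the state recursion $\bx(n+1)=A\bx(n)+B\bu(n)$ forward from the initialization $\bx(n_{-1})=0$ yields, by the standard variation-of-constants formula, $\bx(0)=\sum_{k=n_{-1}}^{-1}A^{-1-k}B\,\bu(k)$. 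The two expressions coincide termwise, which is the assertion. No analytic subtlety arises here since only finite sums are involved.

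For part (2), $\bu$ need not be finitely supported, so I cannot work with finite sums directly and must instead invoke the defining limit \eqref{limit-c}. Writing $S_K=\sum_{k=-K}^{-1}A^{-k-1}B\,\bu(k)$ and $S_K'=\sum_{k=-K}^{-1}A^{-k-1}B\,\bu'(k)$, I would isolate the $k=-1$ term of $S_K'$, which equals $B\widetilde u$ since $\bu'(-1)=\widetilde u$, and re-index the remaining terms by $j=k+1$ to arrive at the clean identity $S_K'=B\widetilde u+A\,S_{K-1}$. Then for a test vector $x\in\cD(\bW_c^*)$ I would write $\langle x,S_K'\rangle=\langle x,B\widetilde u\rangle+\langle A^*x,S_{K-1}\rangle$ and pass to the limit $K\to\infty$ (the shift from $S_K$ to $S_{K-1}$ being harmless, as both partial-sum sequences have the same limit). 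Provided the limit of $\langle A^*x,S_{K-1}\rangle$ may be evaluated by \eqref{limit-c}, it equals $\langle A^*x,\bW_c\bu\rangle=\langle x,A\bW_c\bu\rangle$, and hence $\langle x,S_K'\rangle\to\langle x,\,A\bW_c\bu+B\widetilde u\rangle$ for every $x\in\cD(\bW_c^*)$. By the characterization \eqref{limit-c}--\eqref{Wc-act} this is exactly the statement that $\bu'\in\cD(\bW_c)$ with $\bW_c\bu'=A\bW_c\bu+B\widetilde u$.

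The step I expect to require the most care is the justification that $\langle A^*x,S_{K-1}\rangle\to\langle A^*x,\bW_c\bu\rangle$: the limit \eqref{limit-c} defining $\bW_c\bu$ is only guaranteed against test vectors lying in $\cD(\bW_c^*)$, so I first need to know that $A^*x\in\cD(\bW_c^*)$ whenever $x\in\cD(\bW_c^*)$. I would establish this invariance directly from the domain description \eqref{bWc*1}: membership $x\in\cD(\bW_c^*)$ says that $\{B^*A^{*(-n-1)}x\}_{n\le-1}\in\ell^2_\cU(\BZ_-)$, equivalently $\{B^*A^{*m}x\}_{m\ge0}\in\ell^2$, and replacing $x$ by $A^*x$ merely advances this sequence one index to $\{B^*A^{*m}x\}_{m\ge1}$, which remains square-summable. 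Thus $\cD(\bW_c^*)$ is invariant under $A^*$, the limit passage is legitimate, and the rest of part (2) is bookkeeping.
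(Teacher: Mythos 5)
Your proposal is correct. Part (1) is essentially identical to the paper's argument: finitely supported strings lie in $\cD(\bW_c)$, the action of $\bW_c$ reduces to a finite sum, and iterating the state recursion from $\bx(n_{-1})=0$ produces the same sum. For part (2) you reach the same conclusion by a somewhat different route. The paper first establishes the intertwining relation $\bW_c^* A^*|_{\cD(\bW_c^*)} = \cS_- \bW_c^*$ (with $\cS_-$ the truncated right shift on $\ell^2_\cU(\BZ_-)$), passes to adjoints to get that $\cD(\bW_c)$ is invariant under $\cS_-^*$ with $\bW_c \cS_-^*|_{\cD(\bW_c)} = A\bW_c$, and then decomposes $\bu' = \cS_-^*\bu + \Pi_{-1}\widetilde u$. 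You instead work directly with the partial sums in the weak-limit characterization \eqref{limit-c}--\eqref{Wc-act}: the re-indexing identity $S_K' = B\widetilde u + A S_{K-1}$, paired against test vectors $x\in\cD(\bW_c^*)$, exhibits the required limit $\langle x, S_K'\rangle \to \langle x, A\bW_c\bu + B\widetilde u\rangle$. Both arguments hinge on the same key fact, which you correctly isolate and prove: $A^*$-invariance of $\cD(\bW_c^*)$, needed so that $A^*x$ is a legitimate test vector. Your version is more self-contained in that it avoids justifying the passage to adjoints of an intertwining relation between unbounded operators (a step the paper states without detail); the paper's version is more conceptual and packages the computation as a shift-invariance statement that is reused later (e.g.\ in Remark \ref{R:Wc}). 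Either way the proof is sound.
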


\begin{proof}
We start with item (1). From item (4) of Proposition \ref{P:WcWo'} see that $\ell_{{\rm fin}, \cU}({\mathbb Z}_+)$ is contained in $\cD(\bW_c)$, and thus $\bu'\in \cD(\bW_c)$. From formula \eqref{limit-c} for the action of $\bW_c$ on its domain we obtain that
\begin{equation}  \label{Wc-fin}
  \bW_c \bu' = \sum_{k \in {\mathbb Z}_-} A^{-k-1} B \bu'(k)
  =\sum_{k = n_{-1}}^{-1} A^{-k-1} B \bu(k)
\end{equation}
where the sum is well defined since there are only finitely many nonzero terms.
By a standard induction argument, using the input-state equation in \eqref{dtsystem}, one verifies that this is the formula for $\bx(0)$ for a system trajectory $(\bu(n), \bx(n), \by(n))_{n \ge n_{-1}}$ with initialization $\bx(n_{-1}) = 0$.
This verifies (1).

As for item (2), it is easily verified that $\cD(\bW_c^*)$ is invariant under $A^*$ and that the following intertwining condition holds:
$$
    \bW_c^* A^*|_{\cD(\bW_c^*)} = \cS_- \bW_c^*,
$$
with $\cS_-$ the truncated right shift operator on $\ell^2_\cU({\mathbb Z}_-)$ given by
$$
   ( \cS_- \bu)(n) = \bu(n-1) \text{ for } n \in {\mathbb Z}_-.
$$
The adjoint version of this is that $\cD(\bW_c)$ is invariant under the untruncated left shift operator $\cS_-^*$
on $\ell^2_\cU({\mathbb Z}_-)$
$$
    (\cS_-^* \bu)(n) = \begin{cases}  \bu(n+1) &\text{if } n < -1, \\
             0 &\text{if } n=-1  \end{cases}
$$
and we have the intertwining condition
$$
  \bW_c \cS_-^*|_{\cD(\bW_c)} = A \bW_c.
$$
Next note that $\cS_-^* \bu= \bu' - \Pi_{-1} \wtil{u}$, with $\Pi_{-1}:\cU\to\ell^2_{\cU}(\BZ_-)$ the embedding of $\cU$ into the $-1$-th entry of $\ell^2_{\cU}(\BZ_-)$. This implies that
\[
\bu'=\cS_-^* \bu + \Pi_{-1} \wtil{u}\in \cS_-^* \cD(\bW_c)+ \ell_{{\rm fin},\cU}({\mathbb Z}_-)\subset \cD(\bW_c),
\]
and
\begin{equation}   \label{intertwine1}
A \bW_c\bu
=\bW_c \cS_-^*|_{\cD(\bW_c)}\bu
=\bW_c (\bu' -\Pi_{-1}\wtil{u})=\bW_c \bu' -B\wtil{u},
\end{equation}
which provides the desired identity.
\end{proof}

\begin{remark}  \label{R:Wc}  It is of interest to consider the shift $\bW_c^{(1)}$ of the controllability operator
$\bW_c$ to the interval $(-\infty, 0]$ in place of ${\mathbb Z}_- = (-\infty, 0)$, i.e.,
$$
  \bW_c^{(1)} = \bW_c \tau^{-1}
$$
where the map $\tau$ transforms sequences $\bu$ supported on ${\mathbb Z}_-  = (-\infty, 0)$ to sequences $\bu'$
supported on $(-\infty, 0]$ according to the action
$$
   (\tau \bu)(n) = \bu(n+1)\quad \text{ for } n < 0
$$
with inverse given by
$$
  (\tau^{-1} \bv)(n) = \bv(n-1)\quad \text{ for } n \le 0.
$$
For all $\bu \in \ell^2_\cU({\mathbb Z}_-)$ and $\widetilde u \in \cU$, define a sequence
$(\bu, \widetilde u) \in \ell^2_\cU((-\infty, 0])$ by
$$
  (\bu, \widetilde u)(n) = \begin{cases} \bu(n) &\text{if } n \in {\mathbb Z}_-, \\
        \widetilde u &\text{if } n=0.  \end{cases}
$$
The result of item (2) in Proposition \ref{P:Wc} can be interpreted as saying: given $\bu \in \ell^2_\cU({\mathbb Z}_-)$ and $\widetilde u \in \cU$ we have
\[
(\bu, \widetilde u) \in \cD(\bW_c^{(1)}) \quad \Longleftrightarrow \quad
\bu \in \cD(\bW_c)
\]
and in that case $ \bW_c^{(1)} (\bu, \widetilde u)  = A \bW_c \bu + B \widetilde u$.
\end{remark}

\section{Storage functions} \label{S:Storage}

In the case of systems with an infinite dimensional state space we allow
storage functions to also attain $+\infty$ as a value. Set
$[0,\infty]:= \BR_+\cup\{+\infty\}$. Then, given a discrete-time
linear system $\Si$ as in \eqref{dtsystem}, we say that a function $S
\colon \cX \to [0, \infty]$ is a {\em storage function} for the
system $\Sigma$ if the dissipation inequality
\begin{equation}\label{disineq}
S(\bx(n+1)) \le S(\bx(n)) + \| \bu(n) \|_{\cU}^{2} - \|
\by(n)\|_{\cY}^{2}  \text{ for } n \ge N_0
\end{equation}
holds along all system trajectories $(\bu(n), \bx(n),
\by(n))_{n \ge N_0}$ with state initialization $x(N_0) = x_0$ for some $x_0 \in \cX$
at some $N_0 \in {\mathbb Z}$,
and $S$ is normalized to satisfy
\begin{equation}\label{normalization'}
S(0) = 0.
\end{equation}

As a first result we show that existence of a storage function for $\Si$ is a sufficient condition for the transfer function to have an analytic continuation to a Schur class function.

\begin{proposition}\label{P:storage-Schur}
Suppose that the system $\Sigma$ in \eqref{dtsystem} has a storage
function $S$.  Then the transfer function $F_{\Sigma}$ of $\Si$ defined in
\eqref{trans} has an analytic continuation to a function
in the Schur class $\cS(\cU, \cY)$.
 \end{proposition}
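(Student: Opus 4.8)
The plan is to extract from a single telescoping of the dissipation inequality an $\ell^2$-contractivity statement for the input-output map of zero-initialized trajectories, and then to recognize that this input-output map is precisely the analytic (block) Toeplitz operator whose symbol is $F_\Sigma$; Schur-class membership will then follow from the standard identification of bounded analytic Toeplitz operators with multiplication by $H^\infty$ functions.

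First I would fix an arbitrary finitely supported input string $\bu$, say $\bu(n)=0$ for $n<N_0$, and consider the trajectory $(\bu(n),\bx(n),\by(n))_{n\ge N_0}$ with the zero initialization $\bx(N_0)=0$ that is permitted by the definition of a storage function. Since $S(\bx(N_0))=S(0)=0$, the dissipation inequality \eqref{disineq} propagates finiteness inductively, so $S(\bx(n))<\infty$ for every $n$. Summing \eqref{disineq} from $N_0$ to $N$, the left side telescopes to $S(\bx(N+1))-S(\bx(N_0))=S(\bx(N+1))\ge 0$, yielding
\[
\sum_{n=N_0}^{N}\|\by(n)\|_\cY^2 \;\le\; \sum_{n=N_0}^{N}\|\bu(n)\|_\cU^2
\]
for every $N$. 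Letting $N\to\infty$ shows $\by\in\ell^2_\cY(\BZ_+)$ with $\|\by\|_{\ell^2}\le\|\bu\|_{\ell^2}$.

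Next I would identify the map $\bu\mapsto\by$. Solving the state recursion with $\bx(N_0)=0$ gives $\by(n)=\sum_{j\ge 0}h_j\,\bu(n-j)$ with Markov parameters $h_0=D$ and $h_j=CA^{j-1}B$ for $j\ge 1$; these are exactly the Taylor coefficients of $F_\Sigma$ at $0$, since $F_\Sigma(\lambda)=D+\lambda C(I-\lambda A)^{-1}B=\sum_{j\ge 0}h_j\lambda^j$ on a neighborhood of $0$. Thus on the dense subspace of finitely supported sequences the input-output map agrees with the causal block Toeplitz operator $\fT=[\,h_{i-j}\,]_{i,j\ge 0}$, and the previous step shows $\fT$ extends to a contraction $\ell^2_\cU(\BZ_+)\to\ell^2_\cY(\BZ_+)$.

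Finally, passing to the $Z$-transform so that $\ell^2(\BZ_+)\cong H^2(\BD)$ and $\fT$ becomes a bounded operator $T$ on Hardy space intertwining multiplication by $\lambda$ on the two sides, I would invoke the standard fact that every bounded shift-intertwining (analytic Toeplitz) operator $T\colon H^2_\cU\to H^2_\cY$ is multiplication $M_F$ by some $F\in H^\infty(\cL(\cU,\cY))$ with $\|F\|_\infty=\|T\|$. Since $\|T\|\le 1$ and the Taylor coefficients of $F$ are the $h_j$, the function $F$ is an analytic continuation of $F_\Sigma$ to all of $\BD$ satisfying $\|F\|_\infty\le 1$; that is, $F_\Sigma\in\cS(\cU,\cY)$. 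I expect this last identification to be the main obstacle: the telescoping and the Toeplitz description are routine, whereas the genuine content is producing the analytic continuation from a neighborhood of $0$ to all of $\BD$ — one cannot evaluate $F_\Sigma$ on $\BT$ a priori — and transferring the operator-norm bound on $T$ to the supremum norm of the operator-valued symbol.
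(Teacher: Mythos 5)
Your proposal is correct and follows essentially the same route as the paper: telescope the dissipation inequality along zero-initialized trajectories to get $\ell^2$-contractivity of the input-output map, then pass to the $Z$-transform to identify that map with multiplication by $F_\Sigma$ on $H^2$ and read off the analytic continuation and the norm bound. The only (cosmetic) difference is that you package the last step via the characterization of bounded shift-intertwining operators as analytic Toeplitz operators with $H^\infty$ symbol, whereas the paper gets the continuation directly by applying the contractive input-output map to constant $\widehat u$ and then uses $\|M_{F_\Sigma}\|=\|F_\Sigma\|_\infty$.
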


 The proof of Proposition \ref{P:storage-Schur} relies on the
following observation, which will also be of use in the sequel.

\begin{lemma}\label{L:finH2}
Suppose that the system $\Sigma$ in \eqref{dtsystem} has a storage
function $S$. For each system trajectory
$(\bu(n),\bx(n),\by(n))_{n\in\BZ}$ and $N_0\in\BZ$ so that
$\bx(N_0)=0$, the following inequalities hold for all $N\in\BZ_+$:
\begin{align}
S(\bx(N_0+N+1))&\le \sum_{n=N_0}^{N_0+N} \| \bu(n)\|_{\cU}^{2} -
\sum_{n=N_0}^{N_0+N} \| \by(n)
 \|^{2}_{\cY};\label{Sbound}\\
\sum_{n=N_0}^{N_0+N} \| \by(n) \|^{2}_{\cY} &\le \sum_{n=N_0}^{N_0+N}
\| \bu(n)
\|^{2}_{\cU}. \label{IOdisineq}
\end{align}
\end{lemma}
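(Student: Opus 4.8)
The plan is to obtain \eqref{Sbound} by telescoping the dissipation inequality \eqref{disineq} along the trajectory starting from the zero state at time $N_0$, and then to read off \eqref{IOdisineq} from the nonnegativity of $S$.

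First I would record that, by the normalization \eqref{normalization'}, the hypothesis $\bx(N_0)=0$ gives $S(\bx(N_0)) = S(0) = 0$. Viewing the restriction of the given trajectory to $n \ge N_0$ as a trajectory initialized at $N_0$ with $x_0 = 0$, the dissipation inequality \eqref{disineq} applies for all $n \ge N_0$. Before summing, the one point requiring care is that $S$ is permitted to take the value $+\infty$, so I would first establish by induction on $n \ge N_0$ that $S(\bx(n)) < \infty$ along the trajectory: the base case is $S(\bx(N_0)) = 0$, and \eqref{disineq} shows that if $S(\bx(n))$ is finite then $S(\bx(n+1)) \le S(\bx(n)) + \|\bu(n)\|_{\cU}^2 - \|\by(n)\|_{\cY}^2$ is again finite. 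With finiteness in hand, the inequalities \eqref{disineq} for $n = N_0, N_0+1, \dots, N_0+N$ may be added; the left-hand side telescopes to $S(\bx(N_0+N+1)) - S(\bx(N_0)) = S(\bx(N_0+N+1))$, while the right-hand side equals $\sum_{n=N_0}^{N_0+N}\|\bu(n)\|_{\cU}^2 - \sum_{n=N_0}^{N_0+N}\|\by(n)\|_{\cY}^2$. This is precisely \eqref{Sbound}.

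Finally, since $S$ takes values in $[0,\infty]$, we have $S(\bx(N_0+N+1)) \ge 0$, and combining this with \eqref{Sbound} immediately yields \eqref{IOdisineq}. The only genuine subtlety, and it is hardly an obstacle, is the possible appearance of $+\infty$ as a value of $S$; this is disposed of by the preliminary induction, which guarantees that every quantity entering the telescoping sum is finite. Everything else is a direct application of the dissipation inequality together with the normalization $S(0)=0$.
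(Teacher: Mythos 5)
Your proof is correct and follows essentially the same route as the paper's: an induction to rule out the value $+\infty$ along the trajectory, followed by telescoping the dissipation inequality and using $S \ge 0$. The only cosmetic difference is that the paper first reduces to $N_0 = 0$ by translation invariance, which you bypass by working directly at general $N_0$.
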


\begin{proof}
By the translation invariance of the system $\Si$ we may assume
without loss of generality that $N_0=0$, i.e., $\bx(0)=0$. From
\eqref{disineq} and \eqref{normalization'} we get
\[
S(\bx(1)) \le \| \bu(0)\|^{2} - \| \by(0) \|^{2} + S(0) = \|
 \bu(0)\|^{2} - \| \by(0) \|^{2} < \infty.
\]
Inductively, suppose that $S(\bx(n)) < \infty$.  Then \eqref{disineq}
gives us
\[
S(\bx(n+1)) \le \| \bu(n) \|^{2}_{\cU} - \| \by(n) \|^{2}_{\cY} +
S(\bx(n)) <
   \infty.
\]
We may now rearrange the dissipation inequality for $n\in\BZ_+$ in
the form
\begin{equation}   \label{difdis}
S(\bx(n+1)) - S(\bx(n)) \le \| \bu(n) \|^{2} - \| \by(n) \|^{2} \quad
(n\in\BZ_+).
\end{equation}
Summing from $n=0$ to $n=N$ gives
\[
0 \le S(\bx(N+1)) \le \sum_{n=0}^{N} \| \bu(n)\|_{\cU}^{2} -
\sum_{n=0}^{N} \| \by(n)
 \|^{2}_{\cY},
\]
which leads to
\[
\sum_{n=0}^{N} \| \by(n) \|^{2}_{\cY} \le \sum_{n=0}^{N} \| \bu(n)
\|^{2}_{\cU} \text{ for all } N \in {\mathbb Z}_{+}.
\]
These inequalities prove \eqref{Sbound} and \eqref{IOdisineq} for
$N_0=0$. As observed above, the case of $N_0\not=0$ is then obtained
by translation of the system trajectory.
\end{proof}

\begin{proof}[Proof of Proposition \ref{P:storage-Schur}]
Let $\bu \in \ell^{2}_{\cU}({\mathbb Z}_{+})$ and run the system
$\Sigma$ with input sequence $\bu$ and initial condition $\bx(0)= 0$.
From Lemma \ref{L:finH2}, with $N_0=0$, we obtain that for each
$N\in\BZ_+$ we have
\[
\sum_{n=0}^{N} \| \by(n) \|^{2}_{\cY} \le \sum_{n=0}^{N} \| \bu(n)
\|^{2}_{\cU}\ \text{ for all }\ N \in {\mathbb Z}_{+}.
\]
Letting $N \to \infty$, we conclude that $\bu
\in\ell^{2}_{\cU}({\mathbb Z}_{+})$ implies that the output sequence
$\by$ is in $\ell^{2}_{\cY}({\mathbb Z}_{+})$ with $\| \by
\|^{2}_{\ell^{2}_{\cY}({\mathbb Z}_{+})} \le \| \bu
\|^{2}_{\ell^{2}_{\cU}({\mathbb Z}_{+})}$.

Write $ \widehat u$ and $ \widehat y$ for the $Z$-transforms of $\bu$
and $\by$, respectively, i.e., $\widehat u(z) = \sum_{n=0}^{\infty}
\bu(n) z^{n}$ and  $\widehat y(z)= \sum_{n=0}^{\infty} \by(n) z^{n}$.
Since we have imposed zero-initial condition on the state, it now
follows that $\widehat y(z) = F_{\Sigma}(z) \widehat u(z)$ in a
neighborhood of 0. Since $\bu$ was chosen arbitrarily in
$\ell^{2}_\cU(\BZ_+)$, we see that $\widehat u$ is an arbitrary
element
of $H^2_\cU(\BD)$. Thus, the multiplication operator $M_{F_\Si}
\colon \widehat u \mapsto F_\Si \cdot\widehat u$ maps $H^2_\cU(\BD)$
into $H^2_\cY(\BD)$. In particular, taking $\widehat u\in
H^2_\cU(\BD)$ constant, it follows that $ F_\Si$ has an analytic
continuation to $\BD$. Furthermore, the inequality
\[
\|F_\Si \widehat u\|_{H^2_\cY(\BD)}=\|\widehat y\|_{H^2_\cY(\BD)}=\|
\by \|^{2}_{\ell^{2}_{{\mathbb Z}_{+}}(\cY)} \le \| \bu
\|^{2}_{\ell^{2}_{{\mathbb Z}_{+}}(\cU)}=\|\widehat
u\|^{2}_{H^2_\cU(\BD)},
\]
implies that the operator norm of the multiplication operator
$M_{F_\Si}$  from $H^{2}_{\cU}({\mathbb D})$ to $H^{2}_{\cY}({\mathbb
D})$ is at most 1. It is well known that the operator norm of
$M_{F_\Si}$ is the same as the supremum norm $\| F_\Si \|_{\infty} =
\sup_{z \in {\mathbb D}} \| F_\Si(z) \|$. Hence we obtain that the
analytic continuation of $F_\Si$ is in the Schur class $\cS(\cU,
\cY)$.
\end{proof}

We shall see below (see Proposition \ref{P:SaSr}) that  conversely, if the transfer function $F_\Sigma$ admits an analytic continuation to a Schur class function, then a storage function for $\Si$ exists.\medskip

\paragraph{Quadratic storage functions}   \label{S:QuadStorage}

The class of storage functions associated with solutions to the generalized KYP inequality
\eqref{KYP1b'}--\eqref{KYP1b} are the so-called {\em quadratic storage functions} described next.
We shall say that a storage function $S$ is {\em quadratic} in case there is a positive-semidefinite operator
$H$ on the state space $\cX$ so that $S$ has the form
\begin{equation}\label{QuadStorage1}
S(x)=S_H(x)= \begin{cases}  \| H^\half x \|^2 &\text{for } x \in \cD(H^\half), \\
    +\infty &\text{ otherwise.}  \end{cases}
\end{equation}

If in addition to $F_\Si$ having an analytic continuation to a Schur class function it is assumed that $\Si$ is minimal, it can in
fact be shown (see Theorem \ref{T:Sar}  below) that quadratic storage functions for $\Si$ exist; for the finite dimensional case see \cite{Wil72b}.

\begin{proposition}\label{P:QuadStorage}
Suppose that the function $S \colon \cX \to [0, \infty]$ has the form \eqref{QuadStorage1} for a
(possibly) unbounded positive-semidefinite operator $H$ on $\cX$.  Then $S_H$ is a storage function for $\Sigma$ if and only if $H$ is a positive-semidefinite solution of the generalized KYP-inequality \eqref{KYP1b'}--\eqref{KYP1b}.  Moreover, $S$ is {\em nondegenerate}
in the sense that $S_H(x) > 0$ for all nonzero $x$ in $\cX$ if and only if $H$ is positive-definite.
\end{proposition}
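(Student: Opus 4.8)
The plan is to observe that the spatial KYP-inequality \eqref{KYP1b} is nothing more than a single instance of the one-step dissipation inequality \eqref{disineq}, evaluated on the states where $S_H$ is finite, and then to exploit this observation in both directions. First I would expand \eqref{KYP1b}: the first term equals $\|H^\half x\|^2 + \|u\|^2$ and the second equals $\|H^\half(Ax+Bu)\|^2 + \|Cx+Du\|^2$, so the inequality reads
\[
\|H^\half(Ax + Bu)\|^2 \le \|H^\half x\|^2 + \|u\|^2 - \|Cx + Du\|^2 \qquad (x \in \cD(H^\half),\ u \in \cU),
\]
the left-hand side being meaningful precisely because \eqref{KYP1b'} guarantees $Ax+Bu \in \cD(H^\half)$. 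Since for a trajectory of \eqref{dtsystem} with $\bx(n)=x$, $\bu(n)=u$ one has $\bx(n+1)=Ax+Bu$ and $\by(n)=Cx+Du$, this is exactly \eqref{disineq} restricted to the initial step, read through \eqref{QuadStorage1}.

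For the sufficiency direction (generalized KYP solution $\Lra$ storage function), the normalization \eqref{normalization'} is immediate from $0 \in \cD(H^\half)$. For the dissipation inequality along an arbitrary trajectory I would split into two cases. If $\bx(n) \notin \cD(H^\half)$, then $S_H(\bx(n)) = +\infty$ and, since $\|\bu(n)\|^2$ and $\|\by(n)\|^2$ are finite, the right-hand side of \eqref{disineq} is $+\infty$, so \eqref{disineq} holds trivially. If $\bx(n) \in \cD(H^\half)$, then \eqref{KYP1b'} yields $\bx(n+1) = A\bx(n)+B\bu(n) \in \cD(H^\half)$, and the expanded form of \eqref{KYP1b} applied with $x=\bx(n)$, $u=\bu(n)$ is precisely \eqref{disineq}.

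The necessity direction is where I expect the only real care to be required, because one must extract the domain conditions \eqref{KYP1b'} before \eqref{KYP1b} can even be written down. Given $x \in \cD(H^\half)$ and $u \in \cU$, I would run a system trajectory initialized at some $N_0$ with $\bx(N_0)=x$, $\bu(N_0)=u$; then \eqref{disineq} at $n=N_0$ gives
\[
S_H(Ax+Bu) \le \|H^\half x\|^2 + \|u\|^2 - \|Cx+Du\|^2 < \infty,
\]
and finiteness of $S_H(Ax+Bu)$ forces $Ax+Bu \in \cD(H^\half)$ by \eqref{QuadStorage1}. Specializing to $u=0$ gives $A\cD(H^\half)\subset\cD(H^\half)$ and to $x=0$ gives $B\cU\subset\cD(H^\half)$, which is \eqref{KYP1b'}; with \eqref{KYP1b'} secured, the displayed inequality becomes $\|H^\half(Ax+Bu)\|^2 \le \|H^\half x\|^2 + \|u\|^2 - \|Cx+Du\|^2$, i.e.\ \eqref{KYP1b}.

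Finally, for nondegeneracy, I would note that $S_H(x)=+\infty>0$ automatically when $x\notin\cD(H^\half)$, so that $S_H(x)>0$ for all nonzero $x$ reduces to $\|H^\half x\|^2>0$ for all nonzero $x\in\cD(H^\half)$, that is, $\kr H^\half = \{0\}$. To match this with positive-definiteness of $H$, I would use the identity $\langle Hx,x\rangle = \|H^\half x\|^2$ for $x\in\cD(H)$ together with the inclusion $\kr H^\half \subset \cD(H)$, which holds because $H^\half x = 0 \in \cD(H^\half)$ places $x$ in $\cD(H)=\{x\in\cD(H^\half):H^\half x\in\cD(H^\half)\}$. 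These together give $H$ positive-definite $\Llra \kr H^\half=\{0\}$, completing the equivalence.
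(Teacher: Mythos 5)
Your proof is correct and follows essentially the same route as the paper's: the same expansion of \eqref{KYP1b} as a one-step dissipation inequality, the same case split on whether $\bx(n)\in\cD(H^{\half})$ for sufficiency, and the same one-step trajectory argument with the specializations $u=0$ and $x=0$ to extract \eqref{KYP1b'} for necessity. The only difference is that you spell out the nondegeneracy equivalence (reducing it to $\kr H^{\half}=\{0\}$ and using the inclusion $\kr H^{\half}\subset\cD(H)$), which the paper leaves implicit; that verification is also correct.
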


\begin{proof}
Suppose that $H$ solves \eqref{KYP1b'}--\eqref{KYP1b}. It is clear that $S(0)=\|H^\half 0\|^2=0$, so in order to conclude that $S$ is a storage function it remains to verify the dissipation inequality \eqref{disineq}.
Let $(\bu(n), \bx(n), \by(n))_{n \ge N_0}$ be a system trajectory with state initialization $\bx(n_0)=x_0$ for some $x_0\in\cX$ and $N_0\in\BZ$. Fix $n \ge N_0$.  If $\bx(n) \notin \cD(H^\half)$, then $S_H(\bx(n)) = \infty$
and the dissipation inequality \eqref{disineq} is automatically satisfied.
If  $\bx(n) \in \cD(H^\half)$, then \eqref{KYP1b'} implies that $\bx(n+1)=A\bx(n)+B \bu(n)\in \cD(H^\half)$. Thus $S_H(\bx(n+1))<\infty$. Replacing $x$ by $\bx(n)$ and $u$ by $\bu(n)$ in \eqref{KYP1b} and applying \eqref{dtsystem} we obtain that
\[
\left\|\mat{cc}{H^\half &0\\ 0& I_\cU}\mat{c}{\bx(n)\\ \bu(n)}\right\|^2
-\left\|\mat{cc}{H^\half&0\\ 0& I_\cY}\mat{c}{\bx(n+1)\\ \by(n)}\right\|^2
\geq 0.
\]
This can be rephrased in terms of $S_H$ as
\[
S_H(\bx(n))+\|\bu(n)\|^2-S_H(\bx(n+1))-\|\by(n)\|^2\geq 0,
\]
so that \eqref{disineq} appears after adding $S_H(\bx(n+1))$ on both sides.

Conversely, suppose that $S_H$ is a storage function.  Take  $x \in \cX$ and $u \in \cU$ arbitrarily.  Let $(\bu(n), \bx(n), \by(n))_{n \ge 0}$
be any system trajectory with initialization $\bx(0) = x$ and with $\bu(0) = u$.  Then
the dissipation inequality \eqref{disineq} with $n=0$ gives us
\begin{equation}\label{eqnStoKYP}
S_H(Ax + Bu)  \le S_H(x) + \| u \|^2 - \|y \|^2,\quad\mbox{ with }\quad y=Cx+Du.
\end{equation}
In particular, $S_H(x) < \infty$ (equivalently, $x \in \cD(H^\half)$) implies that $S_H(Ax + Bu) < \infty$
(equivalently, $Ax + B u \in \cD(H^\half)$). Specifying $u=0$ shows that $A \cD(H^\half)\subset \cD(H^\half)$ and specifying $x=0$
shows $B\cU \subset \cD(H^\half)$. Thus \eqref{KYP1b'} holds. Bringing $\|y\|^2$ in \eqref{eqnStoKYP} to the other side and writing out $S_H$ gives
\[
\|H^\half (Ax + Bu)\|^2 +\|Cx+Du\|^2  \le \|H^\half x\|^2 + \| u \|^2,
\]
which provides \eqref{KYP1b}.
\end{proof}

We say that a function $S \colon \cX \to {\mathbb R}_+=[0,\infty)$ is a {\em strict storage function} for the system $\Sigma$ in \eqref{dtsystem} if the strict dissipation inequality \eqref{diss-strict} holds, i.e., if there exists a $\delta > 0$ so that
\begin{equation}   \label{diss-strict2}
  S(\bx(n+1)) -  S(\bx(n))  + \delta \| x(n)\|^2 \le (1- \delta) \| \bu(n)\|^2 - \| \by(n) \|^2\quad (n\ge N_0)
\end{equation}
holds for all system trajectories $\{ \bu(n), \bx(n), \by(n)\}_{n \ge N_0}$, initiated at some $N_0 \in {\mathbb Z}$.
 Note that strict storage functions are not allowed to attain $+\infty$ as a value. The significance of the existence of a
strict storage function for a system $\Sigma$ is that it guarantees that the transfer function $F_\Sigma$ has
analytic continuation to a $H^\infty$-function with $H^\infty$-norm strictly less than 1 as well as a coercivity condition on $S$,
 i.e., we have the following strict version of Proposition \ref{P:storage-Schur}.

\begin{proposition}  \label{P:strictstorage-Schur}
Suppose that the system $\Sigma$ in \eqref{dtsystem} has a strict storage function $S$.  Then
\begin{enumerate}
\item[(1)] the transfer function $F_\Sigma$ has analytic continuation to a function in $H^\infty$ on the unit disk ${\mathbb D}$
with $H^\infty$-norm strictly less than 1, and
\item[(2)] $S$ satisfies a coercivity condition, i.e., there is a $\delta > 0$ so that
\begin{equation}   \label{coercive}
  S(x) \ge \delta \| x \|^2\quad (x\in\cX).
\end{equation}
\end{enumerate}
\end{proposition}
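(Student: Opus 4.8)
The plan is to mirror the proof of Proposition \ref{P:storage-Schur}, exploiting the extra $\delta$-terms in the strict dissipation inequality \eqref{diss-strict2} to upgrade its two conclusions to their strict counterparts. Both parts follow by summing, or merely specializing, \eqref{diss-strict2} along suitably chosen system trajectories, keeping the normalization $S(0)=0$ in mind. A convenient simplification throughout is that strict storage functions are by definition finite-valued, so none of the $+\infty$ bookkeeping of the general storage-function case is needed.

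For part (2), the coercivity estimate, I would fix an arbitrary $x \in \cX$ and run the system from time $0$ with state initialization $\bx(0)=x$ and zero input $\bu(n)=0$ for all $n \ge 0$; this is a legitimate (singly-infinite) system trajectory to which \eqref{diss-strict2} applies with $N_0=0$. Taking $n=0$ in \eqref{diss-strict2}, using $\bu(0)=0$, and discarding the nonnegative quantities $S(\bx(1)) \ge 0$ and $\|\by(0)\|^2 \ge 0$ leaves exactly $\delta\|x\|^2 \le S(x)$, which is \eqref{coercive}.

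For part (1), I would run the system with an arbitrary input $\bu \in \ell^2_\cU(\BZ_+)$ and zero initial condition $\bx(0)=0$, exactly as in Proposition \ref{P:storage-Schur}. Summing \eqref{diss-strict2} from $n=0$ to $n=N$, the left-hand side telescopes to $S(\bx(N+1)) - S(\bx(0)) + \delta\sum_{n=0}^{N}\|\bx(n)\|^2$; dropping the nonnegative terms $S(\bx(N+1))$ and $\delta\sum_{n=0}^{N}\|\bx(n)\|^2$ and invoking $S(0)=0$ yields
\[
\sum_{n=0}^{N} \|\by(n)\|^2 \le (1-\delta)\sum_{n=0}^{N} \|\bu(n)\|^2 .
\]
Letting $N \to \infty$ gives $\|\by\|^2_{\ell^2_\cY(\BZ_+)} \le (1-\delta)\|\bu\|^2_{\ell^2_\cU(\BZ_+)}$. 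As in Proposition \ref{P:storage-Schur}, taking constant inputs shows that $F_\Sigma$ has an analytic continuation to $\BD$ and that the multiplication operator $M_{F_\Si}$ maps $H^2_\cU(\BD)$ into $H^2_\cY(\BD)$ with operator norm at most $\sqrt{1-\delta}$. Since $\|M_{F_\Si}\|$ equals the supremum norm $\|F_\Si\|_\infty$, we conclude $\|F_\Si\|_\infty \le \sqrt{1-\delta} < 1$, establishing (1).

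I do not expect a genuine obstacle here, since the argument is a direct strict refinement of Proposition \ref{P:storage-Schur}. The only points requiring care are, first, confirming that the two chosen initializations are admissible trajectories to which \eqref{diss-strict2} applies, and second, tracking that the factor $1-\delta$ (rather than $1$) survives passage to the limit---this is precisely what forces the strict bound $\|F_\Si\|_\infty < 1$ and what allows one to identify the operator norm of $M_{F_\Si}$ with the supremum norm of $F_\Si$ as in the non-strict case.
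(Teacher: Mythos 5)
Your proposal is correct and follows essentially the same route as the paper's proof: part (1) by summing the strict dissipation inequality along a zero-initialized trajectory, dropping the nonnegative terms to retain the factor $1-\delta$, and passing to the multiplication operator; part (2) by evaluating the inequality at $n=0$ along a trajectory with $\bu(0)=0$. No gaps.
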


\begin{proof}
Assume that $S \colon \cX \to [0, \infty)$ is a strict storage function for $\Sigma$. Then for each system trajectory
$(\bu(n), \bx(n), \by(n)))_{n \ge 0}$  with initialization $\bx(0) = 0$, the strict dissipation inequality \eqref{diss-strict2} gives that there
is a $\delta > 0$ so that for $n\geq 0$ we have
\begin{align*}
S(\bx(n+1)) - S(\bx(n)) & \le  - \delta \| x \|^2 + (1- \delta) \| \bu(n) \|^2 - \| \by(n) \|^2 \\
& \le (1- \delta) \| \bu(n) \|^2 - \| \by(n) \|^2.
\end{align*}
Summing up over $n=0,1,2,\dots, N$ for some $N \in {\mathbb N}$  for a system trajectory $(\bu(n), \bx(n), \by(n))_{n\ge 0}$ subject to
initialization $\bx(0) = 0$ then gives
$$
0  \le S(\bx(N+1)) = S(\bx(N+1)) - S(\bx(0))  \le (1-\delta) \sum_{n=0}^N \| \bu(n) \|^2 - \sum_{n=0}^N \| \by(n) \|^2.
$$
By restricting to input sequences $\bu\in \ell^2_\cU(\BZ_+)$, it follows that the corresponding output sequences satisfy
$\by\in\ell^2_\cY(\BZ_+)$ and $\|\by\|_{\ell^2_\cU(\BZ_+)}^2 \le (1 - \delta)  \|\bu\|_{\ell^2_\cY(\BZ_+)}^2$.
Taking $Z$-transform and using the Plancherel theorem then gives
$$
 \| M_{F_\Sigma} \widehat \bu \|^2_{H^2_\cY({\mathbb D})}=
\| \widehat \by \|^2_{H^2_\cY({\mathbb D})}\le
(1- \delta) \| \widehat \bu \|^2_{H^2_\cU({\mathbb D})}.
$$
Thus $\|M_{F_\Sigma}\|\leq \sqrt{1-\delta} < 1$. This implies $F_\Sigma$ has analytic continuation to an $\cL(\cU, \cY)$-valued $H^\infty$ function with
$H^\infty$-norm at most $\|M_{F_\sigma}\|\leq\sqrt{1-\delta} < 1$.

To this point we have not made use of the presence of the term $\delta \| x(n) \|^2$ in the strict dissipation inequality \eqref{diss-strict2}.
We now show how the presence of this term leads to the validity of the coercivity condition \eqref{coercive} on $S$.  Let $x_0$
be any state in $\cX$ and let $(\bu(n), \bx(n), \by(n))_{n\ge 0}$ be any system trajectory with initialization $\bx(0) = x_0$ and $\bu(0) = 0$.
Then the strict dissipation inequality \eqref{diss-strict2} with $n=0$ gives us
$$
\delta \| x_0 \|^2 = \delta \| \bx(0) \|^2 \le S(\bx(1)) + \delta \| \bx(0)\|^2 + \| \by(0) \|^2 \le S(\bx(0)) = S(x_0),
$$
i.e., $S(x_0) \ge \delta \| x_0 \|^2$ for each $x_0 \in \cX$, verifying the validity of \eqref{coercive}.
\end{proof}

The following result classifies which quadratic storage functions $S_H$ are strict storage functions.

\begin{proposition}  \label{P:strictQuadStorage}
Suppose that $S = S_H$ is a quadratic storage function for the system $\Sigma$ in  \eqref{dtsystem}.  Then $S_H$ is a
strict storage function for  $\Sigma$  if and only if $H$ is a bounded positive-semidefinite solution of the strict KYP-inequality \eqref{KYP2}.
Any such solution is in fact strictly positive-definite.
\end{proposition}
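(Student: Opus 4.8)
The plan is to exploit the same dictionary between the dissipation inequality along trajectories and an operator inequality on $\sbm{\cX\\ \cU}$ that underlies Proposition \ref{P:QuadStorage}, now in its strict form. The crucial observation is that, because any state $x\in\cX$ and input $u\in\cU$ occur as $(\bx(n),\bu(n))$ for some admissible trajectory (start the system at time $n$ in state $x$ and feed in $u$), the strict dissipation inequality \eqref{diss-strict2} holding along all trajectories is equivalent to the single pointwise inequality
\[
\|H^\half(Ax+Bu)\|^2+\|Cx+Du\|^2\le \|H^\half x\|^2+\|u\|^2-\delta\big(\|x\|^2+\|u\|^2\big)
\]
for all $x\in\cD(H^\half)$ and $u\in\cU$; and this in turn is precisely the assertion that $M^*\sbm{H&0\\0&I_\cY}M\preceq \sbm{H&0\\0&I_\cU}-\delta I$. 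Once this correspondence is set up, both implications reduce to matching the constant $\delta$ in \eqref{diss-strict2} with the gap constant in the strict inequality \eqref{KYP2}.

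For the forward direction I would first note that, since strict storage functions are by definition finite-valued, $S_H(x)<\infty$ for every $x$, forcing $\cD(H^\half)=\cX$. As $H$ is selfadjoint and positive-semidefinite, $H^\half$ is selfadjoint, hence closed, and a closed operator with full domain is bounded by the closed graph theorem; therefore $H=H^\half H^\half$ is bounded. With boundedness in hand, I would rearrange the strict dissipation inequality \eqref{diss-strict2}, evaluated at an arbitrary pair $(x,u)$ as above, into the displayed pointwise inequality and read off the strict KYP inequality \eqref{KYP2}, the strict sign coming from the $-\delta I$ term.

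For the converse, boundedness of $H$ immediately gives $\cD(H^\half)=\cX$, so $S_H$ maps into $[0,\infty)$ and satisfies $S_H(0)=0$. Since $\prec$ in \eqref{KYP2} is a strict inequality between bounded selfadjoint operators, there is a uniform $\delta>0$ (which may be shrunk to satisfy $\delta\le 1$) with $\sbm{H&0\\0&I_\cU}-M^*\sbm{H&0\\0&I_\cY}M\succeq\delta I$; unwinding this at $(x,u)=(\bx(n),\bu(n))$ along any trajectory yields exactly \eqref{diss-strict2}, so $S_H$ is a strict storage function. Finally, the strict positive-definiteness of any such $H$ follows by specializing the operator inequality to $u=0$, which gives $\langle Hx,x\rangle=\|H^\half x\|^2\ge \delta\|x\|^2$; alternatively one may invoke the coercivity estimate \eqref{coercive} from Proposition \ref{P:strictstorage-Schur}.

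The only step that is not pure algebra is the deduction of boundedness of $H$ in the forward direction, and this is where I expect the main (though modest) obstacle to lie: one must use carefully that a strict storage function is required to be everywhere finite, so that $\cD(H^\half)=\cX$ and the closed graph theorem applies. Without this finiteness convention the equivalence would fail outright, since an unbounded $H$ can never satisfy a bounded-operator inequality of the form \eqref{KYP2}.
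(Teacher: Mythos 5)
Your proposal is correct and follows essentially the same route as the paper: finiteness of a strict storage function forces $\cD(H^\half)=\cX$, the Closed Graph Theorem gives boundedness, and the strict dissipation inequality evaluated at an arbitrary pair $(x,u)$ is exactly the spatial form of \eqref{KYP2} with matching gap constant $\delta$. Your direct derivation of strict positive-definiteness by setting $u=0$ in the operator inequality is a small, valid shortcut where the paper instead invokes the coercivity statement of Proposition \ref{P:strictstorage-Schur}, but this does not change the substance of the argument.
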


\begin{proof}
Suppose that $S_H$ is a strict storage function
for $\Sigma$.  Then by definition $S_H(x) < \infty$ for all $x \in \cX$. Hence $\cD(H)=\cX$.  By the Closed Graph Theorem, it follows
that $H$ is bounded.  As a consequence of Proposition \ref{P:strictstorage-Schur}, $S_H$ is coercive and hence $H$ is strictly
positive-definite.  The strict dissipation inequality \eqref{diss-strict2} expressed in terms of $H$ and the system matrix
$\sbm{ A & B \\ C & D}$ becomes
$$
\| H^\half (Ax + Bu) \|^2 - \| H^\half x \|^2 + \delta \| x \|^2 \le
(1 - \delta) \| u \|^2 - \| C x + D u \|^2
$$
for all $x \in \cX$ and $u \in \cU$.  This can be expressed more succinctly as
\begin{align*}
&  \left\langle \begin{bmatrix} H & 0 \\ 0 & I \end{bmatrix} \begin{bmatrix} A & B \\ C & D \end{bmatrix} \begin{bmatrix} x \\ u \end{bmatrix},
\begin{bmatrix} A & B \\ C & D \end{bmatrix} \begin{bmatrix} x \\ u \end{bmatrix}  \right \rangle -
\left\langle \begin{bmatrix} H & 0 \\ 0 & I \end{bmatrix}
\begin{bmatrix} x \\ u \end{bmatrix}, \begin{bmatrix} x \\ u \end{bmatrix} \right\rangle  \\
& \quad \quad \quad \quad \le - \delta \left\langle \begin{bmatrix} x \\ u \end{bmatrix}, \begin{bmatrix} x \\ u \end{bmatrix} \right\rangle
\end{align*}
for all $x \in \cX$ and $u \in \cU$, for some $\delta > 0$.  This is just the spatial version of \eqref{KYP2}, so $H$ is a strictly positive-definite solution of
the strict KYP-inequality \eqref{KYP2}.  By reversing the steps one sees that $H \succeq 0$ being a solution of the strict KYP-inequality
\eqref{KYP2} implies that $S_H$ is a strict storage function.  As a consequence of Proposition \ref{P:strictstorage-Schur} we see that
then $S_H$ satisfies a coercivity condition \eqref{coercive}, so necessarily $H$ is strictly positive-definite.
\end{proof}

 \section{The available storage and required supply}\label{S:ASRS}

 In Proposition \ref{P:storage-Schur} we showed that the existence of
a storage function  (which is allowed to attain the value $+\infty$)
for a discrete-time linear system $\Si$ implies that the transfer
function $F_\Si$ associated with $\Si$ is equal to a Schur class
function on a neighborhood of 0. In this section we investigate the converse direction.
Specifically, we give explicit variational formulas for three storage functions, referred to as the
available storage function $S_a$ (defined in \eqref{Sa2}) the required supply function $S_r$
(defined in \eqref{Sr2}) and the  ``regularized'' version $\uS_r$ of the required supply
 (defined in \eqref{uSr2}).  Let $\bcU$ denote the space of
all functions $n \mapsto u(n)$ from the integers ${\mathbb Z}$ into
the input space $\cU$. Then $S_{a}$ is given by
\begin{equation}  \label{Sa2}
 S_{a}(x_{0}) =  \sup_{\bu \in \bcU,\,  n_{1} \ge 0}
\sum_{n=0}^{n_{1}}
   \left( \| \by(n) \|^{2} -  \|\bu(n)\|^{2}\right)
\end{equation}
with the supremum taken over all system trajectories $(\bu(n), \bx(n), \by(n))_{n \ge 0}$ with initialization
$\bx(0) = x_0$, while  $S_r$ is given by
\begin{equation}  \label{Sr2}
S_{r}(x_{0}) = \inf_{\bu \in \bcU, \, n_{-1} < 0}
\sum_{n=n_{-1}}^{-1} \left( \|\bu(n)\|^{2} - \| \by(n) \|^{2} \right)
\end{equation}
with the infimum taken over all system trajectories $(\bu(n), \bx(n), \by(n))_{n\ge n_{-1}}$
subject to the initialization condition $\bx(n_{-1}) =  0$ and the condition $\bx(0) = x_{0}$.

The proof that $S_a$ and $S_r$ are storage functions whenever $F_\Sigma$ is in the Schur class
requires the following preparatory lemma.  We shall use the following notation.
For an arbitrary Hilbert space $\cZ$, write $P_+$ and $P_-$ for the
orthogonal projections onto  $\ell^2_\cZ(\BZ_+)$ and $ \ell^2_\cZ(\BZ_-)$,
respectively, acting on $ \ell^2_\cZ(\BZ)$.
For integers $m\leq n$, we write $P_{[m,n]}$ for the orthogonal projection on
the subspace of sequences in $\ell^2_\cZ(\BZ)$ with support on the
coordinate positions $m, m+1,\ldots, n$.

\begin{lemma}  \label{L:prep}
Let $\Sigma$ be as in \eqref{dtsystem} and suppose that its transfer function $F_\Sigma$ is in the Schur class.
Then, for each system trajectory $(\bu(n), \bx(n), \by(n))_{n \ge 0}$ with initialization $\bx(0) = 0$, the inequality
\begin{equation}  \label{io-ineq}
\sum_{n=0}^N \| \by(n) \|^2 \le \sum_{n=0}^N \| \bu(n) \|^2
\end{equation}
holds for all $N \in {\mathbb Z}_+$.
\end{lemma}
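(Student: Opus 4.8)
The plan is to reduce the finite partial-sum inequality \eqref{io-ineq} to the global $\ell^2$-contractivity of the multiplication operator $M_{F_\Si}$, exploiting \emph{causality} to truncate the input after time $N$ without disturbing the first $N+1$ outputs, so that the Schur-class bound can be applied to a genuinely $\ell^2$ input.

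First I would fix $N\in\BZ_+$ and, given the trajectory $(\bu(n),\bx(n),\by(n))_{n\ge 0}$ with $\bx(0)=0$, introduce the truncated input $\wtil\bu\in\ell^2_\cU(\BZ_+)$ defined by $\wtil\bu(n)=\bu(n)$ for $0\le n\le N$ and $\wtil\bu(n)=0$ for $n>N$; being finitely supported, $\wtil\bu$ indeed lies in $\ell^2_\cU(\BZ_+)$. Running $\Si$ from the zero initial state with input $\wtil\bu$ produces a trajectory $(\wtil\bu(n),\wtil\bx(n),\wtil\by(n))_{n\ge 0}$. The key step is the causality bookkeeping: solving the recursion in \eqref{dtsystem} from $\bx(0)=0$ gives the closed form $\bx(n)=\sum_{k=0}^{n-1}A^{n-1-k}B\bu(k)$, so $\by(n)=C\bx(n)+D\bu(n)$ depends only on $\bu(0),\dots,\bu(n)$. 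Since $\wtil\bu$ agrees with $\bu$ on $\{0,\dots,N\}$, an induction on this formula shows $\wtil\by(n)=\by(n)$ for all $0\le n\le N$.

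Because $F_\Si\in\cS(\cU,\cY)$, the multiplication operator $M_{F_\Si}\colon H^2_\cU(\BD)\to H^2_\cY(\BD)$ is a contraction, and the zero-initial-condition computation carried out in the proof of Proposition \ref{P:storage-Schur} gives $\widehat{\wtil\by}=F_\Si\cdot\widehat{\wtil\bu}=M_{F_\Si}\widehat{\wtil\bu}$. Applying the Plancherel theorem together with $\|M_{F_\Si}\|\le 1$ then yields
\[
\sum_{n=0}^N\|\by(n)\|^2=\sum_{n=0}^N\|\wtil\by(n)\|^2\le\|\wtil\by\|_{\ell^2_\cY(\BZ_+)}^2\le\|\wtil\bu\|_{\ell^2_\cU(\BZ_+)}^2=\sum_{n=0}^N\|\bu(n)\|^2,
\]
where the first inequality simply discards the nonnegative tail $\sum_{n>N}\|\wtil\by(n)\|^2$ and the final equality uses that $\wtil\bu$ is supported in $\{0,\dots,N\}$. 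This is exactly \eqref{io-ineq}.

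I do not anticipate a serious obstacle here; the single point demanding care is precisely the causality argument guaranteeing $\wtil\by(n)=\by(n)$ on $\{0,\dots,N\}$, since it is this fact that licenses replacing the given input $\bu$ (which need not lie in $\ell^2$) by the truncation $\wtil\bu$, to which the Schur-class contractivity of $M_{F_\Si}$ genuinely applies.
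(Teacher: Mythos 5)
Your proof is correct and rests on exactly the same two ingredients as the paper's: causality of the input--output map (which lets you truncate the input after time $N$ without changing the outputs on $\{0,\dots,N\}$) and contractivity of that map coming from the Schur-class hypothesis. The only difference is packaging --- the paper phrases causality as the operator identity $P_{[0,N]}\fT_{F_\Si}=P_{[0,N]}\fT_{F_\Si}P_{[0,N]}$ for the lower-triangular Toeplitz operator and works in the time domain, whereas you truncate the input explicitly and pass through $M_{F_\Si}$ on $H^2$ via Plancherel; these are the same argument.
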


\begin{proof}
As we have already observed, the fact that $F_\Sigma$ is in the Schur class
$\cS(\cU, \cY)$ implies that the multiplication operator $M_{F_\Sigma}$
\eqref{mult-op} has norm at most $1$ as an operator from $L^2_\cU({\mathbb T})$ to $L^2_\cY({\mathbb T})$.
If we apply the inverse $Z$-transform to the full operator $M_{F_\Sigma}$, not just to the compression
${\mathbb H}_{F_\Sigma}$ as was done to arrive at the Hankel operator $\fH_{F_\Sigma}$ in \eqref{Hankel-matrix},
we get the {\em Laurent operator}
\begin{equation}\label{Laurent0}
\frakL_{F_{\Si}}=\mat{ccc|ccc}{
\ddots&\ddots&\ddots &\ddots&\ddots&\ddots\\
\ddots&F_{0}&0 &0&0&\ddots\\
\ddots&F_{1}&F_{0} &0&0&\ddots\\
\hline
\ddots&F_{2}&F_{1} &F_0&0&\ddots\\
\ddots&F_{3}&F_{2} &F_1&F_0&\ddots\\
\ddots&\ddots&\ddots&\ddots&\ddots&\ddots}: \ell^2_\cU(\BZ)\to
\ell^2_\cY(\BZ),
\end{equation}
where $F_0, F_1, F_2, \dots$ are the Taylor coefficients of $F_\Sigma$:
\begin{equation}   \label{Taylor}
  F_n = \begin{cases} D &\text{if } n=0 \\
         C A^{n-1}B &\text{if $n \ge 1$.}  \end{cases}
\end{equation}
It is convenient to write $\fL_{F_\Sigma}$ as a $2 \times 2$-block matrix with respect to the decomposition
$\ell^2_\cU({\mathbb Z}) = \sbm{ \ell^2_\cU({\mathbb Z}_-) \\ \ell^2_{\cU}({\mathbb Z}_+)}$ of the domain
and the decomposition $\ell^2_\cY({\mathbb Z}) = \sbm{ \ell^2_\cY({\mathbb Z}_-) \\ \ell^2_{\cY}({\mathbb Z}_+)}$  of the range;
the result is
\begin{equation}  \label{Laurent}
\frakL_{F_{\Si}}=\mat{c|c}{\wtil{\frakT}_{F_\Si}&0\\\hline
\frakH_{F_\Si}& \frakT_{F_\Si}}:\mat{c}{\ell^2_\cU(\BZ_-)\\
\ell^2_\cU(\BZ_+)}\to \mat{c}{\ell^2_\cY(\BZ_-)\\ \ell^2_\cY(\BZ_+)}.
\end{equation}
Here $\frakH_{F_\Si}:\ell^2_-(\cU)\to
\ell^2_+(\cY)$ denotes the Hankel operator associated with ${F_\Si}$
already introduced in \eqref{Hankel-matrix},
$\frakT_{F_\Si}:\ell^2_+(\cU)\to \ell^2_+(\cY)$ the Toeplitz operator
associated with ${F_\Si}$, and $\widetilde \frakT_{{F_\Si}}$ the
Toeplitz operator acting from $\ell^{2}_{\cU}({\mathbb Z}_{-})$ to
$\ell^{2}_{\cY}({\mathbb Z}_{-})$ associated with ${F_\Si}$.   From the assumption that $F_\Sigma$ is in the
Schur class $\cS(\cU, \cY)$, it follows that $M_{F_\Sigma}$ is contractive, and hence also each of the operators
$\widetilde \fT_{F_\Sigma}$, $\fH_{F_\Sigma}$, and $\fT_{F_\Sigma}$ is contractive.  From the lower triangular form of
$\fT_{F_\Sigma}$ we see in addition that $\fT_{F_\Sigma}$ has the {\em causality property}:
\begin{equation}\label{causal}
P_{[0,N]} \fT_{F_\Sigma} =  P_{[0,N]} \fT_{F_\Sigma} P_{[0,N]}\quad (N\ge 0).
\end{equation}
Now suppose that $(\bu(n), \bx(n), \by(n))_{n \ge 0}$ is a system trajectory on ${\mathbb Z}_+$ with initialization
$\bx(0) = 0$.  In this case the infinite matrix identity $\by = \fT_{F_\Sigma} \bu$ holds formally. For $N \in {\mathbb Z}_+$ we have $P_{[0,N]}\bu \in \ell^2_\cU(\BZ_+)$, and by the causality property
\[
P_{[0,N]} \fT_{F_\Sigma} P_{[0,N]}\bu
=P_{[0,N]} \fT_{F_\Sigma}\bu
=P_{[0,N]}\by.
\]
Since $\fT_{F_\Sigma}$ is contractive, so is $P_{[0,N]} \fT_{F_\Sigma} P_{[0,N]}$ and thus the above identity shows that
$
 \| P_{[0,N]} \by \| \le \| P_{[0,N]} \bu \|,
 $
or, equivalently,
 \begin{equation}  \label{causal-contraction}
 \sum_{n=0}^N \| \by(n) \|^2 \le \sum_{n=0}^N \| \bu(n) \|^2
 \end{equation}
holds for each system trajectory $(\bu(n), \bx(n), \by(n))_{n\ge 0}$
 with $\bx(0) = 0$.
 \end{proof}

The proof of the following result is an adaptation of the proofs
of Theorems 1 and 2 for the continuous time setting in \cite{Wil72a}.

\begin{proposition}\label{P:SaSr}
Assume that the discrete-time linear system $\Si$ has a transfer function
$F_\Sigma$ which has an analytic continuation to a function in the Schur class $\cS(\cU,\cY)$.
Define $S_{a}$ and $S_{r}$ by \eqref{Sa2} and \eqref{Sr2}. Then
\begin{enumerate}
\item[(1)] $S_{a}$ is a storage function,
\item[(2)] $S_{r}$ is a storage function, and
\item[(3)] for each storage function $S$ for $\Sigma$ we have
$$
 S_a(x_0) \le  S(x_0) \le S_r(x_0) \text{ for all } x_0 \in \cX.
$$
\end{enumerate}
\end{proposition}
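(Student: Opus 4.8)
\emph{Proof proposal.} The plan is to handle all three assertions by the classical Willems bookkeeping, using Lemma \ref{L:prep} to fix the signs, the translation invariance of $\Si$ to reduce the dissipation inequality \eqref{disineq} to a single time step, and a careful propagation of finiteness to cope with the value $+\infty$. First I would dispose of nonnegativity and the normalization \eqref{normalization'}. For $S_a$, the choice $n_1=0$, $\bu(0)=0$ in \eqref{Sa2} exhibits the admissible value $\|Cx_0\|^2\ge 0$, so $S_a(x_0)\ge 0$; when $x_0=0$, Lemma \ref{L:prep} forces every admissible sum to be $\le 0$, giving $S_a(0)=0$. For $S_r$, Lemma \ref{L:prep} applied to a trajectory initialized at $n_{-1}$ shows every admissible sum in \eqref{Sr2} is $\ge 0$, whence $S_r\ge 0$ (with the convention $\inf\emptyset=+\infty$ for non-reachable $x_0$, which keeps $S_r$ valued in $[0,\infty]$); the one-step trajectory $n_{-1}=-1$, $\bu(-1)=0$ steers $0$ to $\bx(0)=0$ at zero cost, so $S_r(0)=0$.

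The heart of (1) and (2) are a prepending and an appending argument, respectively. Fix a single transition $x\mapsto x^+=Ax+Bu$ with $y=Cx+Du$; by translation invariance it suffices to verify \eqref{disineq} for this step. For $S_a$, given any trajectory emanating from $x^+$ with horizon $n_1$, I would prepend the transition $(u,x,y)$ to produce a trajectory emanating from $x$ of horizon $n_1+1$ whose running sum is $(\|y\|^2-\|u\|^2)$ plus the original sum; bounding this by the supremum $S_a(x)$ and then taking the supremum over the appended trajectories yields $S_a(x^+)\le S_a(x)+\|u\|^2-\|y\|^2$. For $S_r$ the argument is dual: take a trajectory steering $0$ to $x$ realizing a value near $S_r(x)$, append the control $u$ to extend it to one steering $0$ to $x^+$, shift time by one, and pass to the infimum defining $S_r(x^+)$; the cost changes by exactly $\|u\|^2-\|y\|^2$, giving $S_r(x^+)\le S_r(x)+\|u\|^2-\|y\|^2$. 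In both cases, when the right-hand state function is $+\infty$ the inequality is vacuous, and when it is finite the same computation forces the left-hand value to be finite, so the $[0,\infty]$-valued inequality needs no separate case analysis.

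For (3), let $S$ be an arbitrary storage function. If $S(x_0)=\infty$ the lower bound is immediate; otherwise \eqref{disineq} keeps $S(\bx(n))$ finite along any trajectory from $x_0$ and telescopes to $S(\bx(n_1+1))-S(x_0)\le\sum_{n=0}^{n_1}\bigl(\|\bu(n)\|^2-\|\by(n)\|^2\bigr)$. Using $S\ge 0$ and rearranging bounds every admissible sum in \eqref{Sa2} by $S(x_0)$, so $S_a(x_0)\le S(x_0)$. Symmetrically, for any trajectory steering $0$ to $x_0$, telescoping \eqref{disineq} from $n_{-1}$ to $-1$ and invoking $S(0)=0$ gives $S(x_0)\le\sum_{n=n_{-1}}^{-1}\bigl(\|\bu(n)\|^2-\|\by(n)\|^2\bigr)$; taking the infimum yields $S(x_0)\le S_r(x_0)$.

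The hard part will not be any single estimate but the concatenation-and-shift bookkeeping: verifying that the spliced input/state/output signals are genuine system trajectories of $\Si$, indexing the prepended and appended horizons correctly, and propagating finiteness of $S_a$, $S_r$, or $S$ step by step so that the telescoping and the sup/inf manipulations remain legitimate once $+\infty$ is an admissible value. These are precisely the points at which the discrete-time, infinite-dimensional, $[0,\infty]$-valued adaptation of Willems' continuous-time argument must be carried out explicitly rather than quoted.
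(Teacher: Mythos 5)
Your proposal is correct and follows essentially the same route as the paper's proof: Lemma \ref{L:prep} for the sign and normalization facts, translation invariance to reduce the dissipation inequality to a single step, the prepend/append comparison of the variational problems for $S_a$ and $S_r$ respectively, and telescoping of \eqref{disineq} (together with the $\inf\emptyset=+\infty$ convention for non-reachable states) for the extremality in (3). The points you flag as requiring care — splicing trajectories, index bookkeeping, and propagation of finiteness — are exactly the ones the paper writes out explicitly, and your treatment of them is sound.
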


\begin{proof}
The proof consists of three parts,
corresponding to the three assertions of the proposition.

\smallskip

{(1)}
To see that $S_{a}(x_{0}) \ge 0$ for all $x_{0}\in\cX$,  choose $\bx(0)
= x_{0}$ and $\bu(n) = 0$ for $n \ge 0$ to generate a system
trajectory $(\bu(n), \bx(n), \by(n))_{n \ge 0}$ such that
$\sum_{n=0}^{n_{1}} ( \| \by(n) \|^{2} - \| \bu(n) \|^{2}) =
\sum_{n=0}^{n_{1}} \| \by(n) \|^{2} \ge 0$ for all $n_{1} \ge 0$.
From the
definition \eqref{Sa2}, we see that $S_{a}(x_{0}) \ge 0$.

By Lemma \ref{L:prep}, each system trajectory $(\bu(n), \bx(n), \by(n))_{n \ge 0}$
with initialization $\bx(0) = 0$ satisfies the inequality
\[
\sum_{n=0}^{n_1} \| \by(n) \|^{2}_{\cY} \le \sum_{n=0}^{n_1} \| \bu(n)
\|^{2}_{\cU}\quad (n_1\in\BZ_+).
\]
This observation leads to the conclusion that $S_{a}(0) \le 0$. Hence
$S_{a}(0)=0$ and thus $S_{a}$ satisfies the normalization
\eqref{normalization'}.

Now let $\{\wtil\bu(n), \wtil\bx(n), \wtil\by(n)\}_{n \ge N_0}$ be any
system trajectory initiated at some $N_0\in\BZ$.   We wish to show that this trajectory satisfies the
dissipation inequality \eqref{disineq}.  It is convenient to rewrite
this condition in the form
\[
\| \wtil\by(n) \|^{2}_{\cY} - \| \wtil\bu(n) \|^{2}_{\cU} +
S_a(\wtil\bx(n+1)) \le S_a(\wtil\bx(n))\quad (n\in \BZ).
\]
By translation invariance of the system equations \eqref{dtsystem},
without loss of generality we may take $n=0$, so we need to show
\begin{equation}   \label{distoshow}
\| \wtil\by(0) \|^{2}_{\cY} - \| \wtil\bu(0) \|^{2}_{\cU} +
S_a(\wtil\bx(1)) \le S_a(\wtil\bx(0)).
\end{equation}
We rewrite the definition \eqref{Sa2} for $S_a(\wtil\bx(1))$ in the
form
\[
S_a(\wtil\bx(1)) = \sup_{\bu \in \bcU, n_1\geq 0} \sum_{n=0}^{n_{1}}
\left( \| \by(n)
\|^{2}_{\cY} - \| \bu(n) \|^{2}_{\cU} \right),
\]
where the system trajectory $(\bu(n),\bx(n),\by(n))_{n \ge 0}$
is subject to the
initializa\-tion $\bx(0)=\wtil\bx(1)$. Again making use of the
translation invariance of the system equations, we may rewrite this
in the form
\[
S_a(\wtil\bx(1)) = \sup_{\bu \in \bcU, n_1\geq 1} \sum_{n=1}^{n_{1}}
\left( \| \by(n)
\|^{2}_{\cY} - \| \bu(n) \|^{2}_{\cU} \right),
\]
where $(\bu(n),\bx(n),\by(n))_{n \ge 0}$ is a system trajectory with
initialization now given by $\bx(1)=\wtil\bx(1)$. Substituting this expression for
$S(\wtil\bx(1))$, the left hand side of  \eqref{distoshow} reads
\[
\| \wtil\by(0) \|^{2}_{\cY} - \| \wtil\bu(0) \|^{2}_{\cU} +
\sup_{\bu \in \bcU, n_1\geq 1} \sum_{n=1}^{n_{1}} \left( \| \by(n)
\|^{2}_{\cY} - \| \bu(n) \|^{2}_{\cU} \right).
\]
This quantity indeed is bounded above by
\[
S_a(\wtil\bx(0))= \sup_{\bu \in \bcU, n_1\geq 0} \sum_{n=0}^{n_{1}}
\left( \| \by(n)
\|^{2}_{\cY} - \| \bu(n) \|^{2}_{\cU} \right),
\]
with $(\bu(n),\bx(n),\by(n))_{n \ge 0}$ a system trajectory subject to initialization $\wtil\bx(0)=\bx(0)$. Hence the
inequality \eqref{distoshow} follows as required, and $S_{a}$ is a
storage function for $\Sigma$.

\smallskip

{(2)}
Let $(\bu(n), \bx(n), \by(n))_{n \ge n_-1}$ be a system trajectory with zero-initial\-ization of the state at $n_{-1} < 0$, subject also to $\bx(0)=x_0$. Applying the result of Lemma \ref{L:prep} to this system trajectory, using the translation invariance property of $\Si$ to get a sum in \eqref{io-ineq} starting at $n_{-1}$ and ending at 0, it follows that $S_{r}(x_{0}) \ge 0$ for all $x_{0}$ in $\Rea (A|B)$.
In case $x_0\not\in\Rea(A|B)$, i.e.,  $x_{0}$ is not reachable in finitely many steps via some input signal
$\bu(n)$ ($n_{-1}\le n < 0$) with $\bx(n_{-1}) = 0$, then the definition of $S_r$ in \eqref{Sr2} gives us $S_{r}(x) = +\infty \ge 0$.
By choosing $n_{-1} = -1$ with $\bu(-1) =
0$, we see that $S_{r}(0) \le 0$. Since $S_{r}(x_{0}) \ge 0$
for each $x_{0} \in \cX$, it follows that $S_{r}$  also satisfies the
normalization \eqref{normalization'}.

An argument similar to that used in part 1 of the proof shows
that $S_{r}$ satisfies \eqref{disineq}. Indeed, note that it suffices
to show
that for each system trajectory $\{\wtil\bu(n), \wtil\bx(n),
\wtil\by(n)\}_{n \ge 0}$ we have
\begin{align}  S_{r}(\wtil\bx(1)) & \le \| \wtil\bu(0) \|^{2}_{\cU} - \|
\wtil\by(0)\|^{2}_{\cY}+ S_{r}(\wtil\bx(0))  \label{toshow2} \\
& = \inf_{\bu \in \bcU, \, n_{-1} < 0} \left\{ \| \wtil\bu(0) \|^{2}_{\cU} - \|
\wtil\by(0)\|^{2}_{\cY}+
 \sum_{n=n_{-1}}^{-1}  \left( \| \bu(n) \|^{2}_{\cU} - \| \by(n)
\|^{2}_{\cY} \right)\right\}
 \notag
\end{align}
where $(\bu(n), \bx(n), \by(n))_{n\ge n_{-1}}$ is a system trajectory subject to the initial condition
$\bx(n_{-1}) = 0$ and the terminal condition $\bx(0) = \widetilde x(0)$.
Rewrite the definition of $S_{r}(\wtil\bx(1))$ as
\[
S_{r}(\wtil\bx(1)) = \inf_{\bu \in \bcU, \, n_{-1}< 1}
\sum_{n=n_{-1}}^{0} \left( \| \bu(n) \|^{2}_{\cU} - \| \by(n)
\|^{2}_{\cY} \right),
\]
with the system trajectory $(\bu(n),\bx(n),\by(n))_{n\in\BZ}$  subject
to the initial and terminal
conditions $\bx(n_{-1}) =  0$ and $\bx(1) = \wtil\bx(1)$.  Now recognize the
argument of the $\inf$ in the right-hand side of \eqref{toshow2} as  part of the competition in the
infimum defining $S_r(\wtil\bx(1))$  to deduce the inequality \eqref{toshow2}.

\smallskip

{(3)}
Let $S$ be any storage function for $\Sigma$ and $(\bu(n), \bx(n), \by(n))_{n \ge 0}$ any system
trajectory with initialization $\bx(0) = x_0$.  Iteration of the dissipation inequality \eqref{disineq} for $S$ along the system trajectory $(\bu(n), \bx(n), \by(n))_{n \ge 0}$  as in the proof of Lemma \ref{L:finH2}
yields
$$
0 \le S(n_1 + 1) \le S(x_0) + \sum_{n=0}^{n_1} \left( \| \bu(n) \|^2 - \| \by(n) \|^2 \right)
$$
or
$$
\sum_{n=0}^{n_1} \left( \| \by(n) \|^2 - \| \bu(n) \|^2 \right) \le S(x_0).
$$
Taking the supremum in the left-hand side of the above inequality over all such system trajectories $(\bu(n), \bx(n), \by(n))_{n \ge 0}$
and all $n_1 \ge 0$ yields $S_a(x_0) \le S(x_0)$ and the first part of (3) is verified.

Next let $x_{0}\in\cX$ be arbitrary.
If $(\bu(n), \bx(n), \by(n))_{n\ge n_{-1}}$ is
any system trajectory with state-initializa\-tion $x(n_{-1}) = 0$
and $x(0) = x_{0}$, applying Lemma \ref{L:finH2} with $N_0=n_{-1}$ and
$N=-1-n_{-1}$ gives us that
\begin{equation}   \label{Stilde-ineq}
S(x_{0})  \le \sum_{n=n_{-1}}^{-1} \left( \|
\bu(n)\|^{2}_{\cU} - \| \by(n) \|^{2}_{\cY} \right).
\end{equation}

Taking the infimum of the right-hand side over all such system
trajectories gives us $S(x_{0}) \le S_{r}(x_{0})$. Here we
implicitly assumed that the state $x_{0} \in \cX$ is reachable.  If
$x_{0}$ is not reachable, there are no such system trajectories, and
taking the infimum over an empty set leads to $S_{r}(x_{0}) =
\infty$, in which case $S(x_0)\le S_{r}(x_{0})$ is also valid. Hence $S(x_{0}) \le S_{r}(x_{0})$ holds for all possible
$x_{0}\in\cX$.  This completes the verification of the second part of (3).
\end{proof}

Combining Proposition \ref{P:SaSr} with Proposition \ref{P:storage-Schur} leads to the following.

\begin{corollary}  \label{C:storageSchur}  A discrete-time linear system $\Sigma$ in \eqref{dtsystem} has a transfer function $F_\Sigma$ with an analytic continuation in the Schur class if and only if $\Sigma$ has a storage function $S$.
\end{corollary}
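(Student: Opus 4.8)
The plan is to obtain the equivalence by pairing the two one-directional implications already proved in this section, one per direction of the biconditional; no fresh construction is required, since the corollary merely packages Propositions \ref{P:storage-Schur} and \ref{P:SaSr}.

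First I would handle the implication that the existence of a storage function forces $F_\Sigma$ into the Schur class. This is precisely the content of Proposition \ref{P:storage-Schur}: given a storage function $S$ (possibly attaining $+\infty$), running $\Sigma$ from the zero initial state with an arbitrary input in $\ell^2_\cU(\BZ_+)$ and summing the dissipation inequality yields $\|\by\|_{\ell^2}\le\|\bu\|_{\ell^2}$, whence the multiplication operator $M_{F_\Sigma}$ is a contraction and $F_\Sigma$ continues analytically to a function in $\cS(\cU,\cY)$. I would simply invoke this proposition.

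For the converse, I would appeal to Proposition \ref{P:SaSr}: under the standing hypothesis that $F_\Sigma$ continues to a Schur-class function, its item (1) asserts that the available storage $S_a$ defined in \eqref{Sa2} is a storage function, so $S_a$ itself serves as the witness $S$ whose existence is to be shown. (Equally, item (2) would let one take the required supply $S_r$ from \eqref{Sr2}.) Stringing the two directions together gives the claimed equivalence.

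Since each half is a direct citation, there is no genuine obstacle in the corollary itself; the one subtlety worth recording is that the storage function furnished in the converse direction is extended-real-valued and may take the value $+\infty$, in keeping with the definition adopted at the start of Section \ref{S:Storage}, and that the substantive work sits upstream in Proposition \ref{P:SaSr}, which rests on the Toeplitz-contractivity argument of Lemma \ref{L:prep}, rather than in the present statement.
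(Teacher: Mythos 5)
Your proposal is correct and matches the paper's own proof exactly: the paper cites Proposition \ref{P:storage-Schur} for the direction from storage function to Schur class, and Proposition \ref{P:SaSr} to take $S=S_a$ or $S=S_r$ for the converse. Your additional remark that the witness may be extended-real-valued is consistent with the definition of storage function adopted in Section \ref{S:Storage} and raises no issue.
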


\begin{proof}
The sufficiency is Proposition \ref{P:storage-Schur}.  For the necessity direction, by Proposition \ref{P:SaSr}
we may choose $S$ equal to either $S_a$ or $S_r$.
\end{proof}

We next impose a minimality assumption on $\Sigma$ and in addition
assume that $F_{\Sigma}$ has an analytic continuation in the Schur
class $\cS(\cU, \cY)$, i.e., we make the following assumptions:
\begin{equation} \label{A}
 \hspace*{-.15cm}    \left\{ \!\!\! \begin{array}{l}
 \mbox{\em $\Si$ is minimal, i.e., $(C,A)$ is observable and $(A,B)$ is
controllable,}  \\
 \mbox{\em  and $F_{\Sigma}$ has an analytic continuation to a function in
$\cS(\cU, \cY)$.}
\end{array} \right.
\end{equation}

Our next goal is to understand storage functions from a more operator-theoretic
point of view.  We first need some preliminaries.

Recall the Laurent operator $\frakL_{F_\Si}$ in \eqref{Laurent0}.
From
the $2 \times 2$-block form for $\frakL_{{F_\Si}}$ in
\eqref{Laurent}, we see that
\begin{equation}\label{LFids}
\begin{aligned}
I - \frakL_{{F_\Si}} \frakL_{{F_\Si}}^{*} &= \begin{bmatrix}
D_{\widetilde
\frakT_{{F_\Si}}^{*}}^{2} & - \widetilde \frakT_{{F_\Si}}
\frakH_{{F_\Si}}^{*} \\
 - \frakH_{{F_\Si}} \widetilde \frakT_{{F_\Si}}^{*} &
 D_{\frakT_{{F_\Si}}^{*}}^{2} - \frakH_{{F_\Si}} \frakH_{{F_\Si}}^{*}
\end{bmatrix};\\
I - \frakL_{{F_\Si}}^{*} \frakL_{{F_\Si}}
&= \begin{bmatrix}
D_{\widetilde
  \frakT_{{F_\Si}}}^{2} - \frakH_{{F_\Si}}^{*} \frakH_{{F_\Si}} &
-\frakH_{F_\Si}^*\frakT_{{F_\Si}}\\
  -\frakT_{{F_\Si}}^* \frakH_{F_\Si} &  D_{\frakT_{{F_\Si}}}^{2}
\end{bmatrix}.
\end{aligned}
\end{equation}
where in general we use the notation $D_{X}$ for the defect operator
$D_{X} = (I - X^{*} X)^{\half}$ of a contraction operator $X$. Since
${F_\Si}$ is assumed to be a Schur class function, $\frakT_{F_\Si}$
and $\widetilde \frakT_{{F_\Si}}$ are contractions, and hence
$D_{\frakT_{{F_\Si}}}$, $D_{ \frakT_{{F_\Si}}^{*}}$, $D_{\widetilde
\frakT_{{F_\Si}}}$ and $D_{\widetilde
\frakT_{{F_\Si}}^{*}}$ are well defined.

\begin{lemma}\label{L:SaSrOpForm}
Let the discrete-time linear system $\Si$ in \eqref{dtsystem} satisfy
the assumptions \eqref{A}. The available storage function $S_a$ and
required supply function $S_r$ can then be written in operator form as
\begin{align}
\label{SaOpForm}
S_{a}(x_{0}) &= \!\! \sup_{ \bu \in \ell^{2}_{\cU}({\mathbb Z}_{+})}
\| \bW_{o} x_{0} + \frakT_{{F_\Si}} \bu
\|^{2}_{\ell^{2}_{\cY}({\mathbb
 Z}_{+})} - \| \bu \|^{2}_{\ell^{2}_{\cU}({\mathbb Z}_{+})}\ (x_0\in \cD(\bW_o)) \\
S_r(x_0)&=\inf_{\bu\in\ell_{\tu{fin},\cU}(\BZ_-),\, x_0=\bW_c \bu}\|D_{\wtil\fT_{F_\Si}}\bu\|^2\quad   (x_0\in\cX),
\label{SrOpForm}
\end{align}
and $S_{a}(x_{0})=+\infty$ for $x_0\not\in\cD(\bW_o)$. Here $\bW_o$ and $\bW_c$ are the observability and controllability operators defined via \eqref{bWo1}--\eqref{bWc*2} and $\ell_{\tu{fin},\cU}(\BZ_-)$ is the linear manifold of finitely supported sequences in $\ell^2_\cU(\BZ_-)$. In particular, $S_r(x_0)<\infty$ if and only if $x_0\in\Rea(A|B)$.
\end{lemma}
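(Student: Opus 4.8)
The plan is to connect the variational definitions \eqref{Sa2} and \eqref{Sr2} with the Laurent/Toeplitz calculus introduced in Lemma \ref{L:prep}, using the superposition principle for the linear system $\Si$. The key preliminary observation is that along any trajectory $(\bu(n),\bx(n),\by(n))_{n\ge 0}$ with $\bx(0)=x_0$, linearity of \eqref{dtsystem} splits the output into its free part $\{CA^n x_0\}_{n\ge 0}=\bW_o x_0$ and its forced part $\fT_{F_\Si}\bu$ (the latter being exactly the zero-initial-state output computed in the proof of Proposition \ref{P:storage-Schur}), so that $\by=\bW_o x_0+\fT_{F_\Si}\bu$ whenever $x_0\in\cD(\bW_o)$ and $\bu\in\ell^2_\cU(\BZ_+)$. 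Dually, for a trajectory started from $\bx(n_{-1})=0$ and driven by $\bu'\in\ell_{\tu{fin},\cU}(\BZ_-)$, the block structure \eqref{Laurent} of $\frakL_{F_\Si}$ shows that the negative-time output equals $\wtil\fT_{F_\Si}\bu'$.

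For \eqref{SaOpForm} I would first dispose of the case $x_0\notin\cD(\bW_o)$: running $\Si$ from $x_0$ with zero input gives $\by=\bW_o x_0\notin\ell^2_\cY(\BZ_+)$, so $\sum_{n=0}^{n_1}\|\by(n)\|^2\to\infty$ and $S_a(x_0)=+\infty$. For $x_0\in\cD(\bW_o)$ I prove the two inequalities separately. For $S_a(x_0)\le$ RHS, I use causality: given any $\bu\in\bcU$ and $n_1\ge 0$, changing $\bu$ beyond $n_1$ leaves $\by(0),\dots,\by(n_1)$ fixed, so I may replace $\bu$ by its truncation $P_{[0,n_1]}\bu\in\ell^2_\cU(\BZ_+)$; then $\sum_{n=0}^{n_1}(\|\by(n)\|^2-\|\bu(n)\|^2)=\|P_{[0,n_1]}\by\|^2-\|\bu\|^2\le\|\bW_o x_0+\fT_{F_\Si}\bu\|^2-\|\bu\|^2$. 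For the reverse inequality I take an arbitrary $\bu\in\ell^2_\cU(\BZ_+)$, feed the truncations $\bu^{(N)}=P_{[0,N]}\bu$ into $\Si$, and apply the causality property \eqref{causal} in the form $P_{[0,N]}\fT_{F_\Si}\bu^{(N)}=P_{[0,N]}\fT_{F_\Si}\bu$ to identify $P_{[0,N]}\by^{(N)}=P_{[0,N]}(\bW_o x_0+\fT_{F_\Si}\bu)$; letting $N\to\infty$ exhibits $\|\bW_o x_0+\fT_{F_\Si}\bu\|^2-\|\bu\|^2$ as a limit of admissible competitors in \eqref{Sa2}.

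For \eqref{SrOpForm}, Proposition \ref{P:Wc}(1) supplies the exact correspondence between admissible trajectories and inputs: a trajectory started from $\bx(n_{-1})=0$ and reaching $\bx(0)=x_0$ corresponds, via zero-extension, to a $\bu'\in\ell_{\tu{fin},\cU}(\BZ_-)$ with $\bW_c\bu'=x_0$, and conversely. Since $\bW_c\,\ell_{\tu{fin},\cU}(\BZ_-)=\Rea(A|B)$ (Proposition \ref{P:WcWo'}(4)), the constraint set is nonempty precisely when $x_0\in\Rea(A|B)$, matching the degeneracy already recorded in the proof of Proposition \ref{P:SaSr}(2). Along such a trajectory the output vanishes before $n_{-1}$ and equals $\wtil\fT_{F_\Si}\bu'$ on $\BZ_-$, so the cost in \eqref{Sr2} becomes $\|\bu'\|^2-\|\wtil\fT_{F_\Si}\bu'\|^2=\langle(I-\wtil\fT_{F_\Si}^*\wtil\fT_{F_\Si})\bu',\bu'\rangle=\|D_{\wtil\fT_{F_\Si}}\bu'\|^2$; taking the infimum over all such $\bu'$ yields \eqref{SrOpForm}, and the closing equivalence $S_r(x_0)<\infty\Leftrightarrow x_0\in\Rea(A|B)$ then follows since $D_{\wtil\fT_{F_\Si}}$ is bounded.

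I expect the main obstacle to be the bookkeeping in the $S_a$ argument, where one must simultaneously manage the truncation of the input (to land in $\ell^2_\cU(\BZ_+)$) and the truncation of the summation horizon $n_1$, invoking causality at just the right place so that the finite-horizon competitors in \eqref{Sa2} converge to the $\ell^2$-supremand $\|\bW_o x_0+\fT_{F_\Si}\bu\|^2-\|\bu\|^2$. The $S_r$ identity is comparatively direct once Proposition \ref{P:Wc}(1) and the block decomposition \eqref{Laurent} are in hand, the only real content being the passage from the trajectory cost to the defect-operator expression.
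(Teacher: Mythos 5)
Your proposal is correct and follows essentially the same route as the paper: the superposition identity $\by=\bW_o x_0+\fT_{F_\Si}\bu$ together with the causality property \eqref{causal} and truncation to handle $S_a$, and Proposition \ref{P:Wc}(1) plus the block structure \eqref{Laurent} to rewrite the $S_r$ constraint as $\bW_c\bu'=x_0$ and the cost as $\|D_{\wtil\fT_{F_\Si}}\bu'\|^2$. The only cosmetic difference is that you obtain the supremum over all of $\ell^2_\cU(\BZ_+)$ by truncating an arbitrary $\ell^2$ input directly, whereas the paper first reduces to finitely supported inputs and then invokes density of $\ell_{\tu{fin},\cU}(\BZ_+)$ and boundedness of $\fT_{F_\Si}$.
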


\begin{proof}
We shall use the notation $P_\pm$ and $P_{[m,n]}$ as introduced in the
discussion immediately preceding the statement of Lemma \ref{L:prep}.

We start with $S_{a}$. For each system trajectory $(\bu(n),\bx(n),\by(n))_{n \ge 0}$ with initialization $\bx(0) = x_0$ and with $\bu\in \ell^2_\cU(\BZ_+)$ by linearity we have
\[
\by= \bW_o x_0 + \fT_{F_\Sigma} \bu.
\]
Now note that, for each system trajectory  $(\bu(n),\bx(n),\by(n))_{n \ge 0}$ with initialization $\bx(0) = x_0$ but with $\bu$
not necessarily in $\ell^2_\cU(\BZ_+)$ and with $n_1\geq 0$, by the causality property \eqref{causal}, as in the proof of
Lemma \ref{L:prep} we see that  we can replace $\bu$ with $P_{[0,n_1]}\bu\in \ell_{{\rm fin}, \cU}({\mathbb Z}_{+})
\subset \ell^2_\cU (\BZ_+)$ within the supremum in \eqref{Sa2} without changing the value.
Therefore, the value of $S_a$ at $x_0$ can be rewritten in operator form as
\begin{align}
&S_{a}(x_{0})=\notag \\
&=\!\!\! \sup_{\bu \in \ell_{{\rm fin}, \cU}({\mathbb Z}_{+}), \,
 n_{1} \ge 0} \| P_{[0,n_{1}]} (\bW_{o}x_{0} + \frakT_{{F_\Si}} \bu
)\|^{2}_{\ell^{2}_{\cY}({\mathbb Z}_{+})}
 - \| P_{[0, n_{1}]} \bu
 \|^{2}_{\ell^{2}_{\cU}({\mathbb Z}_{+})} \label{sup'-pre}
\end{align}
where we use the notation $\ell_{{\rm fin}, \cU}({\mathbb Z}_+)$ for $\cU$-valued sequences
on ${\mathbb Z}_+$ of finite support.

If $x_0 \notin \cD(\bW_o)$ so that $\bW_o x_0 \notin \ell^2_\cY({\mathbb Z}_+)$, the above formulas are to be interpreted algebraically, and we may choose $\bu = 0$ and take the limit as $n_1 \to \infty$ to see that $\bW_o(x_0) = +\infty$.

Now assume $x_0\in \cD(\bW_o)$. Fix $\bu \in \ell_{{\rm fin}, \cU}({\mathbb Z}_+)$ and take the limit as $n_1 \to +\infty$ in the right hand side
 of  \eqref{sup'-pre} to see that an equivalent expression for $S_a(x_0)$ is
 $$
 S_a(x_0) = \sup_{\bu \in \ell_{{\rm fin}, \cU}({\mathbb Z}_+)} \| \bW_o x_0 + \fT_{F_\Sigma} \bu \|^2 - \| \bu \|^2.
 $$
Since $\ell_{{\rm fin}, \cU}({\mathbb Z}_+)$ is dense in $\ell^2_\cU({\mathbb Z}_+)$ and $\fT_{F_\Sigma}$ is a bounded
operator, we see that another equivalent expression for $S_a(x_0)$ is \eqref{SaOpForm}.
This completes the verification of \eqref{SaOpForm}.

We next look at $S_r$.  Let $(\bu(n), \bx(n), \by(n))_{n \ge n_{-1}}$ be any system trajectory with initialization $x(n_{-1}) = 0$ for some $n_{-1}<0$.
Let us identify $\bu$ with an element $\bu \in \ell_{{\rm fin}, \cU}({\mathbb Z}_-)$ by ignoring the values of $\bu$ on
${\mathbb Z}_+$ and defining $\bu(n) = 0$ for $n < n_{-1}$.  Then, as a consequence of item (1) in
Proposition \ref{P:Wc}, the constraint $\bx(0) = x_0$ in \eqref{Sr2} can be written in operator form as $\bW_c \bu = x_0$.
Furthermore, since $(\bu(n), \bx(n), \by(n))_{n \ge n_{-1}}$ is a system trajectory with zero state initialization at $n_{-1}$,
it follows that
$$
   \by|_{{\mathbb Z}_-} = \widetilde \fT_{F_\Sigma} \bu.
$$
We conclude that a formula for $S_r$ equivalent to \eqref{Sr2} is
$$
S_r(x_0) = \inf_{ \bu \in \ell^2_{{\rm fin}, \cU}({\mathbb Z}_-) \colon \bW_c \bu = x_0}
\| \bu \|^2 - \| \widetilde \fT_{F_\Sigma} \bu \|^2
$$
which in turn has the more succinct formulation \eqref{SrOpForm}. If $x_0 \in \Rea(A|B)$, then the infimum in \eqref{SrOpForm} is taken over a nonempty set, so that $S_r(x_0)<\infty$. On the other hand, if $x_0 \not\in \Rea(A|B)$, then the infimum is taken over an empty set, so that $S_r(x_0)=\infty$.
\end{proof}

To compute storage functions more explicitly for the case where assumptions \eqref{A} are
in place, it will be convenient to restrict to what we shall call {\em $\ell^2$-regular storage functions} $S$,
namely, storage functions $S$ which assume finite values on $\im \bW_c$:
\begin{equation}  \label{reg-storage}
 x_0 = \bW_c \bu \text{ where } \bu \in \cD(\bW_c) \Rightarrow S(x_0) < \infty.
 \end{equation}
We shall see in the next result that $S_a$ is $\ell^2$-regular.  However, unless if $\Rea(A|B)$ is equal to the range of
$\bW_c$, the required supply $S_r$ will  not be $\ell^2$-regular (by the last assertion of Lemma \ref{L:SaSrOpForm}).

To remedy this situation, we introduce the following modification $\uS_r$ of the required supply $S_r$, which we shall call the {\em $\ell^2$-regularized required supply}:
\begin{equation}  \label{uSr2}
 \uS_r(x_0) = \inf_{\bu \in \cD(\bW_c) \colon \bW_c \bu = x_0} \sum_{n=-\infty}^{-1}  \left( \| \bu(n) \|^2 - \| \by(n) \|^2 \right)
 \end{equation}
 where $\bu \in \ell^2_\cU({\mathbb Z}_-)$ determines $\by \in \ell^2_\cY({\mathbb Z}_-)$ via the system input/output map:\  $\by = \widetilde \fT_{F_\Sigma} \bu$.  Thus  formula \eqref{uSr2} can be written more succinctly in operator form as
 \begin{align}
  \uS_r(x_0) & = \inf_{\bu \in \cD(\bW_c) \colon \bW_c \bu = x_0}
   \| \bu \|^2_{\ell^2_\cU({\mathbb Z}_-)} - \| \widetilde \fT_{F_\Sigma} \bu \|^2_{\ell^2_\cY({\mathbb Z}_-)}
    \notag  \\ & =     \inf_{\bu \in \cD(\bW_{c}), \, \bW_{c} \bu = x_{0}}
\|  D_{\widetilde \frakT_{{F_\Si}}} \bu
\|^{2}_{\ell^{2}_{\cY}({\mathbb Z}_{-})} \text{ for } x_0\in \im \bW_c.
 \label{uSrOpForm}
  \end{align}
It is clear that $\uS_r(x_0)<\infty$ if and only if $x_0\in \im \bW_c$.
 Since the objective in the infimum defining $\uS_r$ in \eqref{uSrOpForm} is the same as the
 objective in the infimum defining $S_r$ in \eqref{SrOpForm} but the former  infimum is taken over an a priori larger set,
 it follows directly that $S_r(x_0)\geq \uS_r(x_0)$ for all $x_0\in\cX$, as can also be seen as a consequence of
 Proposition \ref{P:SaSr} once we show that $\uS_r$ is a storage function for $\Sigma$.
 From either of the formulas we see  that $0\leq\uS_r(x_0)$ and that $\uS_r(x_0) <\infty$ exactly when $x_0$ is
 in the range of $\bW_c$.  Hence once we show that $\uS_r$ is a storage function, it follows that $\uS_r$ is an
 $\ell^2$-regular storage function and is a candidate to be the largest such.  However at this stage we have
 only partial results in this direction, as laid out in the next result.

 \begin{proposition}  \label{P:uSr}  Assume that $\Sigma$ is a system satisfying the assumptions \eqref{A}
 and let the function $\uS_r \colon \im \bW_c \to {\mathbb R}_+$ be given by \eqref{uSrOpForm}.  Then:
 \begin{enumerate}
 \item[(1)]  $S_a$ and  $\uS_r$ are  $\ell^2$-regular storage functions.

 \item[(2)] $\uS_r$ is ``almost'' the largest $\ell^2$-regular storage function in the following sense:
 if $S$ is another $\ell^2$-regular storage function such that either
 \begin{enumerate}
 \item $S$ is $\cD(\bW_c^*)$-weakly continuous in the sense that:  given a sequence $\{ x_n\} \subset \im \bW_c$ and $x_c \in \im \bW_c$ such that
 $$
 \lim_{n \to \infty} \langle x, x_n \rangle_\cX  = \langle x, x_c \rangle_\cX \text{ for all } x \in \cD(\bW_c^*),
 $$
 then $\lim_{n \to \infty} S(x_n) = S(x_0)$, or

 \item $\bW_c$ is bounded
 and $S$ is continuous on $\cX$ {\rm(}with respect to the norm topology on $\cX${\rm)},
 \end{enumerate}
 then $S(x_0) \le \uS_r(x_0)$ for all $x_0\in\cX$.
  \end{enumerate}
  \end{proposition}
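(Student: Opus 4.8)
The plan is to prove (1) by first showing that $\uS_r$ is itself a storage function, then reading off $\ell^2$-regularity of both $\uS_r$ and $S_a$; part (2) will then be an approximation argument that transports the already-known bound $S\le S_r$ (Proposition \ref{P:SaSr}(3)) to the larger competition set defining $\uS_r$.

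For (1), normalization $\uS_r(0)=0$ is immediate from \eqref{uSrOpForm} by taking $\bu=0$ together with $\uS_r\ge 0$. The substance is the dissipation inequality \eqref{disineq}, which I would prove by following the pattern of Proposition \ref{P:SaSr}(2): using translation invariance to reduce to the step from time $0$ to time $1$, fix a system trajectory $(\wtil\bu(n),\wtil\bx(n),\wtil\by(n))$ and establish
\[
\uS_r(\wtil\bx(1)) \le \uS_r(\wtil\bx(0)) + \|\wtil\bu(0)\|^2 - \|\wtil\by(0)\|^2.
\]
If $\wtil\bx(0)\notin\im\bW_c$ this is trivial, so assume $\wtil\bx(0)=\bW_c\bu$ with $\bu\in\cD(\bW_c)$, and set $\wtil u:=\wtil\bu(0)$. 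Applying Proposition \ref{P:Wc}(2) (equivalently Remark \ref{R:Wc}) produces $\bu'\in\cD(\bW_c)$ with $\bW_c\bu'=A\bW_c\bu+B\wtil u=\wtil\bx(1)$, so $\bu'$ is admissible in the infimum defining $\uS_r(\wtil\bx(1))$. Since $\bu'$ is the left shift of $\bu$ with $\wtil u$ appended at position $-1$, one has $\|\bu'\|^2=\|\bu\|^2+\|\wtil u\|^2$; the causal, shift-invariant Toeplitz structure of $\wtil\fT_{F_\Sigma}$ then gives that $\wtil\fT_{F_\Sigma}\bu'$ is the corresponding shift of $\wtil\fT_{F_\Sigma}\bu$ with the new output $\wtil\by(0)=C\bW_c\bu+D\wtil u$ appended, whence $\|\wtil\fT_{F_\Sigma}\bu'\|^2=\|\wtil\fT_{F_\Sigma}\bu\|^2+\|\wtil\by(0)\|^2$. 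Subtracting these two identities and taking the infimum over all $\bu$ with $\bW_c\bu=\wtil\bx(0)$ yields the dissipation inequality. Once $\uS_r$ is a storage function, its $\ell^2$-regularity is immediate from the observation preceding the proposition that $\uS_r(x_0)<\infty$ exactly when $x_0\in\im\bW_c$; and since $\uS_r$ is then an admissible competitor in Proposition \ref{P:SaSr}(3), we obtain $S_a(x_0)\le\uS_r(x_0)<\infty$ for $x_0\in\im\bW_c$, so $S_a$ is $\ell^2$-regular as well.

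For (2), fix $x_0$; if $x_0\notin\im\bW_c$ then $\uS_r(x_0)=+\infty$ and there is nothing to prove, so take $x_0=\bW_c\bu$ with $\bu\in\cD(\bW_c)$. The idea is to approximate $\bu$ by its truncations $\bu_N:=P_{[-N,-1]}\bu\in\ell_{\tu{fin},\cU}(\BZ_-)$ and pass to the limit. Put $x_N:=\bW_c\bu_N=\sum_{k=-N}^{-1}A^{-k-1}B\bu(k)\in\Rea(A|B)\subset\im\bW_c$. Because $\bu_N$ is finitely supported, formula \eqref{SrOpForm} together with Proposition \ref{P:SaSr}(3) gives $S(x_N)\le S_r(x_N)\le\|D_{\wtil\fT_{F_\Sigma}}\bu_N\|^2$. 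The defining action \eqref{limit-c} of $\bW_c$ says precisely that $\langle x,x_N\rangle\to\langle x,x_0\rangle$ for every $x\in\cD(\bW_c^*)$, i.e.\ $x_N\to x_0$ in the $\cD(\bW_c^*)$-weak sense. Under hypothesis (a) this yields $S(x_N)\to S(x_0)$ directly; under hypothesis (b), boundedness of $\bW_c$ upgrades $\bu_N\to\bu$ in $\ell^2$ to $x_N\to x_0$ in norm, and norm-continuity of $S$ again gives $S(x_N)\to S(x_0)$. Meanwhile $\bu_N\to\bu$ in $\ell^2_\cU(\BZ_-)$ and boundedness of $D_{\wtil\fT_{F_\Sigma}}$ give $\|D_{\wtil\fT_{F_\Sigma}}\bu_N\|^2\to\|D_{\wtil\fT_{F_\Sigma}}\bu\|^2$. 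Combining, $S(x_0)\le\|D_{\wtil\fT_{F_\Sigma}}\bu\|^2$, and taking the infimum over all admissible $\bu$ gives $S(x_0)\le\uS_r(x_0)$.

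The main obstacle I anticipate is the cost-accounting identity $\|\wtil\fT_{F_\Sigma}\bu'\|^2=\|\wtil\fT_{F_\Sigma}\bu\|^2+\|\wtil\by(0)\|^2$ in part (1): one must check that appending $\wtil u$ at position $-1$ to the shifted sequence produces exactly the value $\wtil\by(0)=C\bW_c\bu+D\wtil u$ in the last output coordinate while leaving the remaining (shifted) outputs untouched. This forces an identification of $\sum_{k\le-1}CA^{-k-1}B\bu(k)$ with $C\bW_c\bu$ via \eqref{limit-c}, which is delicate precisely because $\bW_c$ is only a (possibly unbounded) closed operator, so the partial sums need not converge in norm; the coordinate-wise convergence guaranteed by boundedness of $\wtil\fT_{F_\Sigma}$ combined with the weak characterization of $\bW_c\bu$ is what resolves it. By contrast, the approximation in part (2) is routine once the $\cD(\bW_c^*)$-weak convergence $x_N\to x_0$ is recognized as a verbatim restatement of \eqref{limit-c}.
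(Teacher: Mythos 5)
Your proof is correct and follows essentially the same route as the paper: reduce the dissipation inequality for $\uS_r$ to the one-step shift construction of Proposition \ref{P:Wc}(2)/Remark \ref{R:Wc}, resolve the key identification $(\widetilde\fT_{F_\Sigma}\bu')(-1)=C\bW_c\bu+D\widetilde u$ via the $\cD(\bW_c^*)$-weak characterization of $\bW_c$ (using $C^*\cY\subset\cD(\bW_c^*)$), and prove (2) by truncating $\bu$ and passing to the limit under either continuity hypothesis. Your shortcuts---deducing $\ell^2$-regularity of $S_a$ from $S_a\le\uS_r$ rather than from a direct Laurent-operator estimate, and using $S\le S_r$ in (2) rather than re-iterating the dissipation inequality---are minor variants the paper itself acknowledges.
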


\begin{proof}
We first prove item (1), starting with the claim for $S_a$. Since by assumption $\Si$ is minimal and $F_\Si$ has an analytic continuation to a Schur class function, by item (1) of Proposition \ref{P:HankelDecs}, $\im \bW_c\subset \cD(\bW_o)$. So on $\im \bW_c$, the available storage $S_a$ is given by \eqref{SaOpForm}. It remains to show that for $x_0\in \im \bW_c$ the formula for $S_a(x_0)$ in \eqref{SaOpForm} gives a finite value. So assume $x_0\in \im \bW_c$, say $x_0=\bW_c \bu_-$ for a $\bu_-\in\ell^2_\cU(\BZ_-)$. Choose $\bu_+\in\ell^2_\cU(\BZ_+)$ arbitrarily and define $\bu\in\ell^2_\cU(\BZ)$ by setting $P_- \bu=\bu_-$ and $P_+ \bu=\bu_+$. Then $\bW_o x_0=\bW_o \bW_c \bu_-= \fH_{F_\Si}\bu_-$. Thus, using the decomposition of $\fL_{F_\Si}$ in \eqref{Laurent} and the fact that $\|\fL_{F_\Si}\|\leq 1$, we find that
\begin{align*}
&\| \bW_{o} x_{0} + \frakT_{{F_\Si}} \bu_+\|^{2} - \| \bu_+ \|^{2}
 =\| \fH_{F_\Si}\bu_- + \frakT_{{F_\Si}} \bu_+\|^{2} - \| \bu_+ \|^{2}\\
&\qquad\qquad=\| P_+\fL_{F_\Si} \bu\|^{2} - \|P_+ \bu \|^{2}
= \|P_- \bu \|^{2}+ \| P_+\fL_{F_\Si} \bu\|^{2} - \| \bu \|^{2}\\
&\qquad\qquad\leq \|P_- \bu \|^{2}=\|\bu_-\|^2.
\end{align*}
Since the upper bound $\|\bu_-\|^2$ is independent of the choice of $\bu_+\in \ell^2_\cU(\BZ_+)$, we can take the supremum over all $\bu_+\in \ell^2_\cU(\BZ_+)$ to arrive at the inequality $S_a(x_0)\leq \|\bu_-\|^2<\infty$.

Next we prove the statement of item (1) concerning $\uS_r$.
By the discussion immediately preceding the statement of the proposition, it follows that $\uS_r$ is an $\ell^2$-regular storage function once we show that $\uS_r$ is a storage function, that is, $\uS_r(0) = 0$ and that
$\uS_r$ satisfies the dissipation inequality \eqref{disineq}.

If $x_0 = 0$, we can choose $\bu = 0$ as the argument in the right hand side of \eqref{uSrOpForm} to conclude that
$\uS_r(0) \le 0$.  As we have already seen that $\uS_r(x_0) \ge 0$ for all $x_0$, we conclude that
$\uS_r(0) = 0$.

To complete the proof of item (1), it remains to show that $\uS_r$ satisfies the dissipation inequality \eqref{disineq}.  By shift invariance we may take $n=N_0 = 0$ in \eqref{disineq}. If $\bx(0) \notin \im \bW_c$, then $\uS_r(x_0) = \infty$ and \eqref{disineq} holds trivially.  We therefore assume that $(\widetilde \bu(n),  \widetilde \bx(n),
\widetilde \by(n))_{n \ge 0}$ is a system trajectory with initialization $\widetilde \bx(0) = x_0 = \bW_c \bu_-$ for some
$\bu_- \in \cD(\bW_o)$  and the problem is to show
\begin{align}
&\uS_r(\widetilde  \bx(1))  \le \|\widetilde  \bu(0) \|^2  - \| \widetilde \by(0) \|^2 +
\uS_r(\widetilde \bx(0))= \label{tocheck} \\
 &  \quad = \inf_{\bu \in \cD(\bW_c) \colon \bW_c \bu = \widetilde \bx(0)} \left[
 \|\widetilde  \bu(0) \|^2  - \| \widetilde \by(0) \|^2 + \sum_{n=-\infty}^{-1} \left( \| \bu(n) \|^2 - \| \by(n) \|^2
 \right) \right],   \notag
\end{align}
where  $\by = \widetilde \fT_{F_\Sigma} \bu$.
As $(\widetilde \bu(n), \widetilde \bx(n), \widetilde \by(n))_{n \ge 0}$ is a system trajectory initiated at 0, we know that
$\widetilde \bx(1) = A \widetilde \bx(0) + B \widetilde \bu(0)$ and
$\widetilde \by(0) = C \widetilde \bx(0) + D \widetilde \bu(0)$.
On the other hand, by translation-invariance of the system equations \eqref{dtsystem} we may rewrite the formula
\eqref{uSr2} for $\uS_r(\widetilde \bx(1))$  as
\begin{equation}  \label{uSr-shift}
\uS_r(\widetilde \bx(1)) = \inf_{ \bu'  \in \cD(\bW_c^{(1)}) \colon \bW_c^{(1)} \bu = \widetilde \bx(1)}
    \sum_{n=-\infty}^0 \left(  \| \bu'(n) \|^2 - \| \by'(n) \|^2 \right),
\end{equation}
where $\bW_c^{(1)}$ is the shifted observability operator discussed in Remark \ref{R:Wc} and
where $\by' =\widetilde \fT_{F_\Sigma}^{(1)} \bu'$;  here now $\bu'$ is supported on $(-\infty, 0]$ rather than on
 ${\mathbb Z}_- = (-\infty, 0)$ and $\widetilde \fT_{F_\Sigma}^{(1)}$ is the shift of $\widetilde \fT_{F_\Sigma}$
 from the interval ${\mathbb Z}_-$ to the interval $(-\infty, 0]$.
 Let us write sequences $\bu' \in \ell^2_\cU((-\infty, 0])$ in the form $\bu' = (\bv', v')$ as in Remark \ref{R:Wc}
 where $\bv' \in \ell^2_\cU({\mathbb Z}_-)$ and $v' \in \cU$.  As observed in Remark \ref{R:Wc},
 $$
 \bW_c^{(1)}(\bv', v) = A \bW_c \bv' + B v'.
 $$
 Furthermore, from the structure of the Laurent operator $\fL_{F_\Sigma}$ \eqref{Laurent} we read off that
 \begin{equation}  \label{shift-Toeplitz}
   \widetilde \fT_{F_\Sigma}^{(1)} (\bv', v') =
   \left( \widetilde \fT_{F_\Sigma} \bv', \sum_{k=-\infty}^{-1} C A^{-k-1} B \bv'(k) + D v'  \right)
\end{equation}
where the series converges at least in the weak topology of $\cY$.  For $\bv' \in \cD(\bW_c)$, we know from
Proposition \ref{P:WcWo'} that $\bW_c \bv'$ is given by
\begin{equation}   \label{Wc-formula}
\bW_c \bv' = \sum_{k=-\infty}^{-1} A^{-k-1} B \bv'(k)
\end{equation}
where the series converges $\cD(\bW_c^*)$-weakly.  We also know under our standing assumption \eqref{A} that
$\Obs(C|A) \subset \cD(\bW_c^*)$ (see Proposition \ref{P:HankelDecs} (2)), and hence in particular
 $C^* y \in \cD(\bW_c^*)$ for all $y \in \cY$.   This observation combined with the formula \eqref{Wc-formula}
 implies that
 $$
 C \bW_c \bv' = \sum_{k=-\infty}^{-1} C A^{-k-1} B \bv'(k)
 $$
  where the series converges weakly in $\cY$.
 This combined with \eqref{shift-Toeplitz} gives us
 $$
 \widetilde \fT_{F_\Sigma}^{(1)} (\bv', u) =
 \left( \widetilde \fT_{F_\Sigma} \bv', C \bW_c \bv' + Dv'  \right).
 $$
 Thus the formula \eqref{uSr-shift} for $\uS_r(\widetilde \bx(1))$ can be written out in more detail as
 \begin{equation}   \label{uSrtildex(1)}
\hspace*{-.2cm}   \uS_r(\widetilde \bx(1)) =\!\!\! \inf_{ (\bv', v') \in \cT' } \! \left\{\!
 ( \| \bv' \|^2 - \| \widetilde \fT_{F_\Sigma} \bv'  \|^2) + \| u \|^2 - \| C \bW_c \bv' + Du\|^2\!
 \right\}
 \end{equation}
 where
 \begin{equation}   \label{cT'}
\cT' : =  \{ \bv' \in \cD(\bW_c),  \, v' \in \cU  \colon A \bW_c \bv' + Bv' = \widetilde \bx(1)\}.
 \end{equation}
 Note that the infimum \eqref{tocheck} can be identified with the infimum \eqref{uSrtildex(1)}
 if we restrict the free parameter $(\bv', v')$ to lie in the subset
  $$
  \cT = \{ (\bv', v') \in \cT' \colon
   \bW_c \bv' = \widetilde \bx(0), \quad v' = \widetilde \bu(0)\}.
 $$
 As the infimum of an objective function over a given set $\cT$
 is always bounded above by the infimum of the same objective function over a smaller set $\cT' \subset \cT$, the
 inequality \eqref{tocheck} now  follows as wanted.

 It remains to address item (2), i.e., to show that $S(x_0) \le \uS_r(x_0)$ for any other storage function
 $S$ satisfying appropriate hypotheses.
If $x_0 \notin \im \bW_c$, $\uS_r(x_0) = \infty$ and the desired inequality holds trivially, so we assume that
$x_0 = \bW_c \bu$ for some $\bu \in \cD(\bW_c)$.  Let us approximate $\bu$ by elements of
$\ell_{\tu{fin}, \, \cU}({\mathbb Z}_-)$ in the natural way:
$$
   \bu_K(n) = \begin{cases}  \bu(n) &\text{for } -K \le n \le -1, \\
          0 &\text{for } n< -K \end{cases}
$$
for $K=1,2,\dots$, and set $x_K = \bW_c \bu_K$.  We let $(\bu(n), \bx(n), \by(n))_{n \ge -K}$ be a system trajectory
with $\bu(n) = \bu_K(n)$ and with the state initialization $\bx(-K) = 0$.  Then, as $\bx(0)$ will then be equal to
$x_K$,  iteration of the dissipation inequality \eqref{disineq} gives us
\begin{equation}   \label{tS-ineq}
 S(x_K) \le \sum_{n=-K}^{-1} \left( \| \bu_K(n) \|^2 - \| \widetilde \fT_{F_\Sigma} \bu_K(n) \|^2
 \right).
\end{equation}
We seek to let $K \to \infty$ in this inequality. As $\bu_K \to \bu$ in the norm topology of $\ell^2_\cU({\mathbb Z}_-)$
and $\| \widetilde \fT_{F_\Sigma} \| \le 1$ since $F$ is in the Schur class by assumption, it is clear that the right hand side
of \eqref{tS-ineq} converges to
$$
\| \bu \|^2_{\ell^2_\cU({\mathbb Z}_-) } - \| \widetilde \fT_{F_\Sigma} \bu \|^2_{\ell^2_\cU({\mathbb Z}_-)}=
 \| D_{\widetilde \fT_{F_\Sigma}} \bu \|^2_{\ell^2_\cU({\mathbb Z}_-)}
$$
as $K \to \infty$.
On the other hand, as a consequence of the characterization \eqref{limit-c} of the action of
$\bW_c$,  it follows that $x_K = \bW_c \bu_K$ converges to $x_0 = \bW_c \bu$ in the $\cD(\bW_c^*)$-weak sense.
Hence, if $S$ is continuous with respect to the $\cD(\bW_c^*)$-weak topology as described in the statement of item (a),  we see that $S(x_K) \to S(x_0)$ as $K \to \infty$ and we arrive at the limiting version of inequality \eqref{tS-ineq}:
\begin{equation}  \label{limit-wS-ineq}
 S(x_0) \le  \| \bu\|^2 - \| \widetilde \fT_{F_\Sigma} \bu \|^2  =
  \| D_{\widetilde \fT_{F_\Sigma}}  \bu \|^2_{\ell^2_\cU({\mathbb Z}_-)}.
\end{equation}
We may now take the infimum over all $\bu \in \cD(\bW_c)$ with $\bW_c \bu = x_0$ to arrive at the desired inequality $S(x_0) \le \uS_r(x_0)$. This proves item (a) of (2).
If $\bW_c$ is bounded, then $x_K = \bW_c \bu_K$ converges in norm to
$\bW_c \bu = x_0$.  If $S$ is continuous with respect to the norm topology on $\cX$, then
$S(x_K) \to S(x_0)$
and we again arrive at the limit inequality \eqref{limit-wS-ineq},
from which the desired inequality $S(x_0) \le \uS_r(x_0)$ again follows.
This completes the verification of item (2)
in Proposition \ref{P:uSr}.
\end{proof}

\begin{remark}
Note that the fact that $S_a$ is $\ell^2$-regular can alternatively be seen from the fact that $\uS_r$ is
a $\ell^2$-regular storage function combined with the first inequality in item (3) of Proposition \ref{P:SaSr}.
\end{remark}

Collecting some of the observations on the boundedness of $S_a$ and $\uS_r$ from the above results we obtain the following corollary. The inequalities in \eqref{ineqs} follow directly from \eqref{SaOpForm} and \eqref{uSrOpForm}.

\begin{corollary}\label{C:boundedSauSr}
Assume $\Si$ as in \eqref{dtsystem} is a system satisfying the assumptions \eqref{A}. Define $S_a$ by \eqref{Sa2} and $\uS_r$ by \eqref{uSr2}. For $x_0\in\im \bW_o$ we have
\begin{equation}\label{ineqs}
\|\bW_ox_0\|^2\leq S_a(x_0)\leq \uS_r(x_0)\leq \|\bu_-\|^2
\end{equation}
for all $\bu_-\in\cD(\bW_c)$ with $x_0=\bW_c \bu_-$, with the last inequality being vacuous if $x_0\not\in\im \bW_c$, in which case $\uS_r(x_0)=\infty$. Hence
\begin{align*}
\uS_r(x_0)<\infty \quad &\Longleftrightarrow \quad x_0\in \im \bW_c,\\
x_0\in \im \bW_c \quad \Longrightarrow\quad &S_a(x_0)<\infty \quad \Longrightarrow\quad x_0\in\cD(\bW_o).
\end{align*}
In particular, $\uS_r$ is finite-valued if and only if $\im \bW_c=\cX$, that is, $\Si$ is $\ell^2$-exactly controllable, and $S_a$ is finite-valued in case $\Si$ is $\ell^2$-exactly controllable.
\end{corollary}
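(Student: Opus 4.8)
The plan is to read the chain \eqref{ineqs} directly off the operator formulas \eqref{SaOpForm} and \eqref{uSrOpForm}, and then to extract the finiteness dichotomies as purely logical consequences, using the domain information already recorded in Lemma \ref{L:SaSrOpForm} and Proposition \ref{P:HankelDecs}. The only point requiring genuine care is that the two operator forms have different natural domains ($\cD(\bW_o)$ for $S_a$, and $\im \bW_c$ for $\uS_r$); the chain makes sense precisely because $\im \bW_c \subset \cD(\bW_o)$ under assumption \eqref{A}, which is Proposition \ref{P:HankelDecs}(1). Apart from this alignment the argument is bookkeeping, so there is no substantial obstacle.

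For the inequalities themselves, fix $x_0 \in \cD(\bW_o)$. The first inequality $\|\bW_o x_0\|^2 \le S_a(x_0)$ is immediate from \eqref{SaOpForm}, since the admissible input $\bu = 0$ contributes exactly the value $\|\bW_o x_0\|^2$ to the supremum. For the last inequality, suppose $x_0 = \bW_c \bu_-$ with $\bu_- \in \cD(\bW_c)$; then $\bu_-$ is an admissible competitor in the infimum \eqref{uSrOpForm}, so $\uS_r(x_0) \le \|D_{\widetilde\fT_{F_\Si}} \bu_-\|^2 \le \|\bu_-\|^2$, where the last step uses that $\widetilde\fT_{F_\Si}$ is a contraction and hence $\|D_{\widetilde\fT_{F_\Si}}\| \le 1$. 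If $x_0 \notin \im \bW_c$ there is no such $\bu_-$, so this inequality is vacuous while $\uS_r(x_0) = \infty$ by the convention that the infimum over the empty set is $+\infty$. The middle inequality $S_a(x_0) \le \uS_r(x_0)$ follows from Proposition \ref{P:SaSr}(3) applied to the storage function $\uS_r$, whose status as a storage function was established in Proposition \ref{P:uSr}(1). Alternatively, it can be obtained directly by retaining the term $-\|\widetilde\fT_{F_\Si}\bu_-\|^2$ that was discarded in the estimate in the proof of Proposition \ref{P:uSr}(1); this upgrades the bound there from $\|\bu_-\|^2$ to $\|D_{\widetilde\fT_{F_\Si}}\bu_-\|^2$, and taking the infimum over all such $\bu_-$ yields $S_a \le \uS_r$, which is the sense in which \eqref{ineqs} ``follows directly'' from \eqref{SaOpForm} and \eqref{uSrOpForm}.

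The displayed dichotomies are then formal consequences. The equivalence $\uS_r(x_0) < \infty \Leftrightarrow x_0 \in \im \bW_c$ is exactly the observation recorded immediately after \eqref{uSrOpForm} (finite value $\Leftrightarrow$ nonempty competition in the infimum). Combining the middle inequality of \eqref{ineqs} with this equivalence gives the implication $x_0 \in \im \bW_c \Rightarrow \uS_r(x_0) < \infty \Rightarrow S_a(x_0) < \infty$, while $S_a(x_0) < \infty \Rightarrow x_0 \in \cD(\bW_o)$ is just the contrapositive of the assertion in Lemma \ref{L:SaSrOpForm} that $S_a(x_0) = +\infty$ for every $x_0 \notin \cD(\bW_o)$. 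Finally, quantifying over all $x_0 \in \cX$: $\uS_r$ is finite-valued if and only if every $x_0 \in \cX$ lies in $\im \bW_c$, i.e. if and only if $\im \bW_c = \cX$, which is precisely $\ell^2$-exact controllability; and in that case the implication just derived forces $S_a(x_0) < \infty$ for every $x_0 \in \cX$ as well, giving the last assertion.
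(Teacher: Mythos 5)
Your proposal is correct and fills in exactly the bookkeeping that the paper's one-line justification (``the inequalities in \eqref{ineqs} follow directly from \eqref{SaOpForm} and \eqref{uSrOpForm}'') leaves implicit: testing the supremum at $\bu=0$, testing the infimum at $\bu_-$, and getting the middle inequality either from Proposition \ref{P:SaSr}(3) applied to the storage function $\uS_r$ or by keeping the $-\|\widetilde\fT_{F_\Si}\bu_-\|^2$ term in the estimate from Proposition \ref{P:uSr}(1); your reading of the hypothesis ``$x_0\in\im\bW_o$'' as $x_0\in\cD(\bW_o)$ is the sensible one. This is essentially the paper's intended argument, so no further comparison is needed.
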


 Since ${F_\Si}$ is assumed to be a Schur class function,
$\frakL_{{F_\Si}}$ is a contraction, so that $I - \frakL_{{F_\Si}}
\frakL_{{F_\Si}}^{*}$ and $I - \frakL_{{F_\Si}}^{*} \frakL_{{F_\Si}}$
are positive-semidefinite operators. We can thus read off from the
$(2,2)$-entry in the right-hand side of the first identity and the $(1,1)$-entry in
the right hand side of the second  identity of \eqref{LFids} that
\begin{equation}   \label{Hankel-est}
D_{\frakT_{{F_\Si}}^{*}}^{2} \succeq \frakH_{{F_\Si}}
\frakH_{{F_\Si}}^{*}
\ands
D_{\widetilde \frakT_{{F_\Si}}}^{2}\succeq \frakH_{{F_\Si}}^{*}
\frakH_{{F_\Si}}.
 \end{equation}
The observability and controllability assumptions of \eqref{A}
imply that the observability operator $\bW_o:\cD(\bW_o)\to
\ell^2_\cY(\BZ_+)$ and the controllability operator
$\bW_c:\cD(\bW_c)\to\cX$ are closed densely defined operators that
satisfy the properties listed in Propositions \ref{P:WcWo'} and \ref{P:HankelDecs}.
As spelled out in Proposition \ref{P:HankelDecs}, the Hankel operator $\frakH_{{F_\Si}}$ admits the
factorizations
\begin{equation}\label{HankelFact}
\frakH_{F_\Si}|_{\cD(\bW_{c})} = \bW_{o} \bW_{c}\ands
\frakH_{F_\Si}^*|_{\cD(\bW_{o}^*)} = \bW_{c}^* \bW_{o}^*.
 \end{equation}

 Using the Douglas factorization lemma \cite{Douglas} together with the factorizations
 \eqref{HankelFact}, we arrive at the following result.  The proof also requires use of
 the Moore-Penrose generalized inverse $X^\dagger$ of
a densely defined closed linear Hilbert-space operator $X:\cD(X)\to\cH_2$, with $\cD(X)\subset\cH_1$:
we define $X^\dagger  \colon \cD(X^\dagger) = (\im X \oplus (\im X)^\perp)  \to \cH_1$  by
\begin{equation}   \label{MP}
\left\{ \begin{array}{rcl}
X^\dagger (X h_1) & = & P_{ (\kr X)^\perp} h_1, \\
X^\dagger|_{(\im X)^\perp} & = & 0.
\end{array}  \right.
\end{equation}
Then $X^\dagger$ is also closed and has the properties
$$
  X^\dagger X = P_{(\kr X)^\perp}|_{\cD(X)}, \quad
  X X^\dagger = P_{\overline{\im} X}|_{ \im X  \oplus (\im X)^\perp }.
$$
In particular, if $X$ is bounded and surjective, then $X^\dagger$ is a bounded
right inverse of $X$, and, if $X$ is bounded, bounded below and injective, then $X^\dagger$ is
a bounded left inverse of $X$.

 \begin{lemma}\label{L:fact}
Let the discrete-time linear system $\Si$ in \eqref{dtsystem} satisfy
the assumptions in \eqref{A}. Then:
\begin{enumerate}
    \item[(1)]
    There exists a unique closable operator $X_a$
with domain $\im \bW_c$ mapping into
$ (\kr D_{\frakT_{F_{\Sigma}}^{*}})^\perp  \subset
\ell^2_\cY(\BZ_+)$
so that we have the factorization
\begin{equation}   \label{fact1}
\bW_{o}|_{\im \bW_{c}} = D_{\frakT_{{F_\Si}}^{*}} X_{a}.
\end{equation}
Moreover, if we let $\overline{X}_{a}$ denote the closure of
$X_{a}$, then $\overline{X}_{a}$ is injective.

\item[(2)] There exists a unique closable operator $X_{r}$ with domain
$\im \bW_{o}^{*}$ mapping into
$(\kr D_{\widetilde \frakT_{F_{\Sigma}}})^\perp
\subset \ell^{2}_{\cU}({\mathbb Z}_{-})$
so that we have the factorization
\begin{equation}  \label{fact2}
    \bW_{c}^{*}|_{\im \bW_{o}^{*}} = D_{\widetilde
\frakT_{F_{\Sigma}}}
    X_{r}.
\end{equation}
Moreover, if we let $\overline{X}_{r}$ denote the closure of $X_{r}$,
then $\overline{X}_{r}$ is injective.
\end{enumerate}
\end{lemma}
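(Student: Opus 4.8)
The plan is to reduce everything to the classical Douglas range-inclusion/factorization lemma applied at the \emph{bounded} level of the Hankel operator, and then to transport the resulting factor across the (possibly unbounded) operators $\bW_o$ and $\bW_c$ by exploiting the factorizations \eqref{HankelFact} together with the closedness properties recorded in Proposition \ref{P:WcWo'}. For part (1), the starting point is the Hankel estimate $D_{\fT_{F_\Sigma}^*}^2 \succeq \fH_{F_\Sigma}\fH_{F_\Sigma}^*$ from \eqref{Hankel-est}: since $F_\Sigma$ is Schur class, $\fH_{F_\Sigma}$ and the self-adjoint defect $D_{\fT_{F_\Sigma}^*}$ are bounded, so Douglas' lemma \cite{Douglas} gives the range inclusion $\im\fH_{F_\Sigma} \subseteq \im D_{\fT_{F_\Sigma}^*}$. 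I would then \emph{define} $X_a := D_{\fT_{F_\Sigma}^*}^{\dagger}\,\bW_o|_{\im\bW_c}$ via the Moore--Penrose inverse \eqref{MP}. This is meaningful because for $x_0 = \bW_c\bu \in \im\bW_c$ with $\bu\in\cD(\bW_c)$, the factorization \eqref{HankelFact} yields $\bW_o x_0 = \bW_o\bW_c\bu = \fH_{F_\Sigma}\bu \in \im\fH_{F_\Sigma} \subseteq \im D_{\fT_{F_\Sigma}^*} \subseteq \cD(D_{\fT_{F_\Sigma}^*}^{\dagger})$. The property $\ran D^{\dagger}\subseteq(\kr D)^\perp$ from \eqref{MP} shows $\ran X_a\subseteq(\kr D_{\fT_{F_\Sigma}^*})^\perp$, and $D_{\fT_{F_\Sigma}^*}D_{\fT_{F_\Sigma}^*}^{\dagger} = P_{\overline{\im}\,D_{\fT_{F_\Sigma}^*}}$ on $\im D_{\fT_{F_\Sigma}^*}\oplus(\im D_{\fT_{F_\Sigma}^*})^\perp$ gives $D_{\fT_{F_\Sigma}^*}X_a x_0 = \bW_o x_0$, which is exactly \eqref{fact1}. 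Uniqueness is immediate, since two admissible factors differ on each $x_0$ by a vector lying in both $\kr D_{\fT_{F_\Sigma}^*}$ (from the factorization) and $(\kr D_{\fT_{F_\Sigma}^*})^\perp$ (from the range constraint), hence in $\{0\}$.

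The two remaining assertions---closability of $X_a$ and injectivity of its closure $\overline{X}_a$---are where the unboundedness genuinely bites, and I expect this to be the delicate part; both are handled by playing the orthogonality constraint $\ran X_a\subseteq(\kr D_{\fT_{F_\Sigma}^*})^\perp$ against the closedness and injectivity of $\bW_o$. For closability, suppose $x_n\in\im\bW_c$ with $x_n\to 0$ and $X_a x_n\to z$. Applying the bounded operator $D_{\fT_{F_\Sigma}^*}$ and using \eqref{fact1} gives $\bW_o x_n = D_{\fT_{F_\Sigma}^*}X_a x_n \to D_{\fT_{F_\Sigma}^*}z$; since $\bW_o$ is closed (Proposition \ref{P:WcWo'}) and $x_n\to 0$, this forces $D_{\fT_{F_\Sigma}^*}z = \bW_o 0 = 0$, so $z\in\kr D_{\fT_{F_\Sigma}^*}$. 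But $z$ is a limit of vectors in the closed subspace $(\kr D_{\fT_{F_\Sigma}^*})^\perp$, whence $z=0$. For injectivity of $\overline{X}_a$, observability of $(C,A)$ together with \eqref{KerWoWc} gives $\kr\bW_o = \Obs(C|A)^\perp = \{0\}$, so $\bW_o$ is injective; now if $\overline{X}_a w = 0$, pick $w_n\in\im\bW_c$ with $w_n\to w$ and $X_a w_n\to 0$, so $\bW_o w_n = D_{\fT_{F_\Sigma}^*}X_a w_n\to 0$, and closedness of $\bW_o$ yields $w\in\cD(\bW_o)$ with $\bW_o w=0$, hence $w=0$.

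Part (2) is proved by the dual argument, most cleanly phrased through the adjoint system: one uses the second Hankel estimate $D_{\widetilde\fT_{F_\Sigma}}^2\succeq\fH_{F_\Sigma}^*\fH_{F_\Sigma}$ from \eqref{Hankel-est}, the factorization $\fH_{F_\Sigma}^*|_{\cD(\bW_o^*)} = \bW_c^*\bW_o^*$ from \eqref{HankelFact}, and Douglas' lemma to get $\im\fH_{F_\Sigma}^*\subseteq\im D_{\widetilde\fT_{F_\Sigma}}$, then sets $X_r := D_{\widetilde\fT_{F_\Sigma}}^{\dagger}\,\bW_c^*|_{\im\bW_o^*}$. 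Closability follows from closedness of $\bW_c^*$ (Proposition \ref{P:WcWo'}), and injectivity of $\overline{X}_r$ from controllability of $(A,B)$, which via \eqref{KerWoWc} gives $\kr\bW_c^* = \Rea(A|B)^\perp = \{0\}$, so $\bW_c^*$ is injective. The main obstacle throughout is not the algebraic factorization---Douglas' lemma delivers it essentially for free from the defect inequalities---but the passage to closures: the point is that the range restriction into $(\kr D_{\fT_{F_\Sigma}^*})^\perp$ (respectively $(\kr D_{\widetilde\fT_{F_\Sigma}})^\perp$) is precisely what excludes spurious limits, and, combined with the closedness and injectivity of $\bW_o$ (respectively $\bW_c^*$), simultaneously secures closability and injectivity of the closure.
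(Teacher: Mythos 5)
Your proposal is correct and follows essentially the same route as the paper: Douglas' lemma applied to the defect inequalities \eqref{Hankel-est}, the Hankel factorizations \eqref{HankelFact}, and then closability and injectivity of the closure obtained exactly as in the paper by playing the range constraint into $(\kr D_{\fT_{F_\Sigma}^*})^\perp$ against the closedness and injectivity of $\bW_o$ (respectively $\bW_c^*$). The only difference is cosmetic: you set $X_a = D_{\fT_{F_\Sigma}^*}^{\dagger}\,\bW_o|_{\im \bW_c}$, pseudo-inverting the bounded defect operator, whereas the paper writes $X_a = Y_a \bW_c^{\dagger}$ with $Y_a$ the Douglas factor and $\bW_c^{\dagger}$ the Moore--Penrose inverse of the (possibly unbounded) controllability operator; by the uniqueness you both establish, these are the same operator.
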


\begin{proof}
    As statement (2) is just a dual version of statement (1), we only
    discuss the proof of (1) in detail.

Apply the Douglas factorization lemma to the first of the
inequalities in
\eqref{Hankel-est}  to get the existence of a unique contraction
operator
\[
Y_a:\ell^{2}_{\cU}({\mathbb Z}_{-})\to (\kr
D_{\frakT_{F_{\Sigma}}^{*}})^\perp  \subset \ell^{2}_{\cY}({\mathbb Z}_{+})
\]
such that
\[
D_{\frakT_{{F_\Si}}^{*}}Y_{a}
=\frakH_{{F_\Si}},
\quad\mbox{so that, by \eqref{HankelFact},}\quad
D_{\frakT_{{F_\Si}}^{*}}Y_{a}|_{\cD(\bW_{c})}
=\bW_{o} \bW_{c}.
\]
If we let $\bW_{c}^{\dagger}$  be the Moore-Penrose generalized inverse \eqref{MP} of $\bW_c$,
then
\[
\bW_{c}^{\dagger} (x) = {\text{arg min }} \{
\|\bu\|^{2}_{\ell^{2}_{\cU}({\mathbb Z}_{-})} \colon
\bu\in\cD(\bW_c),\  x =\bW_{c} \bu \}\quad (x\in \im \bW_c).
\]
Since $\bW_c$ is closed, $\kr \bW_c$ is a closed subspace of
$\ell^2_\cU(\BZ_-)$ and for all $\bu\in\cD(\bW_c)$ with $x = \bW_{c}
\bu$ we have
$\bW_{c}^{\dagger} (x)= \bu-P_{\kr \bW_c}\bu$.
We next define $X_{a} \colon \im \bW_{c} \to \ell^{2}_{\cY}({\mathbb
Z}_{+})$
by
$$
   X_{a} = Y_{a} \bW_{c}^{\dagger}.
$$
Then $X_{a}$ is a well-defined, possibly unbounded, operator on the
dense domain $\cD(X_{a}) = \im \bW_{c}$.  Moreover  we have
\[
D_{\frakT_{{F_\Si}}^{*}} X_a = D_{\frakT_{{F_\Si}}^{*}} Y_{a}
\bW_{c}^{\dagger} =
\frakH_{{F_\Si}} \bW_{c}^{\dagger} = \bW_{o} \bW_{c}
\bW_{c}^{\dagger} =
\bW_{o}|_{\im \bW_{c}}.
\]
Hence $X_a$ provides the factorization  \eqref{fact1}.
Furthermore, $X_a = Y_{a} \bW_{c}^{\dagger}$ implies that $\im X_a
\subset \im Y_a$, so that $\im X_a \subset (\kr
D_{\frakT_{{F_\Si}}^{*}})^\perp$.  Moreover, from the factorization
\eqref{fact1} we see that this property makes the choice of $X_{a}$
unique.

We now check that $X_a$ so constructed is closable. Suppose that
$\{x_{0}^{(k)}\}_{k \ge 0}$ is a sequence of vectors in $\im \bW_{c}$
such that $\lim_{k \to \infty} x_{0}^{(k)} = 0$ in $\cX$-norm, while
$\lim_{k \to \infty } X_a x_{0}^{(k)} = \by$ in
$\ell^{2}_{\cY}({\mathbb Z}_{+})$-norm. As $D_{\frakT_{{F_\Si}}^{*}}$
is bounded, it follows that
\[
\lim_{k \to \infty} \bW_{o} x_{0}^{(k)} =  \lim_{k \to \infty}
D_{\frakT_{{F_\Si}}^{*}} X_a x_{0}^{(k)}
= D_{\frakT_{{F_\Si}}^{*}} \by  \text{ in } \ell^{2}_{\cY}({\mathbb
Z}_{+})\text{-norm.}
\]
Since $\bW_{o}$ is a closed operator and we have $x_{0}^{(k)} \to 0$
in $\cX$-norm, it follows that $D_{\frakT_{{F_\Si}}^{*}} \by = 0$.
As $\im X_a \subset (\kr D_{\frakT_{{F_\Si}}^{*}})^{\perp}$ and $X_a
x_{0}^{(k)} \to \by$, we also have that $\by \in (\kr
D_{\frakT_{{F_\Si}}^{*}})^{\perp}$. It follows that $\by = 0$, and
hence $X_a$ is closable.

Let $\overline{X}_{a}$ be the closure of $X_{a}$.  We check that
$\overline{X}_{a}$ is injective as follows. The vector $x_{0}$ being
in  $\cD(\overline{X}_{a})$ means that there is a sequence of vectors
$\{x^{(k)}_{0}\}_{k \ge 1}$ contained in $\cD(X_{a})$ with $\lim_{k \to \infty}
x_{0}^{(k)} = x_{0}$ in $\cX$ and
$\lim_{k \to \infty} X_{a} x_{0}^{(k)} = \by$ for some
$\by \in \ell^{2}_{\cY}({\mathbb Z}_{+})$.  The condition that
$\overline{X}_{a} x = 0$ means that in addition $\by = 0$.  Since
$D_{\frakT_{F_{\Sigma}}^{*}}$ is bounded, it then follows that
$\lim_{k \to \infty} D_{\frakT_{F_{\Sigma}}^{*}}  X_{a} x_{0}^{(k)} =
0$, or, by \eqref{fact1}
$$
 \lim_{k \to \infty}\bW_{o} x_{0}^{(k)} = 0.
$$
As we also have $\lim_{k \to \infty} x_{0}^{(k)}  = x_{0}$ in $\cX$
and
$\bW_{o}$ is a closed operator, it follows that $x_{0} \in
\cD(\bW_{o})$ and $\bW_{o} x_{0} = 0$.  As $\bW_{o}$ is injective, it
follows that $x_{0} = 0$.  We conclude that $\overline{X}_{a}$ is
injective as claimed.
\end{proof}

Using the closed operators $\overline{X}_a$ and $\overline{X}_r$
defined in Lemma
\ref{L:fact} we now define (possibly unbounded)  positive-definite operators $H_a$ and $H_r$
so that the storage functions $S_a$ and $\uS_r$ have the quadratic forms $S_a = S_{H_a}$ and $\uS_r = S_{H_r}$ as in \eqref{QuadStorage1}.

We start with $H_a$.   Since
$\oX_{a}$ is closed, there is a good polar factorization
$$\oX_a =
U_{a} |\oX_a|$$
(see \cite[Theorem VIII.32]{RS}); in detail,
$\oX_{a}^{*} \oX_{a}$ is selfadjoint with positive selfadjoint
square-root $|\oX_a| = (\oX_{a}^{*} \oX_{a})^{\half}$ satisfying
$\cD(|\oX_a|) = \cD(\oX_{a})$, $U_{a}$ is a partial isometry with
initial space equal to $({\rm Ker}\, \oX_{a})^{\perp}$ and final
space equal to $\overline{\rm Im}\, \oX_{a}$ so that we have the
factorization $\oX_{a} = U_{a} |\oX_{a}|$.

Now set
\begin{equation}  \label{Ha-def}
H_a=\oX_{a}^{*}
\oX_{a},  \quad H_a^{\half}=|\oX_a|.
\end{equation}
As noted in Lemma \ref{L:fact}, $\oX_{a}$ is
injective, and thus $H_a$ and $H_a^{\half}$ are injective as well,
and as a result $U_{a}$ is an isometry.

We proceed with the definition of $H_r$. As the properties of
$\oX_{r}$ parallel those of $\oX_{a}$, $\oX_{r}$ has a good polar
decomposition $\oX_{r} = U_{r} | \oX_{r}|$ with $| \oX_{r}|$ and
$U_{r}$
having similar properties as $| \oX_{a}|$ and $U_{a}$, in particular,
$\oX_{r}^*\oX_{r}$ and  $|\oX_{r}|$ are injective and  $U_{r}$ is an
isometry.
We then define
\begin{equation}   \label{Hr-def}
 H_{r} = \left( \oX_{r}^{*} \oX_{r} \right)^{-1}, \quad
 H_{r}^{\half} = | \oX_{r} |^{-1}.
\end{equation}
We shall also need a modification of the factorization
\eqref{fact2}.  For $\bu \in \cD(\bW_{c})$ and $x \in \im
\bW_{o}^{*}$, let us note that
\begin{align*}
\langle \bW_{c} \bu, x \rangle_{\cX}
&=\langle \bu, \bW_{c}^{*} x \rangle_{\ell^{2}_{\cU}({\mathbb
    Z}_{-})} = \langle \bu, D_{\widetilde \frakT_{F_{\Sigma}}} X_{r} x
    \rangle_{\ell^{2}_{\cU}({\mathbb Z}_{-})} \text{ (by
    \eqref{fact2})}\\
& = \langle  D_{\widetilde \frakT_{F_{\Sigma}}} \bu, X_{r} x
 \rangle_{\ell^{2}_{\cU}({\mathbb Z}_{-})}.
\end{align*}
The end result is that then $D_{\widetilde \frakT_{F_{\Sigma}}} \bu$
is in $\cD(X_{r}^{*})$ and $X_{r}^{*}  D_{\widetilde
\frakT_{F_{\Sigma}}} \bu = \bW_{c} \bu$.  In summary we have
the following adjoint version of the factorization  \eqref{fact2}:
\begin{equation}  \label{fact3}
   \bW_{c} = X_{r}^{*}  D_{\widetilde \frakT_{F_{\Sigma}}}
   |_{\cD(\bW_{c})}.
\end{equation}

In the following statement we use the notion of a {\em core} of a closed, densely defined operator $\Gamma$ between
two Hilbert spaces $\cH$ and $\cK$ (see \cite{RS} or \cite{Kato}), namely:  a dense linear submanifold $\cD$ is said
to be a {\em core} for the closed, densely defined operator $X$ with domain $\cD(X)$ in $\cH$ mapping
into $\cK$ if, given any $x \in \cD(X)$, there is a sequence $\{ x_n \}_{n \ge 1}$ of points in $\cD$ such that
$\lim_{n \to \infty} x_n = x$ and also $\lim_{n \to \infty} X x_n = X x$.

\begin{theorem}\label{T:Sar}
Let the discrete-time linear system $\Si$ in \eqref{dtsystem} satisfy
the assumptions in \eqref{A}. Define $X_a$, $\oX_{a}$,
$X_{r}$, $\oX_r$ as in Lemma
\ref{L:fact} and the closed operators $H_{a}$ and
$H_r$ as in the preceding discussion. Then the available storage
function $S_{a}$ and required supply function $S_r$ are given
by
\begin{align}
S_{a}(x_{0}) = \| \oX_{a} x_{0} \|^{2} = \| H_{a}^{\half}
x_{0}\|^{2}\quad (x_0\in \im \bW_{c}), \label{form1}\\
\uS_{r}(x_{0}) = \| | \oX_{r} |^{-1} x_{0}
\|^{2}=\|H_{r}^{\half} x_{0}\|^2\quad (x_0\in \im \bW_c)
\label{form2}.
\end{align}
In particular, the available storage $S_{a}$ and $\ell^2$-regularized required supply
$\uS_r$ agree with quadratic storage functions on $\im \bW_c$.

Moreover, $\im \bW_c$ is a core for $H_a^\half$ and $\im \bW_o^*$ is a core for $H_r^{-\half}$.
\end{theorem}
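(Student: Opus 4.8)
The statement comprises the quadratic formulas \eqref{form1} and \eqref{form2} together with the two core assertions, and I would treat them in that order.

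\emph{The formula \eqref{form1}.} Fix $x_0\in\im\bW_c$; by Proposition \ref{P:HankelDecs}(1) this lies in $\cD(\bW_o)$, so \eqref{SaOpForm} applies. Substituting the factorization \eqref{fact1}, $\bW_o x_0 = D_{\fT_{F_\Si}^*}\oX_a x_0$, and abbreviating $v:=\oX_a x_0\in(\ker D_{\fT_{F_\Si}^*})^\perp=\overline{\im D_{\fT_{F_\Si}^*}}$, the supremum becomes $\sup_{\bu}\|D_{\fT_{F_\Si}^*}v+\fT_{F_\Si}\bu\|^2-\|\bu\|^2$. The key observation is that the row operator $[\,D_{\fT_{F_\Si}^*}\ \ \fT_{F_\Si}\,]$ is a coisometry, since $D_{\fT_{F_\Si}^*}^2+\fT_{F_\Si}\fT_{F_\Si}^*=I$; this immediately gives the upper bound $S_a(x_0)\le\|v\|^2$. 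For the reverse I would pick $g_k$ with $D_{\fT_{F_\Si}^*}g_k\to v$ and test the supremum against $\bu_k=\fT_{F_\Si}^*g_k$; a short expansion using $\fT_{F_\Si}\fT_{F_\Si}^*=I-D_{\fT_{F_\Si}^*}^2$ shows the objective tends to $\|v\|^2$. Hence $S_a(x_0)=\|v\|^2=\|\oX_a x_0\|^2=\|H_a^{\half}x_0\|^2$, the last step because $H_a^{\half}=|\oX_a|$ and the partial isometry $U_a$ in $\oX_a=U_a|\oX_a|$ is an isometry.

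\emph{The core assertions.} These are the quickest. By construction $\oX_a$ is the closure of $X_a$, whose domain is $\im\bW_c$, and the original domain of any closable operator is a core for its closure; thus $\im\bW_c$ is a core for $\oX_a$. Since $\|\oX_a x\|=\||\oX_a|x\|=\|H_a^{\half}x\|$ on $\cD(\oX_a)=\cD(H_a^{\half})$, the operators $\oX_a$ and $H_a^{\half}$ carry the same graph norm, so $\im\bW_c$ is equally a core for $H_a^{\half}$. The dual assertion is verbatim: $\im\bW_o^*$ is the domain of $X_r$, hence a core for $\oX_r=\overline{X_r}$, and as $|\oX_r|=H_r^{-\half}$ has the same graph norm as $\oX_r$, it is a core for $H_r^{-\half}$.

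\emph{The lower bound in \eqref{form2}.} Fix $x_0\in\im\bW_c$, so $\uS_r(x_0)=\inf\{\|D_{\wtil\fT_{F_\Si}}\bu\|^2:\bu\in\cD(\bW_c),\ \bW_c\bu=x_0\}$ by \eqref{uSrOpForm}. By the adjoint factorization \eqref{fact3} every competitor $w:=D_{\wtil\fT_{F_\Si}}\bu$ lies in $\cD(\oX_r^*)$ and solves $\oX_r^*w=\bW_c\bu=x_0$. Its projection $w^*:=P_{\overline{\im\oX_r}}w$ is again in $\cD(\oX_r^*)$ with $\oX_r^*w^*=x_0$, and it is the unique solution lying in $\overline{\im\oX_r}$ (because $\overline{\im\oX_r}\cap\ker\oX_r^*=\{0\}$); writing $\oX_r=U_r|\oX_r|$ gives $x_0=|\oX_r|U_r^*w^*$, whence $\|w^*\|=\||\oX_r|^{-1}x_0\|=\|H_r^{\half}x_0\|$. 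As $w-w^*\in\ker\oX_r^*$ is orthogonal to $w^*$, each competitor obeys $\|w\|^2=\|w^*\|^2+\|w-w^*\|^2\ge\|H_r^{\half}x_0\|^2$, so $\uS_r(x_0)\ge\|H_r^{\half}x_0\|^2$.

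\emph{The upper bound in \eqref{form2}, and the main obstacle.} Adding an arbitrary $\bk\in\ker\bW_c$ to a fixed competitor $\bu_0$ leaves the $\overline{\im\oX_r}$-component unchanged, since $D_{\wtil\fT_{F_\Si}}\bk\in\ker\oX_r^*$; consequently
\[
\uS_r(x_0)=\|w^*\|^2+\inf_{\bk\in\ker\bW_c}\bigl\|P_{\ker\oX_r^*}D_{\wtil\fT_{F_\Si}}\bu_0+D_{\wtil\fT_{F_\Si}}\bk\bigr\|^2,
\]
so the upper bound $\uS_r(x_0)\le\|H_r^{\half}x_0\|^2$ is equivalent to the density identity $\overline{D_{\wtil\fT_{F_\Si}}(\ker\bW_c)}=\ker\oX_r^*$. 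One inclusion is routine, for $\bk\in\ker\bW_c$ gives $\oX_r^*D_{\wtil\fT_{F_\Si}}\bk=\bW_c\bk=0$, i.e.\ $D_{\wtil\fT_{F_\Si}}(\ker\bW_c)\subseteq\ker\oX_r^*$. The reverse inclusion is the hard part, and I expect it to be the main obstacle. I would prove it in perp form: an $\eta\in\ker\oX_r^*$ orthogonal to $D_{\wtil\fT_{F_\Si}}(\ker\bW_c)$ satisfies $D_{\wtil\fT_{F_\Si}}\eta\in(\ker\bW_c)^\perp=\overline{\im\bW_c^*}$, and I would combine this with $\eta\in(\overline{\im\oX_r})^\perp$, the factorization \eqref{fact2} $\bW_c^*|_{\im\bW_o^*}=D_{\wtil\fT_{F_\Si}}X_r$, and the density of $\im\bW_o^*$ as a core for $\bW_c^*$ to force $\eta=0$. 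Establishing that $\im\bW_o^*$ is large enough inside $\cD(\bW_c^*)$ for this closing argument is exactly where the minimality hypothesis \eqref{A} and the Hankel factorizations of Proposition \ref{P:HankelDecs} must be used in full strength.
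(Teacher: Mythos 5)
Your treatment of \eqref{form1} and of the two core assertions is correct and is essentially the paper's argument in different packaging: where you split the supremum into a coisometry upper bound (from $D_{\fT_{F_\Si}^*}^2+\fT_{F_\Si}\fT_{F_\Si}^*=I$) plus a lower bound along the test sequence $\bu_k=\fT_{F_\Si}^*g_k$, the paper performs a single completion of squares, $\| D_{\fT_{F_\Si}^{*}} X_{a} x_{0} + \fT_{F_\Si} \bu \|^{2} - \|\bu\|^2 = \|X_a x_0\|^2 - \|\fT_{F_\Si}^* X_a x_0 - D_{\fT_{F_\Si}}\bu\|^2$, and then kills the error term by choosing $\bu_k$ with $D_{\fT_{F_\Si}}\bu_k \to \fT_{F_\Si}^* X_a x_0$, using the intertwining $\fT_{F_\Si}^* D_{\fT_{F_\Si}^*} = D_{\fT_{F_\Si}}\fT_{F_\Si}^*$ to see $\fT_{F_\Si}^* X_a x_0 \in \overline{\im}\, D_{\fT_{F_\Si}}$; your cross-term computation (valid because $\fT_{F_\Si}\fT_{F_\Si}^*$ commutes with $D_{\fT_{F_\Si}^*}$) is the same calculation. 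Your core argument (domain of a closable operator is a core for its closure; $\oX_a$ and $|\oX_a|=H_a^\half$ share domain and graph norm) is verbatim the paper's, as is your lower bound for \eqref{form2}, which reproduces the paper's polar-decomposition identification of the optimal solution $\bv_{\textup{opt}} = U_r |\oX_r|^{-1} x_0$ of $\oX_r^*\bv = x_0$.

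The genuine gap is the upper bound in \eqref{form2}, i.e.\ half of one of the two main formulas: you correctly reduce it to the density identity $\overline{D_{\widetilde\fT_{F_\Si}}(\ker \bW_c)} = \ker \oX_r^*$ (and your equivalence claim is right, since the projection of the dense set $D_{\widetilde\fT_{F_\Si}}\cD(\bW_c)$ onto $\ker\oX_r^*$ is dense there), but you then only propose a strategy for the hard inclusion --- ``combine with \eqref{fact2} and the density of $\im\bW_o^*$ \dots to force $\eta=0$'' --- without carrying it out. The closing step of that strategy would need something like $\im \bW_o^*$ being a core for $\bW_c^*$, so that $\overline{\im \bW_c^*} = \overline{D_{\widetilde\fT_{F_\Si}}\, \im X_r}$; this is established nowhere in the paper and not in your sketch, and without it $D_{\widetilde\fT_{F_\Si}}\eta \in \overline{\im\bW_c^*}$ together with $\eta \perp \im X_r$ does not obviously force $\eta = 0$ (note $\eta$ need not lie in $\cD(\bW_c)$, so you cannot pair $\langle \eta, \bW_c^* x\rangle$ against $\bW_c\eta$). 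For comparison, the paper takes a different route past exactly this point: using \eqref{fact3} and $X_r^* = \oX_r^*$ it rewrites $\uS_r(x_0)$ as the constrained infimum \eqref{inf1}, parametrizes all solutions of $\oX_r^*\bv = x_0$ as $D_{\widetilde\fT_{F_\Si}}\bu_0 + \ker\oX_r^*$, and argues from $\ker\oX_r^* \subset \overline{\im}\,D_{\widetilde\fT_{F_\Si}}$ that the constraint $\bv \in D_{\widetilde\fT_{F_\Si}}\cD(\bW_c)$ can be relaxed to $\bv \in \cD(\oX_r^*)$ without changing the infimum, after which the relaxed problem is solved exactly; your density identity is precisely what validates that relaxation. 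So you have located the crux of the theorem accurately, but as submitted the proposal does not prove \eqref{form2}, only its lower-bound half.
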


\begin{proof}
By Lemma \ref{L:fact}, in the operator form of $S_a$ derived in Lemma
\ref{L:SaSrOpForm} we can replace $\bW_{o} x_{0}$ by
$D_{\frakT_{{F_\Si}}^{*}} X_{a} x_{0}$, leading to
\begin{equation}\label{sup}
S_{a}(x_{0}) = \sup_{ \bu \in \ell^{2}_{\cU}({\mathbb Z}_{+})}
\| D_{\frakT_{{F_\Si}}^{*}} X_{a} x_{0} + \frakT_{{F_\Si}} \bu
\|^{2}_{\ell^{2}_{\cY}({\mathbb
 Z}_{+})} - \| \bu \|^{2}_{\ell^{2}_{\cU}({\mathbb Z}_{+})}.
\end{equation}
For $x_0\in \im \bW_c$ and each $\bu\in\ell^2_\cU(\BZ_+)$ we have
\begin{align*}
&\| D_{\frakT_{{F_\Si}}^{*}} X_{a} x_{0} + \frakT_{{F_\Si}} \bu \|^{2}
- \| \bu \|^{2}=\\
&\qquad\qquad=\| D_{\frakT_{{F_\Si}}^{*}} X_{a} x_{0} \|^{2} +
2\,\text{Re}\,\langle D_{\frakT_{{F_\Si}}^{*}} X_{a} x_{0},
\frakT_{{F_\Si}} \bu \rangle  + \| \frakT_{{F_\Si}}\bu \|^{2}- \|\bu
\|^{2}\\
&\qquad\qquad=\| D_{\frakT_{{F_\Si}}^{*}} X_{a} x_{0} \|^{2} +
2\,\text{Re}\,\langle D_{\frakT_{{F_\Si}}^{*}} X_{a} x_{0},
\frakT_{{F_\Si}} \bu \rangle  - \| D_{\frakT_{{F_\Si}}}\bu \|^{2}\\
&\qquad\qquad=\| D_{\frakT_{{F_\Si}}^{*}} X_{a} x_{0} \|^{2} +
2\,\text{Re}\,\langle X_{a} x_{0}, D_{\frakT_{{F_\Si}}^{*}}
\frakT_{{F_\Si}} \bu \rangle - \| D_{\frakT_{{F_\Si}}} \bu \|^{2}  \\
&\qquad\qquad= \| D_{\frakT_{{F_\Si}}^{*}} X_{a} x_{0} \|^{2} +
2\,\text{Re}\,\langle X_{a} x_{0}, \frakT_{{F_\Si}}
D_{\frakT_{{F_\Si}}} \bu \rangle - \| D_{\frakT_{{F_\Si}}} \bu \|^{2}
\\
&\qquad\qquad=  \| D_{\frakT_{{F_\Si}}^{*}} X_{a} x_{0} \|^{2} +
2\,\text{Re}\,\langle \frakT_{{F_\Si}}^{*} X_{a} x_{0},
D_{\frakT_{{F_\Si}}} \bu \rangle - \| D_{\frakT_{{F_\Si}}} \bu \|^{2}
\\
&\qquad\qquad=  \| D_{\frakT_{{F_\Si}}^{*}} X_{a} x_{0} \|^{2} +
\|\frakT_{{F_\Si}}^{*} X_{a}
x_{0} \|^{2} - \| \frakT_{{F_\Si}}^{*} X_{a} x_{0} -
D_{\frakT_{{F_\Si}}} \bu \|^{2}\\
&\qquad\qquad=  \| X_{a} x_{0} \|^{2} - \| \frakT_{{F_\Si}}^{*} X_{a}
x_{0} -
D_{\frakT_{{F_\Si}}} \bu \|^{2}.
 \end{align*}
By construction, we have $\im X_{a} \subset (\kr
D_{\frakT_{{F_\Si}}^{*}})^{\perp} =\overline{\im}
D_{\frakT_{{F_\Si}}^{*}}$. Using that
$\frakT_{{F_\Si}}^{*}D_{\frakT_{{F_\Si}}^{*}}=D_{\frakT_{{F_\Si}}}
\frakT_{{F_\Si}}^{*}$, we obtain
$$
\frakT_{{F_\Si}}^{*} \overline{\im}
D_{\frakT_{{F_\Si}}^{*}}\subset \overline{\im} D_{\frakT_{{F_\Si}}}.
$$
Thus $\im \frakT_{{F_\Si}}^{*} X_{a} \subset \overline{\im}
D_{\frakT_{{F_\Si}}}$. Hence there is a sequence $\bu_{k}$  of input
signals in $\ell^{2}_{\cU}({\mathbb Z}_{+})$ so that  $\|
\frakT_{{F_\Si}}^{*} X_{a} x_{0} - D_{\frakT_{{F_\Si}}} \bu_{k} \|
\to 0$ as $k \to \infty$. We conclude that for $x_0\in \im \bW_c$ the
supremum in \eqref{sup} is given by
\[
S_{a}(x_{0}) = \| X_{a} x_{0} \|^{2}=\| \oX_{a} x_{0} \|^{2}=\|
H_a^{\half} x_{0} \|^{2}.
\]

Let $x_0\in \im\bW_c$. Given a $\bu \in\cD(\bW_c)$, by the
factorization \eqref{fact3} we see that
$\bW_c\bu=x_0$ if and only if $X_r^{*}
D_{\widetilde\frakT_{F_\Si}}\bu=x_0$. Therefore, we have
\begin{align*}
\uS_r(x_0)
&=\inf_{\bu \in \cD(\bW_{c}), \, X_r^{*}
D_{\widetilde\frakT_{F_\Si}}\bu=x_0}
\|  D_{\widetilde \frakT_{{F_\Si}}} \bu \|^{2}
=\inf_{\bv \in D_{\widetilde \frakT_{{F_\Si}}}\cD(\bW_{c}), \, X_r^{*}
\bv=x_0}
\|  \bv \|^{2}.
\end{align*}
A general property of operator closures is $\oX_{r}^{*} = X_{r}^{*}$.
Hence
\begin{equation}   \label{inf1}
\uS_{r}(x_{0}) = \inf_{\bv \in D_{\widetilde
\frakT_{{F_\Si}}}\cD(\bW_{c}), \, \oX_r^{*}\bv=x_0} \|
\bv \|^{2}.
\end{equation}
As $x_{0} \in \im \bW_{c}$ by assumption, the factorization
\eqref{fact3} gives us a $\bu_{0} \in \cD(\bW_{c})$ so that
\begin{equation}   \label{x0}
  x_{0} = \oX_{r}^{*} D_{\widetilde \frakT_{F_{\Sigma}}} \bu_{0}.
\end{equation}
In particular, $x_{0}$ has the form $x_{0} = \oX_{r}^{*} \bv_{0}$ with
$\bv_{0} \in D_{\widetilde \frakT_{F_{\Sigma}}} \cD(\bW_{c})$.  From \eqref{x0} we see that the general
solution $\bv \in \cD(\oX_{r}^{*})$ of $x_{0} = \oX_{r}^{*} \bv$ is
\begin{equation}   \label{gensol}
  \bv = D_{\widetilde \frakT_{F_{\Sigma}}} \bu_{0} + k
\text{ where } k \in \kr \oX_{r}^{*}.
\end{equation}
By construction the target space for $X_{r}$ (and $\oX_{r}$) is $\left(
\kr D_{\widetilde \frakT_{F_{\Sigma}}}\right)^{\perp}$ so the domain
space for $\oX_{r}^{*}$ is $( \kr D_{\widetilde
\frakT_{F_{\Sigma}}})^\perp$ and $\kr \oX_r^* \subset \overline{\im} D_{\widetilde \fT_{F_\Sigma}}$.
Hence the infimum in   \eqref{inf1} remains unchanged if we relax the
constraint $\bv \in D_{\widetilde \frakT_{F_{\Sigma}}} \cD(\bW_c)$ to just $\bv
\in \cD(\oX_{r}^{*})$, i.e.,
\begin{equation}   \label{inf2}
    \uS_{r}(x_{0}) = \inf_{ \bv \in \cD(\oX_{r}^{*}), \, \oX_{r}^{*} \bv
    = x_{0}} \| \bv\|^{2}.
\end{equation}
In terms of the polar decomposition $\oX_{r} = U_{r} | \oX_{r}|$ for
$\oX_{r}$, we have
$$
\oX_{r}^{*} =  | \oX_{r}| U_{r}^{*}
$$
with
$$
\cD(\oX_{r}^{*})  = \{ \bu \in \overline{\im} D_{\widetilde
\frakT_{F_{\Sigma}}} \colon U_{r}^{*} \bu \in \cD(|\oX_{r}|) =
\cD(\oX_{r}) \}.
$$
Since $|\oX_{r}|$ is injective and $U_{r}$ is an isometry with range
equal to $(\kr \oX_{r}^{*})^{\perp}$, the constraint $|\oX_{r}|
U_{r}^{*} \bv = \oX_{r}^{*} \bv = x_{0}$ is equivalent to
$$
 P_{(\kr \oX_{r}^{*})^{\perp}} \bv = U_{r} U_{r}^{*} \bv = U_{r}
 |\oX_{r}|^{-1} x_{0}.
$$
Since we want to minimize $\|\bv\|^2$ with $P_{(\kr
\oX_{r}^*)^\perp}\bv$
equal to $U_r|\oX_{r}^*|^{-1}x_0\in \cD(\oX_r)$, it is clear that
this is achieved at $\bv_\textup{opt}=U_r|\oX_{r}^*|^{-1}x_0$, so that
\[
\uS_r(x_0)=\|\bv_\textup{opt}\|^2=\|U_r|\oX_{r}|^{-1}x_0\|^2=\||\oX_{r}|^{-1}x_0\|^2
=\|H_r^\half x_0\|^2,
\]
as claimed.

It remains to verify the last assertion concerning the core properties of $\im \bW_c$ and $\im \bW_o^*$.
By definition $H_a^\half = | \overline{X}_a|$ where $\overline{X}_a$ is defined to be the closure of the $X_a =
\overline{X}_a|_{\im \bW_c}$.  Hence $\im \bW_c$ by definition is a core for $\overline{X}_a$ from which it immediately follows
that $\im \bW_c$ is a core for $H_a^\half = | \overline{X}_a |$.  That $\im \bW_o^*$ is a core for $H_r^{-\half} = | \overline{X}_r |$
follows in the same way via a dual analysis.
\end{proof}

\section{The dual system $\Si^*$}  \label{S:dual}

In this section we develop a parallel theory for the dual system $\Si^*$ of $\Si$, which is the system with system matrix equal to the adjoint of \eqref{sysmat} evolving in backward-time.

\subsection{Controllability, observability, minimality and transfer function for the dual system}

With the discrete-time linear system $\Sigma$ given by \eqref{dtsystem} with system matrix $M = \sbm{ A & B \\ C & D }$ we associate the
dual system $\Sigma^*$ with system matrix $M^* = \sbm{ A^* & C^* \\ B^* & D^* } \colon \sbm{ \cX \\ \cY} \to \sbm{ \cX \\ \cU}$.
It will be convenient for our formalism here to let the dual system evolve in backward time; we therefore define the system
$\Sigma^*$ to be given by the system input/state/output equations
\begin{equation}  \label{dtsystem*}
\Sigma^* \colon = \left\{ \begin{array}{rcl}
\bx_*(n-1) & = & A^* \bx_*(n) + C^* \bu_*(n), \\
\by_*(n) & = & B^* \bx_*(n) + D^* \bu_*(n).    \end{array}
\right.
\end{equation}
If we impose a final condition $\bx_*(-1) = x_0$ and feed in an input-sequence $\{ \bu(n) \}_{n \in {\mathbb Z}_-}$, one can solve recursively
to get, for $n \le -1$,
$$
\left\{ \begin{array}{rcl}
\bx_*(n) & = &  A^{* -n-1} x_0 + \sum_{j=n+1}^{-1} A^{* -n+j} C^* \bu_*(j), \\
  \by_*(n) & = & B^* A^{* -n-1} x_0 + \sum_{j = n+1}^{-1} B^* A^{* -n+j-1} C^* \bu_*(j) + D^* \bu_*(n).
\end{array}  \right.
$$
Alternatively, the $Z$-transform
$
\{\bx_*(n)\}_{n \in {\mathbb Z}_-} \mapsto \widehat \bx_*(\lambda) =
\sum_{n=-\infty}^{-1} \bx_*(n) \lambda^n
$
may be applied directly to the system equations \eqref{dtsystem*}.  Combining this with the observation that
\begin{align*}
\sum_{n=-\infty}^{-1} \bx_*(n-1) \lambda^n & = \lambda \left( \sum_{n=-\infty}^{-1} \bx_*(n-1) \lambda^{n-1} \right)
 = \lambda \left( \sum_{n=-\infty}^{-2} \bx_*(n) \lambda^{n} \right) \\
 & = \lambda \left( \widehat \bx_*(\lambda) - x_0 \lambda^{-1} \right) = \lambda \widehat \bx_*(\lambda) - x_0.
\end{align*}
converts the first system equation in \eqref{dtsystem*} to
$$
 \lambda \widehat \bx_*(\lambda) - x_0 = A^* \widehat \bx_*(\lambda) + C^* \widehat \bu_*(\lambda)
$$
leading to the $Z$-transformed version of the whole system:
$$
\left\{ \begin{array}{rcl}
  \widehat \bx_*(\lambda) & = & (\lambda I - A^*)^{-1} x_0 + (\lambda I - A^*)^{-1} C^* \widehat \bu_*(\lambda), \\
  \widehat \by_*(\lambda) & = & B^* (\lambda I - A^*)^{-1} x_0 + F_{\Sigma^*}(\lambda) \widehat \bu_*(\lambda),
\end{array} \right.
$$
where the {\em transfer function} $F_{\Sigma^*}(\lambda)$ for the system $\Sigma^*$ is then given by
\begin{align}
 F_{\Sigma^*}(\lambda) & = D^* + B^* (\lambda I - A^*)^{-1} C^*  \notag  \\
    & = D^* + \lambda^{-1} (I - \lambda^{-1} A^*)^{-1} C^*  = F_\Sigma(1/\overline{\lambda})^*  \label{transfunc*}
\end{align}
which is an analytic function on a neighborhood of the point at $\infty$ in the complex plane.  Moreover, $F_{\Sigma^*}$ has analytic
continuation to a function analytic on the exterior of the unit disk ${\mathbb D}_e : = \{ \lambda \in {\mathbb C} \colon |\lambda| > 1\}
 \cup \{\infty\}$ exactly when $F_\Sigma$ has analytic continuation to a function analytic on the unit disk ${\mathbb D}$ with equality of
 corresponding $\infty$-norms:
 $$
 \| F_{\Sigma_*}  \|_{\infty, {\mathbb D}_e} : = \sup_{\lambda \in {\mathbb D}_e}
    \| F_{\Sigma_*}(\lambda) \| =
  \sup_{\lambda \in {\mathbb D}}  \| F_\Sigma(\lambda) \|  =: \| F_\Sigma \|_{\infty, {\mathbb D}}.
 $$

All the analysis done up to this point for the system $\Sigma$ has a dual analogue for the system $\Sigma^*$.
In particular, the observability operator $\bW_{*o}$ for the dual system is obtained by running the system \eqref{dtsystem*}
with final condition $\bx_*(-1) = x_0$ and input string $\bu_*(n) = 0$ for $n \le -1$, resulting in the output string
$\{ B^* A^{* (-n-1)} x_0 \}_{n \in {\mathbb Z}_-}$.  Since we are interested in a setting with operators on $\ell^2$,
we define the {\em observability operator} $\bW_{*o}$ for $\Sigma^*$ to have domain
$$
  \cD(\bW)_{*o} = \{ x_0 \in \cX \colon \{ B^* A^{*(- n-1)} x_0\}_{n\in\BZ_-}  \in \ell^2_\cU({\mathbb Z}_-)\}
  $$
  with action given by
$$
  \bW_{*o} x_0 = \{ B^* A^{* (-n-1)} x_0 \}_{n \in {\mathbb Z}_-} \text{ for } x_0 \in \cD(\bW_{*o}).
$$
Note that $\bW_{*o}$ so defined is exactly the same as the adjoint controllability operator $\bW_c^*$ for the original system
\eqref{bWc*1}--\eqref{bWc*2}, and in fact viewing this operator as $\bW_{*o}$ gives a better control-theoretic interpretation
for this operator.   Similarly it is natural to define the adjoint controllability operator for the adjoint system $(\bW_{*c})^*$
by
$$
 \cD((\bW_{*c})^*) = \{ x_0 \in \cX \colon \{ C A^n x_0 \}_{n \in {\mathbb Z}_+} \in \ell^2_\cY({\mathbb Z}_+)\}
  = \cD(\bW_o)
$$
with action given by
$$
  \bW_{*c}^* x_0 = \{ C A^n x_0 \}_{n \in {\mathbb Z}_+} = \bW_o x_0.
$$
In view of the equalities
\begin{equation}   \label{identifications}
  \bW_{*o} = \bW_c^*, \quad (\bW_{*c})^* = \bW_o, \quad (\bW_{*o})^* = \bW_c, \quad \bW_{*c} = \bW_o^*,
\end{equation}
one can work out the dual analogue of Proposition \ref{P:WcWo'}, either by redoing the original proof with the backward-time
system $\Sigma^*$ in place of the forward-time system $\Sigma$, or simply by making the substitutions \eqref{identifications}
in the statement of the results.

Let us now assume that $F_\Sigma$ has analytic continuation to a bounded analytic $\cL(\cU, \cY)$-valued function on the unit disk,
or equivalently,  $F_{\Sigma^*}$ has analytic continuation to a bounded analytic  $\cL(\cY, \cU)$-valued function on the exterior of
the unit disk ${\mathbb D}_e$.  Then $F_\Sigma$ and $F_{\Sigma^*}$ can be identified
via strong nontangential boundary-value limits with $L^\infty$-functions on the unit circle ${\mathbb T}$;
the relations between these boundary-value functions is simply
$$
   F_{\Sigma^*}(\lambda) = F_\Sigma(\lambda)^* \quad (\mbox{a.e. } \la\in\BT)
$$
with the consequence that the associated multiplication operators
$$ M_{F_\Sigma} \colon L^2_\cU({\mathbb T}) \to L^2_\cY({\mathbb T}), \quad
M_{F_{\Sigma^*}} \colon L^2_\cY({\mathbb T}) \to L^2_\cU({\mathbb T})
$$
given by
$$
 M_{F_\Sigma} \colon \widehat \bu(\lambda) \mapsto \widehat  F_\Sigma(\lambda) \cdot \widehat \bu(\lambda), \quad
 M_{F_{\Sigma^*}} \colon \widehat \bu_*(\lambda) \mapsto \widehat  F_{\Sigma_*}(\lambda) \cdot \widehat \bu_*(\lambda)
 $$
 are adjoints of each other:
$$
  (M_{F_\Sigma})^* = M_{F_{\Sigma^*}}.
$$
Note also that $M_{F_\Sigma}$ maps $H^2_\cU({\mathbb D})$ into $H^2_\cY({\mathbb D})$ while
$M_{F_{\Sigma^*}} = M_{F_\Sigma}^*$ maps
$(H^2_\cY)^\perp: = L^2_\cY({\mathbb T}) \ominus H^2_\cY({\mathbb D}) \cong H^2_\cY({\mathbb D}_e)$ into
$(H^2_\cU)^\perp := L^2_\cU({\mathbb T}) \ominus H^2_\cU({\mathbb D}) \cong H^2_\cU({\mathbb D}_e)$.

It is natural to define the frequency-domain Hankel operator ${\mathbb H}_{F_{\Sigma^*}}$ for the adjoint system
as the operator from $H^2_\cY({\mathbb D}_e)^\perp = H^2_\cY({\mathbb D})$ (the past from the point of view of the
backward-time system $\Sigma^*$) to $H^2_\cU({\mathbb D}_e) =H^2_\cU({\mathbb D})^\perp$
(the future from the point of view of $\Sigma^*$) by
\begin{equation}  \label{Hankel-identification}
  {\mathbb H}_{F_{\Sigma^*}} = P_{H^2_\cU({\mathbb D})^\perp} M_{F_{\Sigma^*}}|_{H^2_\cY({\mathbb D})} =
  ( {\mathbb H}_{F_\Sigma})^*.
\end{equation}
After application of the inverse $Z$-transform, we see that the time-domain version $\fH_{F_{\Sigma^*}}$  of the Hankel
operator for $\Sigma^*$ is just the adjoint $(\fH_{F_\Sigma})^*$ of the time-domain version of the Hankel operator for $\Sigma$,
namely
$$
 \fH_{F_{\Sigma^*}} = [ B^* A^{*(-i + j -1)} C^* ]_{i < 0, j \ge 0} \colon \ell^2_\cY({\mathbb Z}_+) \to \ell^2_\cU({\mathbb Z}_-).
$$
from which we see immediately the formal factorization
\begin{equation}  \label{Hankel-fact*}
\fH_{F_{\Sigma^*}} = \operatorname{col}_{i<0} [B^* A^{*( -i -1)}] \cdot
\operatorname{row}_{j \ge 0} [A^{*j} C^* ] = \bW_{*o} \bW_{*c} = \bW_c^* \bW_o^*.
\end{equation}
With all these observations in place, it is straightforward to formulate the dual version of Proposition \ref{P:HankelDecs},
again, either by redoing the proof of Proposition \ref{P:HankelDecs} with the backward-time system $\Sigma^*$ in place of
the forward-time system $\Sigma$, or by simply substituting the identifications
\eqref{identifications} and \eqref{Hankel-identification}.

Note next that an immediate consequence of the identifications \eqref{identifications} is that
$\ell^2$-exact controllability for $\Sigma$ is the same as $\ell^2$-exact observability for $\Sigma^*$
and $\ell^2$-exact observability for $\Sigma$ is the same as $\ell^2$-exact controllability for $\Sigma^*$.
With this observation in hand, the dual version of Proposition \ref{P:ell2implics} is immediate.

\subsection{Storage functions for the adjoint system}

Let $S_*$ be a function from $\cX$ to $[0, \infty]$.  In parallel with what is done in Section \ref{S:Storage}, we define
$S_*$ to be  a {\em storage function for the system $\Sigma^*$} if
\begin{equation}   \label{disineq*}
  S_*(\bx_*(n-1))) \le S_*(\bx_*(n)) + \| \bu_*(n) \|^2 - \| \by_*(n) \|^2_\cY \text{ for } n \le N_0
\end{equation}
holds over all system trajectories $(\bu_*(n), \bx_*(n), \by_*(n))_{n \le N_0}$  of the system $\Sigma^*$ in \eqref{dtsystem*} with state initialization
$\bx_*(N_0) = x_0$ for some $x_0 \in \cX$ at some $N_0 \in {\mathbb Z}$, and $S_*$ is normalized to satisfy
\begin{equation}   \label{normalization*}
  S_*(0) = 0.
\end{equation}
Then by redoing the proof of
Proposition \ref{P:storage-Schur} with the backward-time system $\Sigma_*$ in place of the forward-time system $\Sigma$,
we arrive at the following dual version of Proposition \ref{P:storage-Schur}.

\begin{proposition}   \label{P:storage-Schur*}  Suppose that the system $\Sigma^*$ in \eqref{dtsystem*} has a storage function
$S_*$ as in \eqref{disineq*} and \eqref{normalization*}.  Then the transfer function $F_{\Sigma^*}$ of $\Sigma^*$ defined by
\eqref{transfunc*} has an analytic continuation to the exterior unit disk ${\mathbb D}_e$ in the Schur class
$\cS_{{\mathbb D}_e}(\cY, \cU)$.
\end{proposition}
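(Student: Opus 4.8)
The plan is to mirror the proof of Proposition~\ref{P:storage-Schur} step for step, transporting every ingredient to the backward-time system $\Sigma^*$ of \eqref{dtsystem*} and to the exterior disk $\BD_e$. The engine of the forward proof is the input/output estimate of Lemma~\ref{L:finH2}, so the first thing I would do is record its dual analogue: for every trajectory $(\bu_*(n),\bx_*(n),\by_*(n))_{n\le N_0}$ of $\Sigma^*$ with zero final condition $\bx_*(N_0)=0$, one has
\begin{equation*}
\sum_{n=N_0-N}^{N_0}\|\by_*(n)\|^2 \le \sum_{n=N_0-N}^{N_0}\|\bu_*(n)\|^2 \qquad (N\ge 0).
\end{equation*}

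To prove this, I would argue exactly as in Lemma~\ref{L:finH2}, but summing the backward dissipation inequality \eqref{disineq*} in the direction of \emph{decreasing} $n$. By the translation invariance of \eqref{dtsystem*} there is no loss in taking $N_0=-1$. Telescoping \eqref{disineq*} from $n=-1$ down to $n=-1-N$, the left-hand side collapses to $S_*(\bx_*(-2-N))-S_*(\bx_*(-1))$, which equals $S_*(\bx_*(-2-N))\ge 0$ by the zero final condition $\bx_*(-1)=0$ together with the normalization \eqref{normalization*} and the nonnegativity of $S_*$. This yields $0\le \sum_{n=-1-N}^{-1}\bigl(\|\bu_*(n)\|^2-\|\by_*(n)\|^2\bigr)$, which is the asserted estimate.

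With the estimate in hand, I would feed an arbitrary input $\bu_*\in\ell^2_\cY(\BZ_-)$ into $\Sigma^*$ with zero final condition $\bx_*(-1)=0$ and let $N\to\infty$, concluding that the resulting output satisfies $\by_*\in\ell^2_\cU(\BZ_-)$ with $\|\by_*\|^2\le\|\bu_*\|^2$. Applying the $Z$-transform, which identifies $\ell^2_\cY(\BZ_-)$ with $H^2_\cY(\BD_e)\cong (H^2_\cY)^\perp$, and using that the final condition is zero, the $Z$-transformed system gives $\widehat\by_*(\lambda)=F_{\Sigma^*}(\lambda)\widehat\bu_*(\lambda)$ on a neighborhood of $\infty$ (cf. the derivation preceding \eqref{transfunc*}). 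Thus the multiplication operator $M_{F_{\Sigma^*}}$ carries $H^2_\cY(\BD_e)$ into $H^2_\cU(\BD_e)$ with norm at most $1$; feeding in an input supported at the single time $n=-1$ (the lowest-mode analogue of a constant input in the forward case) exhibits the analytic continuation of $F_{\Sigma^*}$ to $\BD_e$, and the standard identity between the operator norm of $M_{F_{\Sigma^*}}$ and the supremum norm $\|F_{\Sigma^*}\|_{\infty,\BD_e}$ then forces $\|F_{\Sigma^*}\|_{\infty,\BD_e}\le 1$, i.e.\ $F_{\Sigma^*}\in\cS_{\BD_e}(\cY,\cU)$.

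The argument is essentially a verbatim mirror image of the forward case, so I do not expect any genuinely new difficulty; the one place demanding care is the bookkeeping of the reversed time direction---making sure the telescoping sum of \eqref{disineq*} runs in the correct direction and that the vanishing condition is imposed at the terminal time $N_0$---together with the correct identification of $\ell^2_\cU(\BZ_-)$ and $\ell^2_\cY(\BZ_-)$ with the Hardy spaces of the exterior disk $\BD_e$ under the $Z$-transform.
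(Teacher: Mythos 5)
Your proposal is correct and is exactly the argument the paper intends: the paper gives no separate proof of Proposition \ref{P:storage-Schur*}, stating only that one redoes the proof of Proposition \ref{P:storage-Schur} with the backward-time system $\Sigma^*$ in place of $\Sigma$, and your write-up supplies precisely that dualization (the backward telescoping of \eqref{disineq*} from the terminal condition, the resulting contractivity of $M_{F_{\Sigma^*}}$ from $H^2_\cY(\BD_e)$ to $H^2_\cU(\BD_e)$, and the identification of the operator norm with $\|F_{\Sigma^*}\|_{\infty,\BD_e}$). The only point to make explicit, as in Lemma \ref{L:finH2}, is the inductive verification that $S_*(\bx_*(n))<\infty$ at each step before rearranging the dissipation inequality, since $S_*$ is a priori $[0,\infty]$-valued; this is routine and does not affect the argument.
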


Note that by the duality considerations already discussed above, an equivalent conclusion is that $F_\Sigma$ has analytic
continuation to the unit disk in the Schur class $\cS(\cU, \cY)$ over the unit disk.

We say that $S_*$ is a {\em quadratic storage function}  for $\Sigma^*$ if $S_*$ is a storage function of the form
\begin{equation}  \label{QuadStorage1*}
  S_*(x) = S_{H_*}(x) = \begin{cases}
    \| H_*^\half x \|^2 &\text{for } x \in \cD(H_*^\half), \\
    +\infty &\text{otherwise.}  \end{cases}
\end{equation}
where $H_*$ is a (possibly) unbounded positive-semidefinite operator on $\cX$.    To analyze quadratic storage functions for
$\Sigma^*$, we introduce the adjoint KYP-inequality: we say that the bounded selfadjoint operator $H$ on $\cX$ satisfies the
{\em adjoint KYP-inequality} if
\begin{equation}  \label{KYP1*}
  \begin{bmatrix} A & B \\ C & D \end{bmatrix} \begin{bmatrix} H_* & 0 \\ 0 & I_\cU \end{bmatrix}
  \begin{bmatrix} A & B \\ C & D \end{bmatrix}^* \preceq \begin{bmatrix} H_* & 0 \\ 0 & I_\cU \end{bmatrix}.
\end{equation}
More generally, for a  (possibly) unbounded positive-semidefinite operator $H_*$ on $\cX$, we say that $H_*$
satisfies the {\em generalized KYP-inequality} if, for all $x \in \cD(H_*^\half)$ we have
\begin{equation}   \label{KYP1b'*}
A^* \cD(H_*^\half) \subset \cD(H_*^\half), \quad C^* \cY \subset \cD(H_*^\half),
\end{equation}
and for all $x_* \in \cD(H_*^\half)$ and $u_* \in \cY$ we have
\begin{equation}   \label{KYP1b*}
  \left\| \begin{bmatrix} H_*^\half & 0 \\ 0 & I_\cY \end{bmatrix} \begin{bmatrix} x_* \\ u_* \end{bmatrix} \right\|^2
  - \left\| \begin{bmatrix} H_*^\half & 0 \\ 0 & I_\cU \end{bmatrix}
 \begin{bmatrix} A^* & C^* \\ B^* & D^* \end{bmatrix} \begin{bmatrix} x_* \\ u_* \end{bmatrix} \right\|^2
 \ge 0.
 \end{equation}
 Then the dual version of Proposition \ref{P:QuadStorage} is straightforward.

 \begin{proposition} \label{P:QuadStorage*} Suppose that the function $S_*$ has the form \eqref{QuadStorage1*}
  for a (possibly) unbounded positive-semidefinite operator $H_*$ on $\cX$.   Then $S_*$ is a storage function for $\Sigma^*$ if and only if
 $H_*$ is a solution of the generalized adjoint-KYP inequality \eqref{KYP1b'*}--\eqref{KYP1b*}.  In particular,
 $S_*$ is a finite-valued storage function for $\Sigma^*$ if and only if $H$ is a bounded positive-semidefinite operator
 satisfying the adjoint KYP-inequality \eqref{KYP1*}.
  \end{proposition}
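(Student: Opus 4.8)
The plan is to mirror the proof of Proposition \ref{P:QuadStorage} for the forward system $\Sigma$, transported to the backward-time system $\Sigma^*$ via the correspondences $A \leftrightarrow A^*$, $B \leftrightarrow C^*$, $C \leftrightarrow B^*$, $D \leftrightarrow D^*$, together with the role-reversal of the input/output spaces $\cU \leftrightarrow \cY$. Rather than appeal to a formal duality meta-principle, I would carry out the two implications directly, since the backward-time evolution in \eqref{dtsystem*} means the single-step trajectory bookkeeping differs slightly from the forward case and is worth checking explicitly.

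First, for the sufficiency direction, assume $H_*$ solves the generalized adjoint-KYP inequality \eqref{KYP1b'*}--\eqref{KYP1b*}. Normalization $S_{H_*}(0) = \|H_*^\half 0\|^2 = 0$ is immediate, so it remains to verify the dissipation inequality \eqref{disineq*}. Fix a system trajectory $(\bu_*(n), \bx_*(n), \by_*(n))_{n \le N_0}$ of $\Sigma^*$ and an index $n \le N_0$. If $\bx_*(n) \notin \cD(H_*^\half)$, then $S_{H_*}(\bx_*(n)) = +\infty$ and \eqref{disineq*} holds trivially; if $\bx_*(n) \in \cD(H_*^\half)$, then the domain conditions \eqref{KYP1b'*} together with the first equation of \eqref{dtsystem*} force $\bx_*(n-1) = A^* \bx_*(n) + C^* \bu_*(n) \in \cD(H_*^\half)$. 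Substituting $x_* = \bx_*(n)$ and $u_* = \bu_*(n)$ into \eqref{KYP1b*} and rewriting the two output terms using the system equations \eqref{dtsystem*} yields
\[
S_{H_*}(\bx_*(n)) + \|\bu_*(n)\|^2 - S_{H_*}(\bx_*(n-1)) - \|\by_*(n)\|^2 \ge 0,
\]
which is exactly \eqref{disineq*}.

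For the converse, assume $S_{H_*}$ is a storage function for $\Sigma^*$. Given arbitrary $x_* \in \cX$ and $u_* \in \cY$, I would run $\Sigma^*$ with $N_0 = 0$, final condition $\bx_*(0) = x_*$, and input $\bu_*(0) = u_*$, so that $\bx_*(-1) = A^* x_* + C^* u_*$ and $\by_*(0) = B^* x_* + D^* u_*$. The dissipation inequality \eqref{disineq*} at $n = 0$ then reads
\[
S_{H_*}(A^* x_* + C^* u_*) \le S_{H_*}(x_*) + \|u_*\|^2 - \|B^* x_* + D^* u_*\|^2.
\]
Finiteness of the right-hand side when $x_* \in \cD(H_*^\half)$ forces $A^* x_* + C^* u_* \in \cD(H_*^\half)$; specializing $u_* = 0$ gives $A^* \cD(H_*^\half) \subset \cD(H_*^\half)$ and specializing $x_* = 0$ gives $C^* \cY \subset \cD(H_*^\half)$, which together are \eqref{KYP1b'*}. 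Moving $\|B^* x_* + D^* u_*\|^2$ to the other side and writing out $S_{H_*}$ recovers \eqref{KYP1b*}.

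Finally, for the ``in particular'' claim, note that $S_*$ is finite-valued exactly when $\cD(H_*^\half) = \cX$; since $H_*$ is positive-semidefinite (hence selfadjoint) its square root $H_*^\half$ is closed, so the Closed Graph Theorem then makes $H_*^\half$, and hence $H_*$, bounded. In that case the spatial inequality \eqref{KYP1b*} holds on all of $\cX$ and is just the bounded adjoint KYP-inequality \eqref{KYP1*}; combining this equivalence with the main assertion gives the stated iff. The only points requiring genuine care are the correct backward-time single-step construction and the $+\infty$ bookkeeping in the sufficiency argument, both of which are routine adaptations of the forward-system proof, so I expect no real obstacle here.
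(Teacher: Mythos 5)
Your proof is correct and is essentially the argument the paper intends: the paper simply declares Proposition \ref{P:QuadStorage*} to be the ``straightforward'' dual of Proposition \ref{P:QuadStorage}, and what you have written is exactly that dualization carried out explicitly for the backward-time system, including the correct single-step bookkeeping $\bx_*(n-1)=A^*\bx_*(n)+C^*\bu_*(n)$ and the $+\infty$ case. The Closed Graph Theorem argument for the ``in particular'' clause likewise matches the paper's handling of the analogous boundedness claim in Proposition \ref{P:strictQuadStorage}.
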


  We next discuss a direct connection between positive-definite solutions $H$ of the KYP-inequality \eqref{KYP1}
  and positive-definite solutions $H_*$ of the adjoint KYP-inequality \eqref{KYP1*}.
First let us suppose that $H$ is a bounded strictly positive-definite solution of the KYP-inequality \eqref{KYP1}.  Set
$$
Q = \begin{bmatrix} H^\half & 0 \\ 0 & I_\cY\end{bmatrix}  \begin{bmatrix} A & B \\ C & D \end{bmatrix} \begin{bmatrix}
H^{-\half} & 0 \\ 0 & I_\cU \end{bmatrix}.
$$
 Then the KYP-inequality \eqref{KYP1} is equivalent to $Q^* Q \preceq I$, i.e., the
fact that the operator $Q \colon \sbm{ \cX \\ \cU} \to \sbm{ \cX \\ \cY}$ is a contraction operator. But then the adjoint $Q^*$ of $Q$ is
also a contraction operator, i.e., $Q Q^* \preceq I$.  Writing out
$$
Q^* = \begin{bmatrix} H^{-\half} & 0 \\ 0 & I_\cU \end{bmatrix}
\begin{bmatrix} A^* & C^* \\ B^* & D^* \end{bmatrix} \begin{bmatrix} H^\half & 0 \\ 0 & I_\cY \end{bmatrix}
$$
and rearranging gives
$$
\begin{bmatrix} A & B \\ C & D \end{bmatrix} \begin{bmatrix} H^{-1} & 0 \\ 0 & I_\cU \end{bmatrix}
\begin{bmatrix} A^* & C^* \\ B^* & D^* \end{bmatrix} \preceq \begin{bmatrix} H^{-1} & 0 \\ 0 & I_\cY \end{bmatrix},
$$
i.e.,  $H_* := H^{-1}$ is a solution of the adjoint KYP-inequality \eqref{KYP1*} for the adjoint system $\Sigma_*$.
Conversely, by flipping the roles of $\Sigma$ and $\Sigma^*$ and using that $\Sigma^{**} = \Sigma$,
we see that if $H_*$ is a bounded, strictly positive-definite solution of the adjoint KYP-inequality \eqref{KYP1*},
then $H : = H_*^{-1}$ is a bounded, strictly positive-definite solution of the KYP-inequality \eqref{KYP1}.

The same correspondence between solutions of the generalized KYP-inequality \eqref{KYP1b'}--\eqref{KYP1b} for $\Sigma$
and solutions of the generalized KYP-inequality for the adjoint system
\eqref{KYP1b'*}--\eqref{KYP1b*}  continues to hold, but the details are
more delicate, as explained in the following proposition.  For an alternative proof see Proposition 4.6 in \cite{AKP06}.

\begin{proposition}  \label{P:KYPduality}  Suppose $\Sigma$ in \eqref{dtsystem} is a linear system with system matrix
$M = \sbm{ A & B \\ C & D}$ while $\Sigma^*$ is the adjoint system \eqref{dtsystem*} with system matrix $M^* =
\sbm{ A^* & C^* \\ B^* & D^* }$.  Then the {\rm(}possibly unbounded{\rm)} positive-definite operator $H$ is a solution of the generalized
KYP-inequality \eqref{KYP1b'}--\eqref{KYP1b} for $\Sigma$ if and only if $H^{-1}$ is a positive-definite solution of
the generalized KYP-inequality \eqref{KYP1b'*}--\eqref{KYP1b} for $\Sigma^*$.
 \end{proposition}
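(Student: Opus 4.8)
The plan is to recast each generalized KYP-inequality as the contractivity of a single explicit operator assembled from the system matrix and a symmetric weighting, and then to move between the two inequalities by passing to the Hilbert-space adjoint. Write $K = H^\half$, a closed, densely defined, injective, positive selfadjoint operator, with selfadjoint positive inverse $K^{-1} = H^{-\half}$ defined on the dense domain $\cD(K^{-1}) = \im K$. On the dense domain $\cD(K^{-1}) \oplus \cU$ I would introduce
\[
Q \begin{bmatrix} \xi \\ u \end{bmatrix} = \begin{bmatrix} K A K^{-1}\xi + K B u \\ C K^{-1}\xi + D u \end{bmatrix},
\]
the formal similarity $\operatorname{diag}(K,I_\cY)\,M\,\operatorname{diag}(K^{-1},I_\cU)$. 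Setting $x = K^{-1}\xi$, one sees that ``$Q$ is well defined on all of $\cD(K^{-1})\oplus\cU$'' is exactly the pair of domain conditions \eqref{KYP1b'} (it forces $Ax\in\cD(K)$ and $Bu\in\cD(K)$), while the spatial inequality \eqref{KYP1b} becomes precisely $\|Q v\|^2 \le \|v\|^2$ for all $v\in\cD(Q)$. Thus $H$ solves the generalized KYP-inequality for $\Sigma$ if and only if $Q$ is a densely defined contraction, hence extends to a bounded contraction $\overline Q$ on $\cX\oplus\cU$ with bounded contractive adjoint $Q^* = \overline Q{}^*$ on $\cX\oplus\cY$.

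The crux is to identify $Q^*$ with the operator attached to $\Sigma^*$ and the weight $H^{-1}$, and in particular to \emph{derive} (not assume) the adjoint domain conditions \eqref{KYP1b'*}. I would fix $\eta\in\cD(K)$, $u_*\in\cY$, set $x_* = K\eta \in \cD(K^{-1})$, and compute, using only selfadjointness of $K$ (to move $K$ across inner products of vectors in $\cD(K)$), boundedness of $A,B,C,D$, and the $\Sigma$-side conditions \eqref{KYP1b'},
\[
\left\langle Q\begin{bmatrix}\xi\\ u\end{bmatrix}, \begin{bmatrix}\eta\\ u_*\end{bmatrix}\right\rangle = \langle K^{-1}\xi,\, A^* x_* + C^* u_*\rangle + \langle u,\, B^* x_* + D^* u_*\rangle
\]
for all $\xi\in\cD(K^{-1})$, $u\in\cU$. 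Since the left-hand side equals $\langle (\xi,u), Q^*(\eta,u_*)\rangle$ and $Q^*$ is bounded, the map $\xi\mapsto\langle K^{-1}\xi, A^*x_*+C^*u_*\rangle$ is a bounded functional on $\cD(K^{-1})$; by selfadjointness of $K^{-1}$ this forces $A^*x_*+C^*u_*\in\cD(K^{-1})$. Taking $u_*=0$ and then $\eta=0$ yields exactly \eqref{KYP1b'*}, i.e. $A^*\cD(K^{-1})\subset\cD(K^{-1})$ and $C^*\cY\subset\cD(K^{-1})$, and the displayed identity then reads $Q^*\sbm{\eta \\ u_*} = \sbm{K^{-1}(A^*K\eta + C^*u_*) \\ B^*K\eta + D^*u_*}$ on $\cD(K)\oplus\cY$.

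To finish I would observe that this last expression is the analogue of $Q$ for $\Sigma^*$ with weight $H_* = H^{-1}$ (note $H_*^\half = K^{-1}$); being the restriction of the bounded contraction $Q^*$, it is a contraction, and unwinding the substitution $x_* = K\eta$ (so $\eta = K^{-1}x_*$ runs over $\cD(K)$ as $x_*$ runs over $\cD(K^{-1})$) shows that this contractivity is precisely \eqref{KYP1b*} for $H^{-1}$. Positive-definiteness of $H^{-1}$ is immediate from $\langle H^{-1}y,y\rangle = \langle x,Hx\rangle > 0$ for $y = Hx \neq 0$. The reverse implication then costs nothing: applying the forward direction to $\Sigma^*$ in place of $\Sigma$ and invoking $\Sigma^{**}=\Sigma$ and $(H^{-1})^{-1}=H$ returns the solution $H$ for $\Sigma$. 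I expect the genuinely delicate step to be the adjoint computation of the middle paragraph: the domain inclusions \eqref{KYP1b'*} must be squeezed out of the boundedness of $Q^*$, and every transfer of $K$ or $K^{-1}$ across an inner product is licit only because of selfadjointness together with the correct domain memberships, so the bookkeeping there — avoiding any circular use of the conclusion — is where the care lies.
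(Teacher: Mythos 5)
Your proposal is correct and follows essentially the same route as the paper's proof: both encode the generalized KYP-inequality for $\Sigma$ as contractivity of the densely defined operator $Q = \operatorname{diag}(H^{\half},I)\,M\,\operatorname{diag}(H^{-\half},I)$, pass to the bounded contractive adjoint $Q^*$, extract the domain conditions \eqref{KYP1b'*} from the identity $\langle Q v, w\rangle = \langle v, M^* w\rangle$ together with selfadjointness of $H^{\pm\half}$, and identify $Q^*$ with the corresponding operator for $\Sigma^*$ weighted by $H^{-1}$, the converse following by the symmetry $\Sigma^{**}=\Sigma$. Your middle paragraph merely makes explicit the adjoint-domain bookkeeping that the paper compresses into \eqref{Q*1}--\eqref{Q*2}.
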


 \begin{proof}
 Suppose that the positive-definite operator $H$ with dense domain $\cD(H)$ in $\cX$ solves the generalized
 KYP-inequality \eqref{KYP1b'}--\eqref{KYP1b}.  Define an operator $Q \colon \sbm{ \im H^\half \\ \cU } \to \sbm{ \im H^\half \\ \cY}$
 by
 $$
   Q \colon \begin{bmatrix} H^\half & 0 \\ 0 & I_\cU \end{bmatrix} \begin{bmatrix} x \\ u \end{bmatrix} \mapsto
   \begin{bmatrix} H^\half & 0 \\ 0 & I_\cY \end{bmatrix} \begin{bmatrix} A & B \\ C & D \end{bmatrix} \begin{bmatrix} x \\ u \end{bmatrix}
$$
for $x \in \cD(H^\half)$ and $u \in \cU$.  We can write the formula for $Q$ more explicitly in terms of $x' = H^\half x \in \im H^\half$
as
$$
  Q \colon \begin{bmatrix} x' \\ u \end{bmatrix} \mapsto \begin{bmatrix} H^\half & 0 \\ 0 & I_\cY \end{bmatrix}
  \begin{bmatrix} A & B \\ C & D \end{bmatrix} \begin{bmatrix} H^{-\half} & 0 \\ 0 & I_\cU \end{bmatrix} \begin{bmatrix}
  x' \\ u \end{bmatrix}
 $$
 for $x' \in \im H^\half$ and $u \in \cU$.
The content of the generalized KYP-inequality \eqref{KYP1b'}--\eqref{KYP1b} is that $Q$ is a well-defined contraction operator
from $\sbm{ \im H^\half \\ \cU}$ into $\sbm{\cX \\ \cY}$ and hence has a uniquely determined contractive extension to a
contraction operator from $\sbm{ \cX \\ \cU}$ to $\sbm{ \cX \\ \cY}$.  Let us now choose arbitrary vectors
$x \in \cD(H^\half)$, $x_* \in \cD(H^{-\half}) = \im H^\half$, $u \in \cU$, $u_* \in \cY$ and set
$x' = H^\half x$, $x_*' = H^{-\half} x_*$. Then we compute on the one hand
 \begin{align*}
 \left \langle \begin{bmatrix} A & B \\ C & D \end{bmatrix} \begin{bmatrix} x \\ u \end{bmatrix}, \begin{bmatrix} x_* \\ u_* \end{bmatrix}
 \right\rangle & =
 \left\langle \begin{bmatrix} A & B \\ C & D \end{bmatrix} \begin{bmatrix} H^{-\half} & 0 \\ 0 & I \end{bmatrix}
 \begin{bmatrix} x' \\ u \end{bmatrix}, \begin{bmatrix} H^\half x_*' \\ u_* \end{bmatrix} \right\rangle \\
 & =
 \left\langle \begin{bmatrix} H^\half  & 0 \\ 0 & I \end{bmatrix} \begin{bmatrix} A & B \\ C & D \end{bmatrix}
  \begin{bmatrix} H^{-\half} & 0 \\ 0 & I \end{bmatrix} \begin{bmatrix} x' \\ u \end{bmatrix}, \,
  \begin{bmatrix}  x_*' \\ u_* \end{bmatrix} \right\rangle  \\
  & =  \left\langle Q \begin{bmatrix} x' \\ u \end{bmatrix}, \, \begin{bmatrix} x_*' \\ u_* \end{bmatrix} \right \rangle
  = \left \langle  \begin{bmatrix} x' \\ u \end{bmatrix}, \, Q^* \begin{bmatrix} x_*' \\ u_* \end{bmatrix} \right\rangle \\
  & = \left\langle \begin{bmatrix} H^\half x \\ u \end{bmatrix}, Q^* \begin{bmatrix} H^{-\half} x_* \\ u_* \end{bmatrix}
  \right\rangle
 \end{align*}
 while on the other hand
 $$
 \left\langle \begin{bmatrix} A & B \\ C & D \end{bmatrix} \begin{bmatrix} x \\ u \end{bmatrix}, \, \begin{bmatrix} x_* \\ u_* \end{bmatrix}
 \right\rangle = \left\langle \begin{bmatrix} x \\ u \end{bmatrix}, \, \begin{bmatrix} A^* & C^* \\ B^* & D^* \end{bmatrix}
 \begin{bmatrix} x_* \\ u_* \end{bmatrix} \right\rangle.
 $$
 We thus conclude that
 $$
 \left\langle \begin{bmatrix} H^\half & 0 \\ 0 & I \end{bmatrix}
 \begin{bmatrix} x \\ u \end{bmatrix}, Q^* \begin{bmatrix} H^{-\half} x_* \\ u_* \end{bmatrix}
 \right\rangle  = \left\langle \begin{bmatrix} x \\ u \end{bmatrix},  \begin{bmatrix} A^* & C^* \\ B^* & D^* \end{bmatrix}
 \begin{bmatrix} x_* \\ u_* \end{bmatrix} \right\rangle
 $$
 for all $\begin{bmatrix} x \\ u \end{bmatrix}$ in $\cD \left( \begin{bmatrix} H^\half & 0 \\ 0 & I \end{bmatrix} \right)$.
 Hence
 \begin{equation}   \label{Q*1}
 Q^* \begin{bmatrix} H^{-\half} x_* \\ u_* \end{bmatrix} \in \cD\left( \begin{bmatrix} H^\half & 0 \\ 0 & I \end{bmatrix}^*\right)
= \cD\left( \begin{bmatrix} H^\half & 0 \\ 0 & I \end{bmatrix}\right)
\end{equation}
and
\begin{equation}  \label{Q*2}
  \begin{bmatrix} H^\half & 0 \\ 0 & I \end{bmatrix} Q^* \begin{bmatrix} H^{-\half} x_* \\ u_* \end{bmatrix} =
  \begin{bmatrix} A^* & C^* \\ B^* & D^* \end{bmatrix} \begin{bmatrix} x_* \\ u_* \end{bmatrix}
\end{equation}
where $x_* \in \cD(H^{-\half})$ and $u_* \in \cY$ are arbitrary.  From the formula \eqref{Q*2} we see that
$A^* \colon \cD(H^{-\half}) \to \im H^\half = \cD(H^{-\half})$ and that $C^* \colon \cY \to \im H^\half = \cD(H^{-\half})$, i.e.,
condition \eqref{KYP1b'*} holds with $H_* = H^{-1}$.  Let us now rewrite equation \eqref{Q*2} in the form
$$
Q^* \begin{bmatrix} H^{-\half} x_* \\ u_* \end{bmatrix} = \begin{bmatrix} H^{-\half} & 0 \\ 0 & I \end{bmatrix} \begin{bmatrix}
A^* & C^* \\ B^* & D^* \end{bmatrix} \begin{bmatrix} x_* \\ y_* \end{bmatrix}.
$$
Using that $Q^*$ is a contraction operator now gives us the spatial KYP-inequality \eqref{KYP1b*} with $H_* = H^{-1}$.  This completes
the proof of Proposition \ref{P:KYPduality}.
 \end{proof}

We next pursue the dual versions of the results of Section \ref{S:ASRS} concerning the {\em available storage} and
{\em required supply} as well as the $\ell^2$-regularized required supply.

First of all let us note that the Laurent operator $\frakL_{F_{\Si^*}}$ of $F_{\Si^*}$, i.e., the inverse $Z$-transform version of the multiplication operator $M_{F_{\Sigma^*}}  = (M_{F_\Sigma})^*$, is just the adjoint of the Laurent operator $\fL_{F_\Sigma}$ given by \eqref{Laurent0}.
We can rewrite $\frakL_{F_{\Si^*}}$ in the convenient block form
\begin{equation}  \label{Laurent*}
\fL_{F_{\Sigma^*}} = \left[ \begin{array}{c|c}
    \fT_{F_{\Sigma^*}} & \fH_{F_{\Sigma^*}}  \\
    \hline
    0 & \widetilde \fT_{F_{\Sigma^*}} \end{array} \right]  =
   \left[  \begin{array}{c|c} (\widetilde \fT_{F_\Sigma})^* & ( \fH_{F_\Sigma})^* \\
    \hline
 0 & ( \fT_{F_\Sigma})^* \end{array} \right]
   \end{equation}
where the Toeplitz operators associated with the adjoint system $\Sigma^*$ are given by
\begin{align*}
& \fT_{F_{\Sigma^*}}   = (\fL_{F_\Sigma})^*|_{\ell^2_\cY({\mathbb Z}_-)}  = (\widetilde \fT_{F_\Sigma})^*, \\
& \widetilde \fT_{F_{\Sigma^*}} = P_{\ell^2_\cU({\mathbb Z}_+)} ( \fL_{F_\Sigma})^*|_{\ell^2_\cY({\mathbb Z}_+)}
= ( \fT_{F_\Sigma})^*
\end{align*}
and where the Hankel operator for the adjoint system (already introduced as the inverse $Z$-transform version
of the frequency-domain Hankel operator ${\mathbb H}_{F_{\Sigma^*}}$ given by \eqref{Hankel-identification}) has the explicit
representation in terms of the Laurent operator $\fL_{F_{\Sigma^*}}  = (\fL_{F_\Sigma})^*$:
$$
  \fH_{F_{\Sigma^*}} = P_{\ell^2_\cU({\mathbb Z}_-)} ( \fL_{F_\Sigma})^*|_{\ell^2_\cU({\mathbb Z}_+)}.
$$

Let $\bcU_*$ be the space of all functions $n \mapsto \bu_*(n)$ from the integers ${\mathbb Z}$ into the input space $\cY$
for the adjoint system $\Sigma^*$.
We define the available storage for the adjoint system $S_{*a}$ by
\begin{equation}  \label{Sa2*}
S_{*a}(x_0) = \sup_{\bu \in \bcU_*, n_{-1} < 0 } \sum_{n=n_{-1}}^{n=-1} (\|\by_*(n) \|^2 - \| \bu_*(n) \|^2 )
\end{equation}
where the supremum is taken over all adjoint-system trajectories
$$
(\bu_*(n), \bx_*(n), \by_*(n) )_{n \le -1}
$$
(specified by the adjoint-system equations \eqref{dtsystem*} running in backwards time) with final condition
$\bx_*(-1) = x_0$.  Similarly, the dual required supply $S_{*r}$ is given by
\begin{equation}   \label{Sr2*}
S_{*r} (x_0) = \inf_{ \bu \in \bcU, \, n_1 \ge 0} \sum_{n=0}^{n_1} ( \| \bu_*(n) \|^2 - \| \by_*(n) \|^2)
\end{equation}
where the infimum is taken over system trajectories $(\bu_*(n), \bx_*(n), \by_*(n) )_{n \le n_1}$
subject to the boundary conditions $\bx_*(n_1) = 0$ and  $\bx(-1) = x_0$.  Then one applies the analysis
behind the proof of Proposition \ref{P:SaSr} to the backward-time system $\Sigma^*$ in place of the forward-time
system $\Sigma$ to see that $S_{*a}$ and $S_{*r}$ are both storage functions for $\Sigma^*$ and furthermore
$S_{*a}(x_0) \le S_*(x_0) \le S_{*r}(x_0)$, $x_0\in\cX$, for any other $\Sigma^*$-storage function $S_*$.  We shall however be primarily
interested in the $\ell^2$-regularized dual required supply $\uS_{*r}$, rather than in $S_{*r}$, defined by
\begin{equation}  \label{uSr2*}
\uS_{*r}(x_0) = \inf_{\bu \in \cD(\bW_{*c}) \colon \bW_{*c} \bu = x_0}
\sum_{n=0}^\infty \left(\| \bu_*(n) \|^2 - \| \by_*(n) \|^2 \right).
\end{equation}
Furthermore,  by working out the backward-time analogues of the analysis in Section \ref{S:ASRS}, one can see
that $\uS_{*r}$ is also a storage function for $\Sigma^*$, and that the definitions of $S_{*a}$ and $S_{*r}$ can be
reformulated in a more convenient operator-theoretic form:
\begin{align}
& S_{*a}(x_0)     =  \sup_{\bu_* \in \ell^2_\cY({\mathbb Z}_-)} \| \bW_{*o} x_0 + \fT_{F_{\Sigma^*}}
 \bu_*\|^2_{\ell^2_\cU({\mathbb Z}_-)}
- \| \bu_* \|^2_{\ell^2_\cY({\mathbb Z}_-)} \notag \\
&  =
\sup_{\bu_* \in \ell^2_\cY({\mathbb Z}_-)} \| \bW_{c}^* x_0 +  \widetilde \fT_{F_{\Sigma}}^*
 \bu_*\|^2_{\ell^2_\cU({\mathbb Z}_-)}
- \| \bu_* \|^2_{\ell^2_\cY({\mathbb Z}_-)} \text{ for } x_0 \in \cD(\bW_c^*)
\label{SaOpform*}
\end{align}
with $S_{*a}(x_0) = + \infty$ if $x_0 \notin \cD(\bW_c^*)$,
while
\begin{align}
& \uS_{*r}(x_0) =  \inf_{\bu_* \in \cD(\bW_{*c}) \colon \bW_{*c} \bu_* = x_0} \| \bu_* \|^2_{\ell^2_\cY({\mathbb Z}_+)} -
\| \widetilde \fT_{F_{\Sigma^*}} \bu_* \|^2_{\ell^2_\cU({\mathbb Z}_+)}  \notag \\
& = \inf_{\bu_* \in \cD(\bW_{o}^*) \colon \bW_{o}^* \bu_* = x_0} \| \bu_* \|^2_{\ell^2_\cY({\mathbb Z}_+)} -
\|  \fT_{F_{\Sigma}}^* \bu_* \|^2_{\ell^2_\cU({\mathbb Z}_+)}  \notag \\
& =  \inf_{\bu_* \in \cD(\bW_{o}^*), \, \bW_{o}^* \bu_* = x_0} \| D_{ \fT_{F_\Sigma}^*} \bu_* \|^2 .
\label{uSrOpForm*}
\end{align}

By notational adjustments to the arguments in the proof of Theorem \ref{T:Sar}, we arrive at the following
formulas for $S_{*a}$ and $\uS_{*r}$ on $\im \bW_o^*$.

\begin{theorem}  \label{T:Sar*}  Let the operators $\overline{X}_a$, $\overline{X}_r$ be as in Lemma \ref{L:fact}
and define operators $H_a$ and $H_r$ as in \eqref{Ha-def} and \eqref{Hr-def}.  Then the dual available storage
$S_{*a}$ and the dual $\ell^2$-regularized required supply are given  {\rm(}on a suitably restricted domain{\rm)} by
\begin{align}
&  S_{*a}(x_0) = \| \overline{X}_r x_0 \|^2 = \| H_r^{-\half} x_0 \|^2 \text{ for } x_0 \in \im \bW_o^*,  \label{form1*} \\
& \uS_{*r}(x_0) = \| | \overline{X}_a|^{-1} x_0 \|^2 = \| H_a^{- \half} x_0 \|^2 \text{ for } x_0 \in \im \bW_o^*.
\label{form2*}
\end{align}
\end{theorem}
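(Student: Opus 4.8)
The plan is to prove both formulas by duality, running the entire development of Section~\ref{S:ASRS}---Lemma~\ref{L:fact} and Theorem~\ref{T:Sar}---for the backward-time system $\Sigma^*$ in place of $\Sigma$. The first point to record is that the standing hypotheses \eqref{A} are self-dual: $\Sigma$ is minimal exactly when $\Sigma^*$ is minimal, since controllability of $(A,B)$ is observability of $(A^*,C^*)$ and conversely, and $F_{\Sigma^*}$ lies in the Schur class $\cS_{\BD_e}(\cY,\cU)$ exactly when $F_\Sigma\in\cS(\cU,\cY)$ by \eqref{transfunc*}. Hence Lemma~\ref{L:fact} and Theorem~\ref{T:Sar} apply verbatim to $\Sigma^*$, producing closable operators $X_{*a}$, $X_{*r}$ with closures $\oX_{*a}$, $\oX_{*r}$, associated positive-definite operators $H_{*a}=\oX_{*a}^*\oX_{*a}$ and $H_{*r}=(\oX_{*r}^*\oX_{*r})^{-1}$, and with $S_{*a}(x_0)=\|\oX_{*a}x_0\|^2$ and $\uS_{*r}(x_0)=\||\oX_{*r}|^{-1}x_0\|^2$ on $\im\bW_{*c}=\im\bW_o^*$.

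The key step is to identify these dual operators with the ones already built for $\Sigma$, with the roles of ``available storage'' and ``required supply'' interchanged. I would write out the dual instances of the factorizations \eqref{fact1}--\eqref{fact2} and substitute the identifications \eqref{identifications} together with the Toeplitz relations $\fT_{F_{\Sigma^*}}=\widetilde\fT_{F_\Sigma}^*$ and $\widetilde\fT_{F_{\Sigma^*}}=\fT_{F_\Sigma}^*$. The defining factorization of $X_{*a}$, namely $\bW_{*o}|_{\im\bW_{*c}}=D_{\fT_{F_{\Sigma^*}}^*}X_{*a}$ with $\im X_{*a}\subset(\kr D_{\fT_{F_{\Sigma^*}}^*})^\perp$, becomes $\bW_c^*|_{\im\bW_o^*}=D_{\widetilde\fT_{F_\Sigma}}X_{*a}$ with $\im X_{*a}\subset(\kr D_{\widetilde\fT_{F_\Sigma}})^\perp$; this is precisely the factorization \eqref{fact2} characterizing $X_r$, so by the uniqueness clause of Lemma~\ref{L:fact} we get $X_{*a}=X_r$ and hence $\oX_{*a}=\oX_r$. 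Symmetrically, the defining factorization of $X_{*r}$ reduces to $\bW_o|_{\im\bW_c}=D_{\fT_{F_\Sigma}^*}X_{*r}$ with $\im X_{*r}\subset(\kr D_{\fT_{F_\Sigma}^*})^\perp$, which is \eqref{fact1}, forcing $X_{*r}=X_a$ and $\oX_{*r}=\oX_a$.

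Once these identifications are in hand, the formulas follow by unwinding \eqref{Ha-def}--\eqref{Hr-def}. For the available storage, $S_{*a}(x_0)=\|\oX_{*a}x_0\|^2=\|\oX_r x_0\|^2$, and since $\oX_r=U_r|\oX_r|$ with $U_r$ an isometry and $H_r^\half=|\oX_r|^{-1}$, so that $H_r^{-\half}=|\oX_r|$, we obtain $\|\oX_r x_0\|^2=\||\oX_r|x_0\|^2=\|H_r^{-\half}x_0\|^2$, which is \eqref{form1*}. For the regularized required supply, $\uS_{*r}(x_0)=\||\oX_{*r}|^{-1}x_0\|^2=\||\oX_a|^{-1}x_0\|^2$, and since $H_a^\half=|\oX_a|$ gives $H_a^{-\half}=|\oX_a|^{-1}$, this equals $\|H_a^{-\half}x_0\|^2$, which is \eqref{form2*}. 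Equivalently, $H_{*a}=\oX_r^*\oX_r=H_r^{-1}$ and $H_{*r}=(\oX_a^*\oX_a)^{-1}=H_a^{-1}$, so the dual operators are exactly the inverses of the original ones.

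The main obstacle I anticipate is bookkeeping rather than conceptual: correctly tracking the time-reversal role swap, so that the available-storage construction for $\Sigma^*$ lands on the operator $X_r$ built for the \emph{required supply} of $\Sigma$ (and conversely), and checking that the target subspaces $(\kr D_{\widetilde\fT_{F_\Sigma}})^\perp$ and $(\kr D_{\fT_{F_\Sigma}^*})^\perp$ match, so that the uniqueness clauses of Lemma~\ref{L:fact} genuinely apply. One must also keep careful track of the inverses built into the definitions of $H_r$ and $H_{*r}$, since it is there that the minus signs in the exponents $H_r^{-\half}$ and $H_a^{-\half}$ of \eqref{form1*}--\eqref{form2*} originate. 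No genuinely new estimate is required; everything reduces to the contractivity of $\fL_{F_\Sigma}$ already exploited in Theorem~\ref{T:Sar}, now read off from the adjoint Laurent operator $\fL_{F_{\Sigma^*}}=(\fL_{F_\Sigma})^*$.
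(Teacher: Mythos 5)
Your proof is correct and follows the same route as the paper, which simply states that the formulas follow ``by notational adjustments to the arguments in the proof of Theorem \ref{T:Sar}'' applied to the backward-time system $\Sigma^*$. Your explicit identification $X_{*a}=X_r$ and $X_{*r}=X_a$ via the uniqueness clause of Lemma \ref{L:fact}, using $\fT_{F_{\Sigma^*}}^*=\widetilde\fT_{F_\Sigma}$ and $\widetilde\fT_{F_{\Sigma^*}}=\fT_{F_\Sigma}^*$ together with $\bW_{*o}=\bW_c^*$ and $\im\bW_{*c}=\im\bW_o^*$, is exactly the bookkeeping the paper leaves implicit, carried out correctly.
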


Let us associate extended-real-valued functions $S_{H_a}$, $S_{H_r}$, $S_{H_r^{-1}}$, $S_{H_a^{-1}}$ with the positive-definite
operators $H_a$, $H_r$, $H_r^{-1}$, $H_a^{-1}$ as in \eqref{QuadStorage1}.  Theorems \ref{T:Sar} and \ref{T:Sar*} give us
the close relationship between these functions and the functions $S_a$, $\uS_r$ (storage functions for $\Sigma$) and
$S_{*a}$, $\uS_{*r}$ (storage functions for $\Sigma^*$), namely:
\begin{align}
& S_a(x) = S_{H_a}(x), \quad \uS_{r}(x) = S_{H_r}(x) \text{ for } x \in \im \bW_c,  \notag \\
& S_{*a}(x) = S_{H_r^{-1}}(x), \quad \uS_{*r}(x) = S_{H_a^{-1}}(x) \text{ for } x \in \im \bW_o^*.
\label{storage-quadratic}
\end{align}
In general we do not assert that equality holds in any of the four equalities in \eqref{storage-quadratic} for all $x \in \cX$.
Nevertheless it is the case that $S_{H_a}$ and $S_{H_r}$ are storage functions for $\Sigma$ and $S_{H_r^{-1}}$ and
$S_{H_a^{-1}}$ are storage functions for $\Sigma^*$, as we now explain.

\begin{proposition}   \label{P:QuadStorageFuncs}  Let $H_a$, $H_r$, $H_r^{-1}$, $H_a^{-1}$ be the positive-definite
operators as in Theorems \ref{T:Sar} and \ref{T:Sar*}.  Then the following hold:
\begin{enumerate}
\item[(1)] $S_{H_a}$ and $S_{H_r}$ are nondegenerate storage functions for $\Sigma$, or equivalently, $H_a$ and $H_r$ are
positive-definite solutions of the generalized KYP-inequality \eqref{KYP1b'}--\eqref{KYP1b} for $\Sigma$.

\item[(2)] $S_{H_r^{-1}}$ and $S_{H_a^{-1}}$ are storage functions for $\Sigma^*$, or equivalently, $H_r^{-1}$
and $H_a^{-1}$ are positive-definite solutions of the generalized KYP-inequality \eqref{KYP1b'*}--\eqref{KYP1b*} for $\Sigma^*$.
\end{enumerate}
\end{proposition}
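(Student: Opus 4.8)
The plan is to prove the operator reformulation in each part and invoke the stated equivalences. By Proposition \ref{P:QuadStorage} it suffices, for part (1), to show that $H_a$ and $H_r$ are positive-definite solutions of the generalized KYP-inequality \eqref{KYP1b'}--\eqref{KYP1b} for $\Sigma$, and by Proposition \ref{P:QuadStorage*}, for part (2), that $H_r^{-1}$ and $H_a^{-1}$ are positive-definite solutions of \eqref{KYP1b'*}--\eqref{KYP1b*} for $\Sigma^*$. Positive-definiteness (hence injectivity, hence the nondegeneracy asserted in (1)) is already available, since $\oX_a,\oX_r$ are injective and $\langle H_a x,x\rangle=\|\oX_a x\|^2$, $\langle H_r^{-1}x,x\rangle=\|\oX_r x\|^2$. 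Crucially, by the duality Proposition \ref{P:KYPduality} the four KYP-assertions collapse to two: $H_a$ solves \eqref{KYP1b'}--\eqref{KYP1b} if and only if $H_a^{-1}$ solves \eqref{KYP1b'*}--\eqref{KYP1b*}, and $H_r^{-1}$ solves \eqref{KYP1b'*}--\eqref{KYP1b*} if and only if $H_r$ solves \eqref{KYP1b'}--\eqref{KYP1b}. Thus I only need to establish that $H_a$ solves the generalized KYP-inequality for $\Sigma$ and that $H_r^{-1}$ solves the generalized KYP-inequality for $\Sigma^*$; the remaining two cases follow from Proposition \ref{P:KYPduality}.

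For $H_a$ I would proceed as follows. The reachable subspace is invariant under the dynamics, $A\,\im\bW_c+B\cU\subset\im\bW_c$, by Proposition \ref{P:Wc}(2), and $S_a$ is a storage function for $\Sigma$ (Proposition \ref{P:SaSr}) that agrees with $S_{H_a}=\|H_a^\half\,\cdot\,\|^2$ on $\im\bW_c$ (Theorem \ref{T:Sar}). Feeding a trajectory initialized at $x_0\in\im\bW_c$ with $\bu(0)=u$ into the dissipation inequality \eqref{disineq}, and using that $\bx(1)=Ax_0+Bu$ again lies in $\im\bW_c$, yields, exactly as in the proof of Proposition \ref{P:QuadStorage}, the spatial inequality
\begin{equation}\label{e:planKYP}
\|H_a^\half(Ax_0+Bu)\|^2+\|Cx_0+Du\|^2\le\|H_a^\half x_0\|^2+\|u\|^2
\end{equation}
for all $x_0\in\im\bW_c$ and $u\in\cU$. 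To upgrade \eqref{e:planKYP} to all $x_0\in\cD(H_a^\half)$ I would invoke that $\im\bW_c$ is a core for $H_a^\half$ (Theorem \ref{T:Sar}): choose $x_k\in\im\bW_c$ with $x_k\to x_0$ and $H_a^\half x_k\to H_a^\half x_0$. The right-hand side of \eqref{e:planKYP} then converges, so $\{H_a^\half(Ax_k+Bu)\}_k$ is bounded; passing to a weakly convergent subsequence and using that $H_a^\half$ is selfadjoint, hence has weakly closed graph, identifies the weak limit as $H_a^\half(Ax_0+Bu)$. This gives both $Ax_0+Bu\in\cD(H_a^\half)$ (so \eqref{KYP1b'} holds, taking $u=0$ for $A$ and $x_0=0$ for $B$) and, by weak lower semicontinuity of the norm, the inequality \eqref{e:planKYP} at $x_0$, which is \eqref{KYP1b}.

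The argument for $H_r^{-1}$ is the verbatim dual, carried out for the backward-time system $\Sigma^*$: here $S_{*a}$ is a storage function for $\Sigma^*$, the subspace $\im\bW_o^*=\im\bW_{*c}$ is invariant under the $\Sigma^*$-dynamics (dual of Proposition \ref{P:Wc}(2)), $S_{*a}$ agrees with $S_{H_r^{-1}}=\|H_r^{-\half}\,\cdot\,\|^2$ on $\im\bW_o^*$ (Theorem \ref{T:Sar*}), and $\im\bW_o^*$ is a core for $H_r^{-\half}=|\oX_r|=(H_r^{-1})^\half$ (Theorem \ref{T:Sar}); the identifications \eqref{identifications} translate the $\Sigma$-argument into the $\Sigma^*$-argument line for line. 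This shows $H_r^{-1}$ solves \eqref{KYP1b'*}--\eqref{KYP1b*}. Applying Proposition \ref{P:KYPduality} to $H=H_a$ and to $H=H_r$ then produces the two missing assertions, $H_a^{-1}$ solving \eqref{KYP1b'*}--\eqref{KYP1b*} and $H_r$ solving \eqref{KYP1b'}--\eqref{KYP1b}, completing both parts.

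I expect the main obstacle to be precisely the passage from the core $\im\bW_c$ to the full domain $\cD(H_a^\half)$: one must guarantee that $Ax_0+Bu$ returns to $\cD(H_a^\half)$ and that the dissipation inequality survives the limit, even though only weak convergence of $\{H_a^\half(Ax_k+Bu)\}_k$ is a priori available. This is exactly where selfadjointness (closedness) of $H_a^\half$, weak compactness of norm-bounded sets, and weak lower semicontinuity of the norm enter; the strong convergence $Ax_k+Bu\to Ax_0+Bu$, coming from boundedness of $A$ and $B$, is what allows the closed-graph property to pin down the weak limit. Everything else is either a direct citation of the core and agreement statements in Theorems \ref{T:Sar} and \ref{T:Sar*}, the invariance in Proposition \ref{P:Wc}, or a bookkeeping use of the duality in Proposition \ref{P:KYPduality}.
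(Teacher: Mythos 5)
Your proposal is correct and follows essentially the same route as the paper: verify the spatial inequality on $\im \bW_c$ (resp.\ $\im \bW_o^*$) using the invariance of that subspace and the agreement of $S_a$ with $S_{H_a}$ there, extend to all of $\cD(H_a^{\half})$ via the core property, and obtain the remaining two assertions from Proposition \ref{P:KYPduality}. The only difference is that you spell out the limit passage (weak compactness of the bounded sequence $H_a^{\half}(Ax_k+Bu)$, weak closedness of the graph, and weak lower semicontinuity of the norm) more explicitly than the paper, which simply says ``take limits''; this is a welcome clarification rather than a deviation.
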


\begin{proof}
The fact that $S_H$ is a nondegenerate storage function for $\Sigma$ (respectively $\Sigma^*$) if and only if
$H$ is a positive-definite solution of the generalized KYP-inequality for $\Sigma$ (respectively $\Sigma^*$) is a consequence of
Proposition \ref{P:QuadStorage} and its dual Proposition \ref{P:QuadStorage*}.  We shall use these formulations interchangeably.

We know that $S_{H_a}(x) = S_a(x)$ for $x \in \im \bW_c$.  Furthermore as a consequence of \eqref{intertwine1} with
$\widetilde u = 0$ and of
\eqref{Wc-fin} with $n_{-1} = -1$, we see that $\im \bW_c$ is invariant under $A$ and  contains $\im B$.
Thus condition \eqref{KYP1b'} holds
with $\im \bW_c$ in place of $\cD(H^\half)$.  The facts that $S_{H_a}$ agrees with $S_a$ on $\im \bW_c$ and that
$S_a$ is a storage function for $\Sigma$ implies that the inequality \eqref{KYP1b} holds for $x \in \im \bW_c$ and $u \in \cU$:
\begin{equation}\label{KYP1b-Wc}
\left\| \begin{bmatrix} H_a^{\half} \! & \! 0 \\ 0 \! & \! I_{\cU}
\end{bmatrix} \begin{bmatrix} x \\ u \end{bmatrix} \right\|^{2}
- \left\| \begin{bmatrix} H_a^{\half} \! & \! 0 \\ 0 \! & \! I_{\cY}
\end{bmatrix} \begin{bmatrix} A\! & \! B \\ C \! & \! D \end{bmatrix}
\begin{bmatrix} x \\ u \end{bmatrix} \right\|^{2} \ge 0.
\end{equation}
As noted at the end of Theorem \ref{T:Sar}, $\im \bW_c$ is a core for $H_a^\half$; hence, given $x \in \cD(H_a)$, there is a sequence
of points $\{ x_n\}_{n \ge 1}$ contained in $\im \bW_c$ such that $\lim_{n \to \infty} x_n = x$ and $\lim_{n \to \infty} H^\half x_n =
H^\half x$.  As each $x_n \in \im \bW_c$, we know that the inequality \eqref{KYP1b-Wc} holds with $x_n$ in place of $x$ for all
$n = 1,2,\dots$.  We may now take limits in this inequality to see that the inequality continues to hold with $x = \lim_{n \to \infty}
x_n \in \cD(H_a^\half)$, i.e., condition \eqref{KYP1b} holds with $H_a$ in place of $H$.  Thus $H$ is a solution of the
generalized KYP-inequality for $\Sigma$.  That $H_r^{-1}$ is a solution of the generalized KYP-inequality for $\Sigma^*$
now follows by applying the same analysis to $\Sigma^*$ rather than to $\Sigma$.
Finally, the fact that $H_a$ (respectively, $H_r^{-1}$) is a positive-definite solution of the generalized KYP-inequality for
$\Sigma$ (respectively for $\Sigma^*$) implies that $H_a^{-1}$ (respectively, $H_r$) is a positive-definite solution of the
generalized KYP-inequality for $\Sigma^*$ (respectively, $\Sigma$)
as a consequence of Proposition \ref{P:KYPduality}.
\end{proof}

\section{Order properties of solutions of the generalized KYP-inequality and finer results for special cases}  \label{S:order}

We have implicitly been using an order relation on storage functions, namely:  we say that $S_1 \le S_2$ if $S_1(x_0)
\le S_2(x_0)$ for all $x_0 \in \cX$.  For the case of quadratic storage functions $S_{H_1}$ and $S_{H_2}$ where $H_1$ and $H_2$ are two positive-semidefinite solutions of the generalized KYP-inequality \eqref{KYP1b'}--\eqref{KYP1b}, the induced ordering $\le$ on
positive-semidefinite (possibly unbounded) operators can be
 defined as follows:  given two positive-semidefinite operators $H_1$ with dense domain $\cD(H_1)$ and $H_2$ with dense
 domain $\cD(H_2)$ in $\cX$,  we say that {\em   $H_1 \le H_2$ if $\cD(H_2^\half) \subset \cD(H_1^\half)$ and }
 \begin{equation}  \label{H1leH2}
   \| H^\half_1 x \|^2 \le \| H^\half_2 x \|^2 \text{ for all } x \in \cD(H_2^\half).
 \end{equation}
In case $H_1$ and $H_2$ are bounded positive-semidefinite operators, one can see that $H_1 \le H_2$ is equivalent to
$H_1 \preceq H_2$ in the sense of the inequality between quadratic forms:  $\langle H_1 x, x \rangle \le \langle H_2 x, x \rangle$,
i.e., in the Loewner partial order:  $H_2 - H_1 \succeq 0$.  This ordering $\le$ on (possibly unbounded) positive-semidefinite operators has appeared in the more general context of closed quadratic forms $S_H$ (not necessarily storage functions for some dissipative
system $\Sigma$) and associated semibounded selfadjoint operators $H$ (not necessarily solving some generalized KYP-inequality);
see formula (2.17) and the subsequent remark in the book of Kato \cite{Kato}.  This order has been studied in the setting of
solutions of a generalized KYP-inequality in the paper of Arov-Kaashoek-Pik \cite{AKP06}.  Here we
offer a few additional such  order properties which follow from the results developed here.
Recall that the notion of a {\em core} of a closed, densely defined linear operator was introduced in the paragraph preceding
Theorem \ref{T:Sar}.

\begin{theorem}  \label{T:order}
 Assume that the system $\Sigma$ in \eqref{dtsystem} satisfies the standing
assumption \eqref{A} and $H_a$ and $H_r$ are defined by \eqref{Ha-def} and \eqref{Hr-def}.
Let $H$ be any positive-definite solution of the generalized KYP-inequality
\eqref{KYP1b'}--\eqref{KYP1b}.
\begin{enumerate}
\item[(1)]  Assume that $\im \bW_c$ is a core for $H^\half$.  Then  we have the operator inequality
\begin{equation}  \label{HaH-ineq}
  H_a \le H
\end{equation}
and furthermore $\im \bW_o^* \subset \cD(H^{-\half})$.

\item[(2)]  Assume that $\im \bW_o^*$ is a core for $H^{-\half}$.  Then we have the operator inequality
\begin{equation}   \label{HHr-ineq}
  H \le H_r
\end{equation}
and furthermore $\im \bW_c \subset \cD(H^\half)$.
\end{enumerate}
\end{theorem}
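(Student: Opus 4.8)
The plan is to prove part (1) directly and then obtain part (2) by applying part (1) to the adjoint system $\Sigma^*$. For part (1) the key leverage is that the available storage $S_a$ is the \emph{smallest} storage function: since $H$ solves the generalized KYP-inequality, Proposition \ref{P:QuadStorage} shows $S_H$ is a storage function, and Proposition \ref{P:SaSr}(3) then gives $S_a(x_0)\le S_H(x_0)$ for every $x_0\in\cX$. By hypothesis $\im\bW_c$ is a core for $H^\half$, so in particular $\im\bW_c\subset\cD(H^\half)$ and $S_H(x_0)=\|H^\half x_0\|^2$ there; on $\im\bW_c$ we also have $S_a(x_0)=\|H_a^\half x_0\|^2$ by \eqref{form1} of Theorem \ref{T:Sar}. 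Combining these yields the form inequality
$$
\|H_a^\half x_0\|^2\le\|H^\half x_0\|^2\qquad(x_0\in\im\bW_c).
$$

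First I would upgrade this from $\im\bW_c$ to all of $\cD(H^\half)$, which is exactly $H_a\le H$ in the sense of \eqref{H1leH2}. Fix $x\in\cD(H^\half)$ and, using the core hypothesis, choose $x_n\in\im\bW_c$ with $x_n\to x$ and $H^\half x_n\to H^\half x$. Since $\im\bW_c$ is a linear manifold, $x_n-x_m\in\im\bW_c$, so the displayed bound gives $\|H_a^\half(x_n-x_m)\|\le\|H^\half(x_n-x_m)\|\to0$; hence $(H_a^\half x_n)$ is Cauchy, say $H_a^\half x_n\to w$. As $H_a^\half=|\oX_a|$ is self-adjoint and therefore closed, the convergences $x_n\to x$ and $H_a^\half x_n\to w$ force $x\in\cD(H_a^\half)$ and $H_a^\half x=w$, whence $\|H_a^\half x\|=\lim\|H_a^\half x_n\|\le\lim\|H^\half x_n\|=\|H^\half x\|$. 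For the second assertion of part (1) I would invoke the order-reversal of inversion for positive-definite self-adjoint operators (see \cite{Kato}): $H_a\le H$ is equivalent to $H^{-1}\le H_a^{-1}$, and in particular $\cD(H_a^{-\half})\subset\cD(H^{-\half})$. Since by \eqref{form2*} of Theorem \ref{T:Sar*} the dual $\ell^2$-regularized required supply satisfies $\uS_{*r}(x_0)=\|H_a^{-\half}x_0\|^2<\infty$ on $\im\bW_o^*$, we have $\im\bW_o^*\subset\cD(H_a^{-\half})$, and therefore $\im\bW_o^*\subset\cD(H^{-\half})$, as required.

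Part (2) I would reduce to part (1) by passing to $\Sigma^*$. By Proposition \ref{P:KYPduality}, $H$ is a positive-definite solution of the generalized KYP-inequality for $\Sigma$ if and only if $H^{-1}$ is one for $\Sigma^*$, and the hypothesis that $\im\bW_o^*$ is a core for $H^{-\half}=(H^{-1})^\half$ is precisely the part-(1) core hypothesis for $\Sigma^*$, once we use the identifications \eqref{identifications}. Under these, $\im\bW_o^*=\im\bW_{*c}$ plays the role of the reachable range for $\Sigma^*$, $\im\bW_c=\im\bW_{*o}^*$ the observable range, and (by \eqref{storage-quadratic} and Theorem \ref{T:Sar}) the operator $H_r^{-1}$ is the available-storage operator of $\Sigma^*$, with $\im\bW_o^*$ a core for $(H_r^{-1})^\half=H_r^{-\half}$. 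Applying part (1) to $\Sigma^*$ with solution $H^{-1}$ then yields $H_r^{-1}\le H^{-1}$, equivalently $H\le H_r$ by the same inversion order-reversal, together with $\im\bW_c=\im\bW_{*o}^*\subset\cD\big((H^{-1})^{-\half}\big)=\cD(H^\half)$; these are exactly the two conclusions of part (2).

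The main obstacle I anticipate is the propagation step from $\im\bW_c$ to $\cD(H^\half)$: it is here that the core hypothesis enters essentially, and the argument must combine the linearity of $\im\bW_c$, the pointwise bound $S_a\le S_H$, and the closedness of $H_a^\half$ in exactly the right order. A secondary delicacy is the clean use of order-reversal under inversion together with its accompanying domain inclusion $\cD(H_a^{-\half})\subset\cD(H^{-\half})$, which is what turns the inequality $H_a\le H$ into the domain statement about $\im\bW_o^*$; the remaining hazard is purely bookkeeping, namely tracking which core and which range correspond to which operator under the substitutions \eqref{identifications} in the duality step.
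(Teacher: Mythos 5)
Your proof is correct and, in its overall architecture, is the same as the paper's: part (1) is obtained by combining $S_a\le S_H$ (minimality of the available storage) with the identification $S_a=\|H_a^\half\cdot\|^2$ on $\im\bW_c$, then propagating the form inequality from $\im\bW_c$ to all of $\cD(H^\half)$ via the core hypothesis, a Cauchy-sequence argument, and closedness of $H_a^\half$; part (2) is obtained by applying part (1) to $\Sigma^*$ with $H^{-1}$ in place of $H$ (via Proposition \ref{P:KYPduality}) and then reversing the order under inversion. The one place you genuinely diverge is the secondary claim $\im\bW_o^*\subset\cD(H^{-\half})$ in part (1): you get it from $\cD(H_a^{-\half})\subset\cD(H^{-\half})$ (inversion order-reversal applied to $H_a\le H$) together with the fact, implicit in \eqref{form2*} of Theorem \ref{T:Sar*}, that $\im\bW_o^*\subset\cD(H_a^{-\half})=\im|\oX_a|$; the paper instead uses the lower bound $\|\bW_o x_0\|^2\le S_a(x_0)$ from Corollary \ref{C:boundedSauSr} to build the chain $\bW_o^*\bW_o\le H_a\le H$, inverts the whole chain using Proposition 3.4 of \cite{AKP05}, and identifies $\cD(|\bW_o|^{-1})=\im|\bW_o|=\im\bW_o^*$ via the polar decomposition of $\bW_o$. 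Both routes are legitimate given what has been established by that point in the paper; yours is marginally shorter but leans on the domain content packaged into Theorem \ref{T:Sar*}, whereas the paper's is self-contained at the level of the operators $\bW_o$, $H_a$, $H$ and makes the intermediate inequality $\bW_o^*\bW_o\le H_a$ explicit (which is also reused elsewhere, e.g.\ in Proposition \ref{P:ell2minImplsHaHr}). One small stylistic point in your favor: you correctly invoke closedness of $H_a^\half=|\oX_a|$ (self-adjoint, hence closed) in the limiting argument, which is the precise statement needed there.
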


\begin{proof}
We deal with (1) and (2) in turn.

{(1)}
Suppose  that $H$ is a positive-definite solution of the generalized KYP-inequality such that
$\im \bW_c$ is a core for $H^\half$.
 From Theorem \ref{T:Sar}, we know that $S_a(x) = \| H_a^\half x\|$ for $x \in \im \bW_c$.  Since $S_a$ is the
smallest storage function
(see Proposition \ref{P:SaSr}) and $S_H$ is a storage function, it follows that
\begin{equation}   \label{Ha-Hineq}
\| H_a^\half x \|^2   = S_a(x) \le S_H(x) = \| H^\half x \|^2 \text{ for } x \in \im \bW_c.
\end{equation}
Let now $x$ be an arbitrary point of $\cD(H^\half)$.
Since $\im \bW_c$ is a core for $H^\half$, we can find a sequence $\{x_n \}_{n \ge 1}$ of points
in $\im \bW_c$ such that $x_n \to x$ and $H^\half x_n \to H^\half x$.  In particular $H^\half x_n$ is a Cauchy sequence
and the inequality
\begin{align*}
 & \| H_a^\half x_n - H_a^\half x_m \|^2 = \| H_a ^\half(x_n - x_m ) \|^2   \\
 & \quad \quad \le  \| H^\half(x_n - x_m ) \|^2  =
  \| H^\half x_n -  H^\half x_m \|^2
\end{align*}
implies that $\{ H_a^\half x_n \}_{n \ge 1}$ is Cauchy as well, so converges to some $y \in \cX$. As $H_a$ is closed,
we get that $x \in \cD(H_a)$ and $y = H_a^\half x$.  We may then take limits in the inequality $\|H_a^\half x_n \|^2 \le \| H^\half x_n \|^2$ holding for all
$n$ (a consequence of \eqref{Ha-Hineq}) to conclude that $\| H_a^\half x \|^2 \le \| H^\half x \|^2$, i.e., $H_a \le H$, i.e.,
 \eqref{HaH-ineq} holds.

Recall next from Corollary \ref{C:boundedSauSr} that
$\| \bW_o x_0 \|^2 \le S_a(x_0)$, where we now also know from Theorem \ref{T:Sar} that
$S_a(x_0) = \| H^\half x_0 \|^2$ for $x_0 \in \im \bW_c$.  We thus have the chain of operator inequalities
$$
  \bW_o^* \bW_o \le H_a \le H.
$$
By Proposition 3.4 in \cite{AKP05}, we may equivalently write
$$
  H^{-1} \le H_a^{-1} \le (\bW_o^* \bW_o)^{-1}.
$$
In particular $\cD( | \bW_o |^{-1}) \subset \cD(H^{- \half})$.  If we introduce the polar decomposition
$\bW_o = U_o | \bW_o |$ for $\bW_o$, we see  that $\bW_o^* = | \bW_o | U_o^*$ and hence
$\im \bW_o^* = \im | \bW_o|$.  Thus
$$
 \cD( | \bW_o |^{-1}) = \im | \bW_o| = \im \bW_o^*
 $$
 and it follows that $\im \bW_o^* \subset \cD(H^{-\half})$ and the verification of (1) is complete.

{(2)}  We now suppose that $H$ is a positive-definite solution of the generalized KYP-inequality such that
$\im \bW_o^*$ is a core for $H^{-\half}$.  By the applying the result of part (1) to the adjoint system $\Sigma^*$, we see
that $H_r^{-1} \le H^{-1}$ and that $\im \bW_c \subset H^\half$.  If we apply the result of Proposition 3.4 in \cite{AKP05},
we see that $H_r^{-1} \le H^{-1}$ implies that (is actually equivalent to) $H \le H_r$, completing the verification of (2).
\end{proof}

\begin{remark}  \label{R:ineq-chain} By the last assertion in Theorem \ref{T:Sar}, we know that  $\im \bW_c$ is a core for
$H_a^\half$ and that $\im \bW_o^*$ is a core for $H_r^{-\half}$.  Also by Proposition \ref{P:QuadStorageFuncs} we know
that $H_a$ and $H_r$ are positive-definite solutions of the generalized KYP-inequality for $\Sigma$.
Thus item (1) in Theorem \ref{T:order} may be rephrased as follows:

\begin{itemize}
\item
{\sl The set ${\mathcal GS}_c$ consisting of all positive-definite solutions $H$ of the generalized KYP-inequality \eqref{KYP1b'}--\eqref{KYP1b}  for $\Sigma$
such that $\im \bW_c$ is a core  for $H^\half$ has the solution $H_a$ as a minimal element
with respect to the ordering $\le$.}
\end{itemize}

\noindent
 Similarly item (2) in Theorem \ref{T:order} may be rephrased as:

 \begin{itemize}
 \item {\em The set ${\mathcal GS}_o$ consisting of all positive-definite solutions $H$ of the generalized KYP-inequality \eqref{KYP1b'}--\eqref{KYP1b} such that $\im \bW_o^*$ is a core for  $H^{-\half}$  has the solution $H_r$ as a maximal element with respect to the ordering $\le$.}
 \end{itemize}

\noindent
 It would be tempting to say:
  \begin{itemize}
 \item {\em The set ${\mathcal GS}$ consisting of all positive-definite solutions $H$ of the generalized KYP-inequality \eqref{KYP1b'}--\eqref{KYP1b}
 such that $\im \bW_c$  a core for $H^\half$ and $\im \bW_o^*$ is a core for $H^{-\half}$ has $H_a$ as a minimal element and $H_r$
 as a maximal element with respect to the ordering $\le$.}
 \end{itemize}
 However, while the above results imply that $\im \bW_c \subset \cD(H_r^{\half})$ and that
 $\im \bW_o^* \subset \cD(H_a^{-\half})$,
   we have not been able to show in general that $\im \bW_c$ is a core for $H_r^\half$ or that $\im \bW_o^*$ is a core for
 $H_a^{-\half}$.  Such a more satisfying symmetric statement does hold in the pseudo-similarity framework for the analysis of
 solutions of generalized KYP-inequalities (see Proposition 5.8 in \cite{AKP06}).
\end{remark}

We now consider the case that $\Si$ is not only controllable and/or observable, but has the stronger $\ell^2$-exact controllability or
$\ell^2$-exact observability condition, or both, i.e., $\ell^2$-exact minimality. We first consider the implications on $H_a$ and $H_r$.

\begin{proposition}\label{P:ell2minImplsHaHr}
Let $\Sigma$ be a system as in \eqref{dtsystem} such that
assumption \eqref{A} holds.
\begin{itemize}
\item[(1)] If $\Sigma$ is $\ell^2$-exactly controllable, then $H_a$ and $H_r$ are bounded.

\item[(2)] If $\Sigma$ is $\ell^2$-exactly observable, then $H_a$ and $H_r$ are boundedly invertible.

\item[(3)] $\Sigma$ is $\ell^2$-exactly minimal, i.e., both
$\ell^2$-exactly controllable and $\ell^2$-exactly observable,
then $H_a$ and $H_r$ are both bounded and boundedly invertible.
\end{itemize}
\end{proposition}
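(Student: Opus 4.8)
The plan is to prove (1) by a direct argument, then obtain (2) by applying (1) to the adjoint system $\Si^*$, and finally read off (3) as the conjunction of the two. The tools I would use are the identities $S_a(x_0)=\|H_a^\half x_0\|^2$ and $\uS_r(x_0)=\|H_r^\half x_0\|^2$ for $x_0\in\im\bW_c$ from Theorem \ref{T:Sar}, the inequality chain \eqref{ineqs} of Corollary \ref{C:boundedSauSr}, and the dual formulas of Theorem \ref{T:Sar*} together with \eqref{storage-quadratic}.

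For (1), assume $\Si$ is $\ell^2$-exactly controllable, so that $\im\bW_c=\cX$. Since $\bW_c$ is closed with range all of $\cX$ (hence closed range), the restriction of $\bW_c$ to $(\kr\bW_c)^\perp\cap\cD(\bW_c)$ is a closed bijection onto $\cX$; by the closed graph theorem its inverse, which is exactly the Moore-Penrose inverse $\bW_c^\dagger$ of \eqref{MP}, is bounded. I would then fix $x_0\in\cX=\im\bW_c$ and take $\bu_-=\bW_c^\dagger x_0$ in \eqref{ineqs}, obtaining
\[
\|H_a^\half x_0\|^2=S_a(x_0)\le\uS_r(x_0)\le\|\bW_c^\dagger x_0\|^2\le\|\bW_c^\dagger\|^2\,\|x_0\|^2 .
\]
As $\im\bW_c=\cX$, this shows $H_a^\half$ and $H_r^\half$ are everywhere defined and bounded, whence $H_a$ and $H_r$ are bounded. (For $H_a$ one may instead note directly that $\oX_a=X_a=Y_a\bW_c^\dagger$ is bounded, so $H_a=\oX_a^*\oX_a$ is bounded.)

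For (2), I would invoke the fact from Section \ref{S:dual} that $\ell^2$-exact observability of $\Si$ is the same as $\ell^2$-exact controllability of $\Si^*$, and that $\Si^*$ again satisfies the standing assumption \eqref{A}. Applying part (1) to $\Si^*$ then yields that the adjoint available-storage and regularized-required-supply operators $H_a^{\Si^*}$ and $H_r^{\Si^*}$ are bounded. Now \eqref{storage-quadratic} identifies $S_{*a}$ with $S_{H_r^{-1}}$ and $\uS_{*r}$ with $S_{H_a^{-1}}$ on $\im\bW_o^*$; since $\im\bW_o^*$ is simultaneously a core for $(H_a^{\Si^*})^\half$ and for $H_r^{-\half}$ (Theorems \ref{T:Sar}, \ref{T:Sar*}), a routine limiting argument upgrades these norm agreements to the operator identities $H_a^{\Si^*}=H_r^{-1}$ and $H_r^{\Si^*}=H_a^{-1}$. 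Hence $H_r^{-1}$ and $H_a^{-1}$ are bounded, i.e.\ $H_a$ and $H_r$ are boundedly invertible. Part (3) is then immediate from (1) and (2).

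The step I expect to be the main obstacle is the bounded invertibility of $H_r$ in (2). The tempting direct route would feed the lower bound $\|\bW_o x_0\|^2\le\uS_r(x_0)=\|H_r^\half x_0\|^2$ on $\im\bW_c$ together with the coercivity of $\bW_o$ (which $\ell^2$-exact observability provides, equivalently $\im\bW_o^*=\cX$), and then extend $\|H_r^\half x_0\|^2\ge c^2\|x_0\|^2$ from $\im\bW_c$ to all of $\cD(H_r^\half)$ by taking limits. This argument goes through verbatim for $H_a$ because $\im\bW_c$ is a core for $H_a^\half$, but it fails for $H_r$ precisely because we do not know $\im\bW_c$ to be a core for $H_r^\half$ (cf.\ Remark \ref{R:ineq-chain}). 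Passing to $\Si^*$, where the relevant set $\im\bW_o^*$ is a core for $H_r^{-\half}$, is what resolves the difficulty.
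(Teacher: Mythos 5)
Your items (1) and (3), and the overall duality strategy for (2), are sound and close in spirit to the paper's proof: the paper obtains (1) from the inclusion $\im \bW_c = \cX \subset \cD(H_a^\half)\cap\cD(H_r^\half)$ together with the Closed Graph Theorem (your explicit bound $\|H_r^\half x_0\|^2 \le \|\bW_c^\dagger\|^2\|x_0\|^2$ via \eqref{ineqs} is a quantitative version of the same fact), and (3) is the conjunction of (1) and (2). For (2) the paper is more direct than you are: by Theorem \ref{T:Sar}, $\im \bW_o^*$ is a core for $H_r^{-\half}$, hence $\im \bW_o^* \subset \cD(H_r^{-\half})$; and since $\im \bW_c$ is a core for $H_a^\half$, Theorem \ref{T:order}(1) applied with $H=H_a$ gives $\im \bW_o^* \subset \cD(H_a^{-\half})$. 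With $\im \bW_o^* = \cX$ the Closed Graph Theorem then finishes, with no need to identify the adjoint-system operators.

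The one step of yours that does not go through as written is the ``routine limiting argument'' for the second operator identity $H_r^{\Sigma^*} = H_a^{-1}$. For that identity the common core you would need is a core for $(H_r^{\Sigma^*})^\half$ and for $H_a^{-\half}$ simultaneously; what Theorems \ref{T:Sar} and \ref{T:Sar*} actually supply is that $\im \bW_o^*$ is a core for $(H_a^{\Sigma^*})^\half$ and for $H_r^{-\half}$ --- exactly what is needed for the \emph{first} identity $H_a^{\Sigma^*} = H_r^{-1}$, but not for the second. Indeed, whether $\im \bW_o^*$ is a core for $H_a^{-\half}$ is precisely the point the paper states it cannot settle in general (Remark \ref{R:ineq-chain}). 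The identity $H_r^{\Sigma^*} = H_a^{-1}$ is nevertheless true, and the clean way to get it is from the uniqueness clause of Lemma \ref{L:fact}: writing out the defining factorization \eqref{fact2} for $\Sigma^*$ gives $\bW_o|_{\im \bW_c} = D_{\fT_{F_\Sigma}^*} X_r^{\Sigma^*}$ with range in $(\kr D_{\fT_{F_\Sigma}^*})^\perp$, which is exactly the defining property \eqref{fact1} of $X_a$, so $X_r^{\Sigma^*} = X_a$ and, dually, $X_a^{\Sigma^*} = X_r$; then \eqref{Ha-def}--\eqref{Hr-def} yield $H_r^{\Sigma^*} = (\oX_a^*\oX_a)^{-1} = H_a^{-1}$ and $H_a^{\Sigma^*} = \oX_r^*\oX_r = H_r^{-1}$. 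Alternatively you can bypass the second identity altogether: your own closing observation (the lower bound $\|H_a^\half x_0\|^2 = S_a(x_0) \ge \|\bW_o x_0\|^2 \ge c^2\|x_0\|^2$ on the core $\im \bW_c$, using that surjectivity of $\bW_o^*$ makes $\bW_o$ bounded below) already gives bounded invertibility of $H_a$, and the first identity handles $H_r$.
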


\begin{proof}
We discuss each of (1), (2), (3) in turn.

{(1)} Item (1) follows directly from the fact that $\im \bW_c$ is contained in both $\cD(H_a)$ and $\cD(H_r)$ together with the
Closed Graph Theorem.

{(2)}  From the last assertion in Theorem \ref{T:Sar}, we know that $\im \bW_c$ is a core for $H_a^\half$.  Then
item (1) in Theorem \ref{T:order} implies that $\im \bW_o^* \subset \cD(H_a^{-\half})$.  If $\im
\bW_o^* = \cX$, the Closed Graph Theorem then gives us that $H_a^{-\half}$ is bounded.

Also part of the last assertion of Theorem \ref{T:Sar} is the statement that $\bW_o^*$ is a core for $H_r^{-\half}$, so
in particular $\im \bW_o^* \subset \cD(H_r^{-\half})$.  Then again the Closed Graph Theorem implies that $H_r^{-\half}$
is bounded.

{(3)}.  Simply combine the results of items (1) and (2).
\end{proof}

Next we consider general positive-definite solutions to the generalized KYP-inequality.

\begin{proposition}\label{P:ell2minImplsH}
Suppose that $\Sigma$ is a system as in \eqref{dtsystem} such that assumption \eqref{A} holds and that $H$ is any
positive-definite solution of the generalized KYP-inequality.
\begin{itemize}
\item[(1)]   Suppose that $\Sigma$ is $\ell^2$-exactly controllable and that $\im \bW_c \subset \cD(H^\half)$
{\rm(}as is the case  e.g.\ if $\im \bW_o^*$ is a core for $H^{-\half}${\rm)}.
Then $H$ is bounded and furthermore
$$
        H_a \le H.
$$

\item[(2)] Suppose that $\Sigma$ is $\ell^2$-exactly observable and that $\im \bW_o^*  \subset \cD(H^{-\half})$
{\rm(}as is the case e.g.\ if $\im \bW_c$ is a core for $H^\half${\rm)}.
Then $H^{-1}$ is bounded and furthermore
$$
   H \le H_r.
$$

\item[(3)] Suppose that $\Sigma$ is both $\ell^2$-exactly controllable and $\ell^2$-exactly observable and
that either {\rm(a)} $\im \bW_c \subset \cD(H^\half)$ or {\rm(b)} $\im \bW_o^* \subset \cD(H^{-\half})$.  Then $H$
is bounded and boundedly invertible and we have the inequality chain
\begin{equation}  \label{HaHrineq}
   H_a \le H \le H_r.
\end{equation}
\end{itemize}
\end{proposition}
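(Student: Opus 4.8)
The plan is to reduce all three items to the order result Theorem~\ref{T:order} together with the Closed Graph Theorem, using the two characterizations $\im\bW_c=\cX$ (for $\ell^2$-exact controllability) and $\im\bW_o^*=\cX$ (for $\ell^2$-exact observability), treating item~(2) as the dual of item~(1) and item~(3) as a bootstrap that combines the two.

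For item~(1), I would first note that $\ell^2$-exact controllability gives $\im\bW_c=\cX$, so the standing inclusion $\im\bW_c\subset\cD(H^\half)$ forces $\cD(H^\half)=\cX$. Since $H$ is closed and positive-definite, $H^\half$ is selfadjoint, hence closed, and being everywhere defined it is bounded by the Closed Graph Theorem; consequently $H=H^\half H^\half$ is bounded. A bounded operator has its entire domain as a core, so $\im\bW_c=\cX$ is trivially a core for $H^\half$, and Theorem~\ref{T:order}(1) then yields $H_a\le H$. The parenthetical alternative hypothesis needs no separate argument: if $\im\bW_o^*$ is a core for $H^{-\half}$, the second conclusion of Theorem~\ref{T:order}(2) already supplies $\im\bW_c\subset\cD(H^\half)$.

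For item~(2), I would pass to the adjoint system via Section~\ref{S:dual}. By Proposition~\ref{P:KYPduality}, $H^{-1}$ solves the generalized KYP-inequality for $\Sigma^*$; by the identifications \eqref{identifications} the controllability operator of $\Sigma^*$ is $\bW_o^*$, and $\ell^2$-exact observability of $\Sigma$ is $\ell^2$-exact controllability of $\Sigma^*$. The hypothesis $\im\bW_o^*\subset\cD(H^{-\half})$ is thus exactly the standing inclusion required to apply item~(1) to $\Sigma^*$, which returns that $H^{-1}$ is bounded and that the available-storage operator of $\Sigma^*$ is dominated by $H^{-1}$. Since by \eqref{storage-quadratic} that operator is $H_r^{-1}$, this reads $H_r^{-1}\le H^{-1}$, and Proposition~3.4 in \cite{AKP05} converts it into the equivalent inequality $H\le H_r$.

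For item~(3), I would bootstrap. Under alternative~(a) item~(1) gives that $H$ is bounded and $H_a\le H$; crucially, because $\im\bW_c=\cX$ is then a core for $H^\half$, the \emph{second} conclusion of Theorem~\ref{T:order}(1) also delivers $\im\bW_o^*\subset\cD(H^{-\half})$. Feeding this inclusion, together with $\ell^2$-exact observability, into item~(2) produces that $H^{-1}$ is bounded and $H\le H_r$, giving the chain \eqref{HaHrineq} and the bounded invertibility of $H$. Alternative~(b) is handled symmetrically, starting from item~(2) and recovering $\im\bW_c\subset\cD(H^\half)$ from the second conclusion of Theorem~\ref{T:order}(2) before invoking item~(1). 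I expect the bootstrap to be the only genuinely delicate point: under hypothesis~(a) the inclusion $\im\bW_o^*\subset\cD(H^{-\half})$ needed even to begin item~(2) is not assumed, and one must verify that once item~(1) has made $H^\half$ bounded with full domain, the core requirement of Theorem~\ref{T:order}(1) is automatically met, so that its domain conclusion can be harvested.
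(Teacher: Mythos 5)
Your proof is correct and follows essentially the same route as the paper: reduce to Theorem~\ref{T:order} plus the Closed Graph Theorem, observe that $\im \bW_c = \cD(H^{\half}) = \cX$ is automatically a core, and bootstrap between (1) and (2) for item (3) exactly as the paper does. The only cosmetic difference is that for item (2) you re-derive the duality argument (passing to $\Sigma^*$, identifying $H_r^{-1}$ as its available-storage operator, and invoking Proposition~3.4 of \cite{AKP05}) rather than citing Theorem~\ref{T:order}(2) directly, but that theorem is itself proved in precisely this way, so the content is identical.
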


\begin{proof}
First note that the fact that the parenthetical hypotheses in items (1) and (2) are stronger than the given
hypotheses is a consequence of the final assertions in parts (1) and (2) of Theorem \ref{T:order}.  We now deal with the rest of (1), (2), (3).

{(1)} If we assume that $\cX = \im \bW_c \subset \cD(H^\half)$, then $H^\half$ (and hence also $H$) is bounded by
the Closed Graph Theorem.  Moreover,  as $\im \bW_c = \cD(H^\half)$, in particular $\im \bW_c$ is a core for $H^\half$
and the inequality $H_a \le H$ follows from Theorem \ref{T:order} (1).

{(2)} Similarly, if we assume $\cX = \im \bW_o^* \subset \cD(H^{-\half})$, then $H^{-\half}$ is bounded by the Closed Graph Theorem.  As $\im \bW_o^* = \cD(H^{-\half})$, in particular $\im \bW_o^*$ is a core for $H^{-\half}$ and $H \le H_r$ follows as a consequence of Theorem \ref{T:order} (2).

{(3)}  If $\cX = \im \bW_c \subset \cD(H^\half)$, then in fact $\im \bW_c = \cD(H^\half)$ so $\im \bW_c$ is a core for
$H^\half$.  By Theorem \ref{T:order}, it follows that $\im \bW_o^* \subset \cD(H^{-\half})$ and hence hypothesis (b) is a consequence
of hypothesis (a) when combined with all the other hypotheses in (3).  Similarly hypothesis (a) is a consequence of
hypothesis (b).  Hence there is no loss of generality in assuming that both (a) and (b) hold.  Then the verification of (3)
is completed by simply combining the results of (1) and (2).
\end{proof}

\section{Proofs of Bounded Real Lemmas}  \label{S:BRLproof}

We now put all the pieces together to give a storage-function proof of Theorem~\ref{T:BRLinfstan}.

\begin{proof}[Proof of Theorem \ref{T:BRLinfstan}]
We are given a minimal system $\Sigma$ as in \eqref{dtsystem} with transfer function $F_\Sigma$ in the Schur class
$\cS(\cU, \cY)$.

\smallskip

\noindent
{\em Proof of sufficiency.} For the sufficiency direction, we assume either that there exists a positive-definite solution $H$
of the generalized KYP-inequality \eqref{KYP1b'}--\eqref{KYP1b} (statement (1)) or a bounded and boundedly invertible solution
$H$ of the KYP-inequality \eqref{KYP1} (statements (2) and (3)).   As the latter case is a particular version of the former case,
it suffices to assume that we have a positive-definite solution of the generalized KYP-inequality \eqref{KYP1b'}--\eqref{KYP1b}.
We are to show that then $F_\Sigma$ is in the Schur class $\cS(\cU, \cY)$.

Given such a generalized solution of the KYP-inequality,  Proposition \ref{P:QuadStorage} guarantees us
that $S_H$ is an (even quadratic) storage function for $\Sigma$.  Then $F_\Sigma$ has analytic continuation to a
Schur class function by Proposition \ref{P:storage-Schur}.

\smallskip

\noindent
{\em Proof of necessity in statement (1):}  We assume that $\Sigma$ is minimal and that $F_\Sigma$ has
analytic continuation to a Schur-class function, i.e., assumption \eqref{A} holds.  Then Proposition \ref{P:QuadStorageFuncs}
gives us two choices $H_a$ and $H_r$ of positive-definite solutions of the generalized KYP-inequality
\eqref{KYP1b'}--\eqref{KYP1b}.

\smallskip

\noindent
{\em Proof of necessity in statement (2):}  We assume that $\Sigma$ is exactly controllable and exactly observable
with transfer function $F_\Sigma$ having analytic continuation to the Schur class.  From Proposition
\ref{P:HankelDecs} (1)  we see that $\im \bW_c \supset \Rea(A|B) = \cX$ and that $\cD(\bW_o) \supset \Rea (A|B) = \cX$
while from item (2) in the same proposition we see that
$\im \bW_o^* \supset \Obs(C|A) = \cX$ and that  $\cD(\bW_c^*) \supset \Obs(C|A) = \cX$.  Hence by the Closed Graph
Theorem, in fact $\bW_c$ and $\bW_o^*$ are bounded in addition to being surjective.  In particular
$\Sigma$ is $\ell^2$-exactly controllable and $\ell^2$-exactly observable, so this case actually falls under
item (3) of  Theorem \ref{T:BRLinfstan}, which we will prove next.

\smallskip

\noindent
{\em Proof of necessity in statement (3):}  We now assume that $\Sigma$ is $\ell^2$-exactly controllable and $\ell^2$-exactly
observable with $F_\Sigma$ having analytic continuation to a function in the Schur class $\cS(\cU, \cY)$
and we want to produce a bounded and boundedly invertible solution $H$ of the KYP-inequality
\eqref{KYP1}.  In particular, $\Sigma$ is minimal (controllable and observable),  so Proposition \ref{P:QuadStorageFuncs}
gives us two solutions $H_a$ and $H_r$ of the generalized KYP-inequality.
But any  solution $H$ of the generalized KYP-inequality becomes a solution of the standard KYP-inequality
\eqref{KYP1} if it happens to be the case that $H$ is bounded.  By the result of item (3) in
Proposition \ref{P:ell2minImplsHaHr}, both $H_a$ and $H_r$ are bounded and boundedly invertible under our
$\ell^2$-minimality assumptions.  Thus in this case  $H_a$ and $H_r$ serve as two choices for bounded, strictly positive-definite
solutions of the KYP-inequality, as needed.
\end{proof}

We are now ready also for a storage-function proof of Theorem \ref{T:BRLinfstrict}.

\begin{proof}[Proof of Theorem \ref{T:BRLinfstrict}] The standing assumption for both directions is that $\Sigma$ is a
linear system as in \eqref{dtsystem} with exponentially stable state operator $A$.

\smallskip

\noindent
{\em Proof of necessity:}  Assume that there exists a bounded strictly positive-definite solution $H$ of the
strict KYP-inequality.  By Proposition \ref{P:strictQuadStorage}, $S_H$ is a strict
storage function for $\Sigma$.  Then by Proposition \ref{P:strictstorage-Schur}, $F_\Sigma$ has analytic continuation to an
$\cL(\cU, \cY)$-valued $H^\infty$-function with $H^\infty$-norm strictly less than 1 as wanted.  The fact that
$A$ is exponentially stable implies that $F_\Sigma$ has analytic continuation to a slightly larger disk
beyond ${\mathbb D}$, and the fact that $H$ is strictly positive-definite implies that $S_H$ has the additional
coercivity property $S_H(x) \ge \epsilon_0 \| x \|^2$ for some $\epsilon_0 > 0$.

\smallskip

\noindent
{\em Proof of sufficiency:}
We are assuming that  $\Sigma$ has state operator $A$
exponentially stable and with transfer function $F_\Sigma$ in the strict Schur class.
The exponential stability of $A$ (i.e.  $A$ has spectral radius $r_{\rm spec}(A) < 1$) means that
the series
$$
    \bW_{o}^{*}\by = \sum_{k=0}^{\infty} A^{*k} C^{*}\by(k)\ \
(\by\in\ell^2_\cY(\BZ_+)), \quad
    \bW_{c}\bu = \sum_{k=0}^{\infty} A^{k} B\bu(k)\ \
(\bu\in\ell^2_\cU(\BZ_-))
$$
are norm-convergent (not just in the weak sense as in Proposition \ref{P:WcWo'}), and hence $\bW_c$ and $\bW_o$ are bounded.
However it need not be the case that $\bW_c$ or $\bW_o^*$ be surjective, so we are not in a position to
apply part (3) of Theorem \ref{T:BRLinfstan} to the system $\Sigma$.
The adjustment for handling this difficulty which also ultimately produces bounded and boundedly invertible solutions
of the strict KYP-inequality \eqref{KYP2} is what we shall call {\em $\epsilon$-regularization reduction}. It goes back at least to Petersen-Anderson-Jonkheere \cite{PAJ} for the finite dimensional case, and was extended to the infinite dimensional case in our previous paper \cite{KYP1}. We recall the procedure here for completeness and because we refer to it in a subsequent remark.

Since $\spec (A) < 1$, the resolvent expression $(I - \lambda A)^{-1}$ is uniformly bounded for all $\lambda$ in the unit disk
${\mathbb D}$.  Since we are now assuming that $F_\Sigma$ is
in the strict Schur class, it follows that we can choose $\epsilon >0$ sufficiently small so that the
augmented matrix function
\begin{equation}   \label{Fepsilon}
F_{\epsilon}(\lambda) : = \begin{bmatrix} F(\lambda) & \epsilon \lambda C
(I - \lambda A)^{-1} \\ \epsilon \lambda (I -  \lambda A)^{-1} B & \epsilon^{2} \lambda
(I - \lambda A)^{-1} \\ \epsilon I_{\cU} & 0 \end{bmatrix}
\end{equation}
is in the strict Schur class $\cS^{o}( \cU \oplus \cX, \cY \oplus \cX
\oplus \cU)$. Note that
\[
 F_{\epsilon}(\lambda) = \begin{bmatrix} D & 0 \\ 0 & 0 \\ \epsilon I_{\cU}
 & 0 \end{bmatrix} + \lambda  \begin{bmatrix}  C \\ \epsilon I_{\cX} \\ 0
 \end{bmatrix} (I - \lambda A)^{-1} \begin{bmatrix} B & \epsilon I_{\cX}
 \end{bmatrix}
\]
and hence
\begin{equation}   \label{breal}
 M_{\epsilon} =  \begin{bmatrix}  \bA & \bB \\ \bC & \bD
\end{bmatrix} : =
  \mat{c|cc}{
    A &  B & \epsilon I_{\cX}\\
    \hline C &  D & 0 \\
    \epsilon I_{\cX} & 0 & 0 \\
    0  & \epsilon I_{\cU} & 0}
\end{equation}
is a realization for $ F_{\epsilon}(\lambda)$.   Suppose that we can find a bounded and boundedly invertible
positive-definite operator $H$ satisfying the KYP-inequality \eqref{KYP1} associated with the system
$\Sigma_\epsilon$:
\begin{equation}   \label{KYP-epsilon}
   \begin{bmatrix} \bA^{*} & \bC^{*} \\ \bB^{*} & \bD^{*}
\end{bmatrix}
       \begin{bmatrix} H & 0 \\ 0 & I_{\cY \oplus \cX \oplus \cU}
       \end{bmatrix} \begin{bmatrix} \bA & \bB \\ \bC & \bD
   \end{bmatrix} \preceq \begin{bmatrix} H & 0 \\ 0 & I_{\cU \oplus
   \cX} \end{bmatrix}.
\end{equation}
Spelling this out gives
$$
\begin{bmatrix} A^{*}HA + C^{*}C + \epsilon^{2}I_{\cX} & A^{*}H B +
    C^{*}D & \epsilon A^{*}H \\
 B^{*}HA + D^{*}C & B^{*} H B + D^{*} D + \epsilon^{2}I_{\cU} &
 \epsilon B^{*} H \\ \epsilon HA & \epsilon HB & \epsilon^{2} H
\end{bmatrix} \preceq \begin{bmatrix} H & 0 & 0 \\ 0 & I_{\cU} & 0 \\
0 & 0 & I_{\cX} \end{bmatrix}.
$$
By crossing off the third row and third column, we get the inequality
$$
\begin{bmatrix} A^{*} H A + C^{*} C + \epsilon^{2} I_{\cX} & A^{*}H B
    + C^{*} D \\ B^{*}H A + D^{*} C & B^{*} H B + D^{*} D +
    \epsilon^{2} I_{\cU} \end{bmatrix} \preceq \begin{bmatrix} H & 0
    \\ 0 & I_{\cU} \end{bmatrix}
$$
or
$$
\begin{bmatrix} A^{*} & C^{*} \\ B^{*} & D^{*} \end{bmatrix}
    \begin{bmatrix} H & 0 \\ 0 & I_{\cY} \end{bmatrix}
	\begin{bmatrix} A & B \\ C & D \end{bmatrix} + \epsilon^{2}
	    \begin{bmatrix} I_{\cX} & 0 \\ 0 & I_{\cU} \end{bmatrix}
		\preceq \begin{bmatrix} H & 0 \\ 0 & I_{\cU}
	    \end{bmatrix}
$$
leading us to the strict KYP-inequality \eqref{KYP2} for the original system $\Sigma$ as wanted.

It remains only to see why there is a bounded and boundedly invertible solution $H$ of \eqref{KYP-epsilon}.
It is easily checked that the system $\Sigma_\epsilon$ is exactly controllable and exactly minimal, since $\bB$
and $\bC^*$ are both already surjective; as observed in the proof of necessity in item (2) of
Theorem \ref{T:BRLinfstan},  since $F_{\Sigma_\epsilon}$ is in the Schur class it then follows that $\Sigma_\epsilon$
is $\ell^2$-exactly controllable and $\ell^2$-exactly observable as well.  Hence we can appeal to
either items (2) or (3) of Theorem \ref{T:BRLinfstan} to conclude that indeed the KYP-inequality
\eqref{KYP-epsilon} has a bounded and boundedly invertible positive-definite solution.  This is what is done in
\cite{KYP1}, where the State-Space-Similarity approach is used to prove items (2) and (3) in Theorem
\ref{T:BRLinfstan} rather the storage-function approach as is done here.
 \end{proof}

  \begin{remark} \label{R:HaHr-notbounded}
 Let $\Si$ and $F_\Si$ satisfy the conditions
 of strict Bounded Real Lemma (Theorem \ref{T:BRLinfstrict}). Define
 the $\ep$-augmented system $\Si_\ep$ as in \eqref{breal}. We then obtain bounded, strictly positive-definite
 solutions $H_{a,\ep}$ and $H_{r,\ep}$ of the strict KYP
inequality \eqref{KYP2}, and consequently, by Proposition \ref{P:ell2minImplsH} (3) all bounded or
bounded below solutions $H$ to the generalized KYP inequality \eqref{KYP1b'}--\eqref{KYP1b} for
$\Sigma_{\epsilon}$ satisfy $H_{a,\ep} \le H \le H_{r,\ep}$ and hence are in fact bounded,
strictly positive-definite solutions to the  KYP inequality \eqref{KYP1} for the
original system $\Sigma$.  An application of Theorem \ref{T:order} together with the observation that $\cD$ being
a core for the bounded operator $X$ on $\cX$ is the same as $\cD$ being dense in $\cX$ leads to the conclusion that
the operators $H_a$ and $H_r$ associated with the original system satisfy $H_a \le H_{a, \epsilon}$ and
$H_r^{-1} \le H_{r, \epsilon}^{-1}$ and hence are bounded.
However, this by itself is not enough to conclude that $H_a$ and $H_r^{-1}$ are also bounded below.
\end{remark}

\paragraph
{\bf Acknowledgements}
This work is based on the research supported in part by the National Research Foundation of South Africa.
Any opinion, finding and conclusion or recommendation expressed in this material is that of the authors and the
NRF does not accept any liability in this regard.

It is a pleasure to thank Olof Staffans for enlightening discussion (both verbal and written)
and Mikael Kurula for his useful comments while
visiting the first author in July 2017.


\begin{thebibliography}{99}

\bibitem{AV}
\textsc{B.D.O.\ Anderson, S.\ Vongpanitlerd}, \textit{Network
Analysis and Synthesis:  A Modern Systems Theory Approach},
Prentice-Hall, Englewood Cliffs, 1973.

\bibitem{Arov79a}
\textsc{D.Z.\ Arov}, Scattering theory with
dissipation of
energy,  \textit{Dokl.\ Akad.\ Nauk SSSR}, \textbf{216}(1974),
713--716
[Russian]; English (with addenda): \textit{Sov.\ Math.\ Dokl.},
\textbf{15}(1974), 149--162.

\bibitem{Arov79}
\textsc{D.Z.\ Arov}, Stable dissipative linear
stationary dynamical scattering systems, \textit{J.\ Operator Theory},
\textbf{1}(1979), 95--126 [Russian]; English translation
in:\  \textit{Interpolation Theory, Systems Theory and Related
Topics:\ The
Harry Dym Anniversary Volume}, pp.\ 99--136, Oper.\ Theory Adv.\ Appl.\   {\bf 134},
Birkh\"auser, Basel, 2002.

\bibitem{AKP97}
\textsc{D.Z. Arov, M.A. Kaashoek, D.R. Pik}, Minimal
and optimal linear discrete time-invariant dissipative scattering
systems, \textit{Integr.\ Equ.\ Oper.\ Theory}, \textbf{29}(1997),
127--154.

\bibitem{AKP05}
\textsc{D.Z. Arov, M.A. Kaashoek, D.R. Pik}, Minimal representations
of a contractive operator as a
product of two bounded operators, \textit{Acta Sci.\ Math.\ (Szeged)},
\textbf{71}(2005), 313--336.

\bibitem{AKP06}
\textsc{D.Z. Arov, M.A. Kaashoek, D.R. Pik}, The
Kalman-Yakubovich-Popov
inequality for discrete time
systems of infinite dimension,  \textit{J.\ Operator Theory}, {\bf
55}(2006),
393--438.

\bibitem{AKP16}
\textsc{D.Z. Arov, M.A. Kaashoek, D.R. Pik}, Generalized solutions of
Riccati
equalities and inequalities,  \textit{Methods Funct.\ Anal.\
Topology}, {\bf
22}(2016), 95--116.


\bibitem{AS}
\textsc{D.Z.\ Arov, O.J.\ Staffans},\! The
infinite-dimensional continuous time Kalman- Yakubovich-Popov
inequality,  in:\   \textit{The Extended Field of Operator Theory},
pp.\
37--72,  Oper.\ Theory Adv.\ Appl.\   {\bf 171}, Birkh\"auser, Basel,
2007.

\bibitem{BC}
\textsc{J.A.\ Ball, N.\ Cohen}, De Branges-Rovnyak
operator models and systems theory: a survey, in:\  \textit{Topics in
Matrix and
Operator Theory}, pp.\ 93--136,  Oper.\ Theory
Adv.\ Appl.\  \textbf{50}, Birkh\"auser, Basel, 1991.

\bibitem{BGM3}
\textsc{J.A.\ Ball, G.\ Groenewald, T.\ Malakorn},
Bounded real lemma for structured noncommutative multidimensional
linear
systems and robust control,  \textit{Multidimens.\ Syst.\ Signal
Process.}, \textbf{17}(2006), 119--150.

\bibitem{KYP1}
\textsc{J.A.\ Ball, G.\ Groenewald, S.\ ter Horst},
Standard versus strict bounded real lemma with infinite-dimensional state space I:
The state-space-similarity approach, to appear.

\bibitem{Bel}
\textsc{V.\ Belevitch},
\textit{Classical Network Theory}, Holden-Day, San Francisco, Calif.-Cambridge-Amsterdam,
1968.


\bibitem{BAGK95}
\textsc{A. Ben-Artzi, I. Gohberg, M.A. Kaashoek}, Discrete
nonstationary
bound\-ed real lemma in
indefinite metrics, the strict contractive case, in:
\textit{Operator Theory
and Boundary Eigenvalue Problems (Vienna, 1993)}, pp.\ 49--78,  Oper.\
Theory
Adv.\ Appl.\  \textbf{80}, Birkh\"auser, Basel, 1995.


\bibitem{CO16}
\textsc{A.N. Chakhchoukh, M.R. Opmeer}, The state space isomorphism
theorem for discrete-time
infinite-dimensional systems,  \textit{Integr.\ Equ.\ Oper.\ Theory},
{\bf 84}(2016),
105--120.


\bibitem{DL}
\textsc{G.E.\ Dullerud, S.\ Lall}, A new approach for
analysis and synthesis of time-varying systems,  \textit{IEEE Trans.\
Automat.\
Control}, \textbf{44}(1999), 1486--1497.

\bibitem{Douglas}
\textsc{R.G.\ Douglas}, On majorization, factorization,
and range inclusion of operators in Hilbert space,  \textit{Proc.\
Amer.\
Math.\ Soc.}, \textbf{17}(1966), 413--415.

\bibitem{OT100}
\textsc{C.\ Foias, A.E.\ Frazho, I.\ Gohberg, M.A.\
Kaashoek}, \textit{ Metric
Constrained Interpolation, Commutant Lifting and Systems}, Oper.\
Theory
Adv.\ Appl.\  \textbf{100}, Birkh\"auser, Basel. 1998.

\bibitem{HI}
\textsc{A.\ Halanay, V.\ Ionescu}, \textit{Time-Varying Discrete
Linear Systems}, Birkh\"auser Verlag, Basel, 1994.

\bibitem{Helton74}
\textsc{J.W.\ Helton}, Discrete time systems,
operator models, and scattering theory,  \textit{J.\ Functional
Analysis},
\textbf{16}(1974), 15--38.

\bibitem{Kato} T.\ Kato, \textit{Perturbation Theory for Linear Operators},
Grundlehren Math.\ Wiss. \textbf{132}, Springer Verlag, Berlin-Heidelberg, 1980.


\bibitem{NF}
\textsc{B.\ Sz.-Nagy, C.\ Foias, H. Bercovici, L. K\'erchy},
\textit{Harmonic Analysis of Operators on Hilbert Space.
Revised and enlarged edition}, Universitext, Springer, New York, 2010.

\bibitem{OpmeerStaffans2008}
\textsc{M.R.\ Opmeer, O.J.\ Staffans},
Optimal state feedback input-output stabilization of
infinite-dimensional discrete time-invariant linear systems,
\textit{Compl.\
Anal.\ Oper.\ Theory}, \textbf{2} (2008), 479--510.

\bibitem{OpmeerStaffans2010}
\textsc{M.R.\ Opmeer, O.J.\ Staffans},
Optimal input-output stabilization of infinite-dimensional discrete
time-invariant linear systems by output injection,  \textit{SIAM J.\
Control
Optim.}, \textbf{48}(2010), 5084--5107.

\bibitem{PAJ}
\textsc{I.R.\ Petersen, B.D.O.\ Anderson, E.A.\ Jonckheere},
A first principles solution to the non-singular $H^{\infty}$
control problem,  \textit{Internat.\ J.\ Robust Nonlinear Control},
\textbf{1}(1991),
171--185.

\bibitem{Rantzer}
\textsc{A.\ Rantzer}, On the Kalman-Yakubovich-Popov
lemma, \textit{ Systems \& Control Letters}, \textbf{28}(1996), 7--10.

\bibitem{RS}
\textsc{M.\ Reed, B.\ Simon}, \textit{Methods of Mathematical
Physics I: Functional Analysis} Academic Press, San Diego, 1980.


\bibitem{Staffans1998a}
\textsc{O.J.\ Staffans}, Quadratic optimal
control of well-posed linear systems,  \textit{SIAM J.\ Control
Optim.},
\textbf{37}(1998), 131--164.

\bibitem{Wil72a}
\textsc{J.C.\ Willems}, Dissipative dynamical systems
Part I:  General theory,  \textit{Arch.\ Rational Mech.\ Anal.},
\textbf{45}(1972), 321--351.

\bibitem{Wil72b}
\textsc{J.C.\ Willems}, Dissipative dynamical systems
Part II:  Linear systems with quadratic supply rates,  \textit{Arch.\
Rational Mech.\ Anal.}, \textbf{45}(1972), 352--393.

\bibitem{Yak74}
\textsc{V.A.\ Yakubovich}, The frequency theorem for the case in
which the state space and the control space are Hilbert spaces, and
its application in certain problems in the synthesis of optimal
control. I,  \textit{Sibirsk.\ Mat. \v{Z}.}, \textbf{15}(1974),
639--668 [Russian];
translation in:\  \textit{Sib.\ Math.\ J.}, \textbf{15}(1974),
457--476 (1975).

\bibitem{Yak75}
\textsc{V.A.\ Yakubovich},
The frequency theorem for the case in which the state space and the
control space are Hilbert spaces, and its application in certain
problems in the synthesis of optimal control. II,  \textit{Sibirsk.\
Mat. \v{Z}.}, \textbf{16}(1975), 1081--1102 [Russian]; translation
in:\  \textit{Sib.\ Math.\ J.} \textbf{16} (1974), 828--845 (1976).

\end{thebibliography}
\end{document}